\newtheorem{theorem}{Theorem}
\newtheorem{corollary}[theorem]{Corollary}
\newtheorem{lemma}[theorem]{Lemma}
\newtheorem{proposition}[theorem]{Proposition}
\newtheorem{example}[theorem]{Example}
\newtheorem{remark}[theorem]{Remark}
\newtheorem{remarks}[theorem]{Remarks}
\newcommand{\R}{\mathbb{R}}
\begin{document}
\title{Ribaucour partial tubes and hypersurfaces of Enneper type}

\maketitle
%}
\begin{center}
\author{Sergio Chion  
       \footnote{This research is a result of the activity developed within the framework of the Programme in Support of Excellence Groups of the Regi\'on de Murcia, Spain, by Fundaci\'on S\'eneca, Science and Technology Agency of the Regi\'on de Murcia. Sergio Chion was partially supported by Fundaci\'on S\'eneca project 19901/GERM/15, Spain.}
\and
        Ruy Tojeiro
        \footnote{The second author is partially 
supported by Fapesp grant 2016/23746-6 and 
CNPq grant 303002/2017-4.}}
\end{center}
\date{}

\begin{abstract}
In this article we introduce the notion of a Ribaucour partial tube
and use it to derive several applications. These are based on a characterization of  Ribaucour partial tubes as the immersions of a product of two manifolds into a space form such that the distributions given by the tangent spaces of the factors are orthogonal to each other  with respect to the induced metric, are invariant under all shape operators, and one of them is spherical. Our first application is a classification of all hypersurfaces with dimension at least three of a space form that carry a spherical foliation of codimension one, extending previous results by Dajczer, Rovenski and the second author for the totally geodesic case. We proceed to prove a general decomposition theorem for immersions of product manifolds, which extends several related results. Other main applications concern the class of hypersurfaces of $\R^{n+1}$ that are of  Enneper type, that is, hypersurfaces that
 carry a family of lines of curvature, correspondent to a simple principal curvature, whose orthogonal $(n-1)$-dimensional distribution is integrable and whose leaves are contained in hyperspheres or affine hyperplanes of $\mathbb{R}^{n+1}$. 
We show how Ribaucour partial tubes in the sphere can be used to describe all $n$-dimensional hypersurfaces of Enneper type for which the leaves of the 
$(n-1)$-dimensional distribution are contained in affine hyperplanes of $\mathbb{R}^{n+1}$,
and then show  how a general hypersurface of Enneper type can be constructed in terms of a hypersurface in the latter class. We give an explicit description of some special  hypersurfaces of Enneper type, among which are natural generalizations of the so called Joachimsthal surfaces.
\end{abstract}

\noindent \emph{2020 Mathematics Subject Classification:} 53 B25.\vspace{2ex}

\noindent \emph{Key words and phrases:} {\small {\em Ribaucour partial tube,  spherical foliation, \\
 hypersurface of Enneper-type, conformal polar metric.}}

\section{Introduction}

In  \cite{dt3} and \cite{drt}, the authors studied the problem of determining the  
hypersurfaces  of dimension at least three of a space form that carry  
a totally geodesic foliation of codimension one. The initial motivation of this 
work was to investigate the similar problem that one can pose by assuming the 
foliation to be spherical instead of being totally geodesic.  
A foliation being spherical means that each leaf is an umbilical submanifold 
whose mean curvature vector field is parallel with respect to its normal connection. 

 In the solution of the problem addressed in \cite{dt3} and  \cite{drt},
one main example of a hypersurface of a space form that carries 
a totally geodesic foliation of codimension one is a partial tube over a 
smooth regular curve. Partial tubes are submanifolds that are generated by 
starting with any submanifold whose normal bundle has a parallel and flat subbundle,  
taking a submanifold in the fiber of that subbundle at a given point, and then 
parallel transporting it along the former submanifold with respect to its normal connection. They are characterized by the property that the tangent spaces
to the submanifold that is parallel transported along the 
starting submanifold give rise to a totally geodesic distribution that is invariant under
all of its shape operators.

  In this paper we introduce a class of submanifolds that play the role of 
partial tubes in our context, and which are the basis for some of our main results.
They are immersions of product manifolds with two factors that contain partial 
tubes as special cases and whose construction is based on the extension of 
the Ribaucour transformation for submanifolds developed in \cite{dt} and \cite{dt2}, 
so we call them \emph{Ribaucour partial tubes} (see Section $3$ for the precise definition). They extend the notion of an $\mathcal{N}$-Ribaucour transform
defined in \cite{dft}.

It is a basic result of this paper that Ribaucour partial tubes are precisely  the immersions of product manifolds with two factors such that the distributions given by the tangent spaces of the factors are orthogonal to each other with respect to the induced metric, are invariant under all shape operators, and one of them is spherical 
(Theorem \ref{rpt}). This immediately implies that any submanifold of 
a space form that carries a spherical distribution that is invariant under all shape 
operators and whose orthogonal distribution is integrable is locally a Ribaucour 
partial tube (Corollary \ref{cor:sphdist}). In particular, this yields an explicit 
description of all surfaces with flat normal bundle of a space form such that the 
lines of curvature of one the two families have constant geodesic curvature 
(Corollary \ref{cor:surfaces}). Another immediate application is a description of all foliations of a space form by spherical submanifolds whose orthogonal distributions are integrable, in particular of all foliations of a space form whose leaves are spherical hypersurfaces (Corollary \ref{rptdiffeo}).

In the hypersurface case, we show that the condition of being invariant by the shape 
operator is automatically satisfied if the rank of the distribution is greater than 
half of the dimension of the hypersurface. In particular, it follows that any hypersurface 
with dimension $\geq 3$ of a space form that carries a spherical foliation of codimension 
one is locally a Ribaucour partial tube over a curve, giving a complete answer to the 
problem that was one of the initial motivations of this paper and leading to a rather 
explicit description of such hypersurfaces (Corollary \ref{cor:hyp}). 

Hypersurfaces of $\mathbb{R}^{n+1}$ that are Ribaucour partial tubes over curves can be also characterized by the fact that they carry a family of lines of curvature, correspondent to a simple principal curvature, whose orthogonal $(n-1)$-dimensional distribution is integrable and whose leaves are contained in hyperspheres of $\mathbb{R}^{n+1}$ that intersect the hypersurface orthogonally.  This has led us to investigate the more general class of hypersurfaces for which such leaves are contained in hyperspheres that do not necessarily intersect the hypersurface orthogonally. For $n=2$, this reduces to studying the class of surfaces with spherical lines of curvature correspondent to one of the principal curvatures, which was widely investigated by many geometers since the second half of the nineteenth century. The interest in such surfaces has been renewed in connection with the construction of immersed constant mean curvature tori in Euclidean three space by Wente \cite{we} and others. In \cite{we}, surfaces with spherical lines of curvature correspondent to one of the principal curvatures were called \emph{surfaces of Enneper type}. Accordingly, we say that a  hypersurface of $\R^{n+1}$ is of Enneper type if it carries a family of lines of curvature, correspondent to a simple principal curvature, whose orthogonal $(n-1)$-dimensional distribution is integrable and whose leaves are contained in hyperspheres or affine hyperplanes of $\mathbb{R}^{n+1}$. 

Our approach to studying hypersurfaces of  Enneper type was inspired by that presented in Bianchi's book \cite{bi} for the case $n=2$. First we show how Ribaucour partial tubes in $\mathbb{S}^{n}$ can be used to describe all hypersurfaces of Enneper type for which the leaves of the 
$(n-1)$-dimensional distribution are contained in affine hyperplanes of $\mathbb{R}^{n+1}$
(Theorem \ref{thm:sphleaves}). For $n=2$, these correspond to surfaces with planar lines of curvature associated with one of the principal curvatures. Then we prove that any  hypersurface of Enneper type can be constructed in terms of a hypersurface in the latter class (Theorem \ref{thm:class}). For that, we first show how to parametrize any hypersurface of Enneper type in $\mathbb{R}^{n+1}$ in terms of its Gauss map and a triple $(\gamma, \alpha, \beta)$, where $\gamma\colon I\to \mathbb{R}^{n+1}$ is a smooth curve and $\alpha, \beta\in C^{\infty}(I)$ (Theorem \ref{prop:sphleaves0}). Then we determine all the triples $(\bar\gamma, \bar\alpha, \bar\beta)$ that give rise to hypersurfaces of Enneper type in $\mathbb{R}^{n+1}$ with the same Gauss map as a given one (Proposition \ref{prop:enneperfamily}). It turns out that, among them, there always exists a hypersurface for which the hyperspheres containing the leaves of the $(n-1)$-dimensional distribution all pass through a common point. An inversion with respect to a hypersphere centered at that point then maps such hyperspheres into affine hyperplanes, and hence, maps the hypersurface into a hypersurface of Enneper type whose leaves are contained in affine hyperplanes of $\mathbb{R}^{n+1}$.

We also give an explicit description of hypersurfaces of Enneper type in $\mathbb{R}^{n+1}$ for which the leaves of the $(n-1)$-dimensional distribution are contained either in concentric hyperspheres, parallel affine hyperplanes or affine hyperplanes that intersect along a common affine $(n-1)$-dimensional subspace (Theorem \ref{thm:joach}). Surfaces in $\R^3$ with the last of these properties  are classically known as Joachimsthal surfaces, and our result yields a new description of these surfaces (Corollary \ref{cor:joach}). It was recently shown in \cite{st} that this last property is also shared by the so called cyclic conformally flat hypersurfaces of $\R^4$ with three distinct principal curvatures.

   In the last part of the article, the aforementioned characterization of Ribaucour partial tubes is  applied to 
the program of investigating the geometry of an isometric immersion with high codimension 
by trying to ``decompose it"  into simpler ``components". This program has similar 
counterparts in many branches of mathematics,  with the decomposition of an integer into 
prime factors as its most basic example. In differential geometry, from an intrinsic point 
of view it has led to several de Rham-type theorems, which provide conditions under which a certain Riemannian manifold is (locally or globally) isometric to a product manifold whose metric is of a certain type. 

 Here we first derive such a de Rham-type theorem that gives  conditions for a
Riemannian manifold to be locally isometric to a product manifold whose metric is
 conformal to a polar metric (Theorem \ref{t:carspherical}). Recall that
a metric $g$ on a product manifold $M = \prod_{i=0}^r M_i$ is said to be 
\emph{polar} if there exist a metric $g_0$ on $M_0$ and, for each $1 \leq a \leq r$, 
a family of metrics on $M_a$ smoothly parametrized by $M_0$, such that
$$
g = \pi_0^*g_0 + \sum_{a=1}^r \pi_a^*(g_a \circ \pi_0),	
$$
where $\pi_i\colon M \to M_i$ is the projection for $0\leq i\leq r$. 
Polar metrics include as special cases the warped product of 
metrics  $g_0,\ldots,g_r$ on $M_0,\ldots,M_r$, respectively, with smooth 
warping functions $\rho_a\colon M_0\to\R_+$, 
$1\leq a\leq r$, that is,  metrics given by 
$$
g=\pi_0^*g_0+\sum_{a=1}^r(\rho_a\circ\pi_{0})^2\pi_a^*g_a,
$$
in particular the Riemannian product of $g_0,\ldots,g_r$, for which
the warping functions $\rho_a$, $1\leq a\leq r$, are 
identically one. Warped (respectively, Riemannian) product metrics 
correspond to polar metrics for which all metrics $g_a(x_0)$ on $M_a$, 
$1\leq a\leq r$, $x_0\in M_0$, are  homothetical (respectively, isometric) 
to a fixed Riemannian metric. Our result extends previous results 
in \cite{hiepko}, \cite{t1} and \cite{t3} for warped product metrics, metrics
that are conformal to Riemannian and warped product metrics, and polar 
metrics, respectively.

From an extrinsic point of view, several decomposition theorems for immersions 
of product manifolds have been obtained under the assumption that the tangent 
spaces to the factors are invariant by all shape operators, starting from Moore's 
basic result characterizing extrinsic products of immersions among isometric 
immersions of Riemannian product manifolds that satisfy that condition. 
Moore's theorem has been generalized in \cite{nol}, \cite{t2}, \cite{t3} and 
\cite{t4} for product manifolds endowed with more general types of metrics, 
namely, warped product metrics, metrics that are
conformal to Riemannian product and warped product metrics, and polar metrics, 
respectively. Here we use the notion of a Ribaucour partial tube to provide a 
further generalization for the fairly general class of metrics that are conformal 
to polar metrics. Namely, we give a complete description of all conformal immersions 
of a product manifold endowed with a polar metric under 
the assumption that the tangent spaces of the factors are invariant by all shape 
operators (Theorem \ref{thm:decomp}).

\section{The Ribaucour transformation}

This section is devoted to review some basic facts on the Ribaucour transformation. 
For further details  we refer to \cite{dt} and \cite{dt2}.

Let $f\colon M^n \to \R^m$ be an isometric immersion of a Riemannian manifold $M^n$. 
We denote by $\mathcal{S}(M)$ the module of symmetric sections of the
vector bundle of endomorphisms of $TM$, that is, those elements of $\Gamma(\text{End}(TM))$
such that $\left<\Phi X, Y\right> = \left<X, \Phi Y\right>$ for all $X, Y\in \mathfrak{X}(M)$.

 A map $\mathcal{F}\colon M^n \to \R^m$ is called a \emph{Combescure transform} of 
an isometric immersion  $f\colon M^n \to \R^m$ determined by $\Phi \in \mathcal{S}(M)$ 
when $\mathcal{F}_* = f_* \circ \Phi$.  This condition forces $\Phi$ to satisfy the
Codazzi equation
$$(\nabla_X \Phi) Y = (\nabla_Y \Phi) X$$
and to commute with the second fundamental form of $f$, in the sense that
$$\alpha(X,\Phi Y) = \alpha(\Phi X, Y)$$
for all $X, Y\in \mathfrak{X}(M)$. Conversely, if $M^n$ is simply connected then any $\Phi \in \mathcal{S}(M)$ satisfying these two conditions determines a Combescure transform 
$\mathcal{F}\colon M^n \to \R^m$ of $f\colon M^n \to \R^m$ such that 
$\mathcal{F}_* = f_* \circ \Phi$.  
 
 For any Combescure transform $\mathcal{F}\colon M^n \to \R^m$ of an isometric immersion  
$f\colon M^n \to \R^m$ of a simply connected Riemannian manifold, there exist $\varphi \in C^\infty(M)$ and $\beta \in \Gamma(N_fM)$ satisfying
\begin{equation}\label{e:combescuresolution}
\alpha(\text{grad}\, \varphi, X) + \nabla_X^\perp \beta = 0
\end{equation}
such that 
\begin{equation}\label{e:combescuredetermination}
\mathcal{F} = f_* \text{grad}\, \varphi + \beta \quad \text{and} \quad \Phi 
= \text{Hess}\,\varphi - A_\beta.
\end{equation}
Conversely, any solution $(\varphi, \beta)$ of \eqref{e:combescuresolution} determines a 
Combescure transform $\mathcal{F}$ of $f$ defined by \eqref{e:combescuredetermination}.  
We denote by $\mathcal{D}(f)$ the space of solutions  $(\varphi, \beta)$ of \eqref{e:combescuresolution}.  

Given an isometric immersion $f\colon M^n \to \R^m$, an immersion 
$\tilde{f}\colon M^n \to \R^m$ is said to be a \emph{Ribaucour transform} of $f$ when 
$||f - \tilde{f}|| \neq 0$ everywhere and there exists a triple $(\mathcal{P},D,\delta)$, 
with $\mathcal{P}\colon f^*T\R^m \to \tilde{f}^*T\R^m$ a vector bundle isometry, 
$D \in \mathcal{S}(M)$ and $\delta \in \Gamma(f^*T\R^m)$ nowhere vanishing, such that
\begin{enumerate}
\item[(i)] $\mathcal{P}Z - Z = \left<Z,\delta\right>(f - \tilde{f})$
\item[(ii)] $\tilde{f}_* = \mathcal{P} f_* D$. 
\end{enumerate}
Geometrically, $f$ and $\tilde{f}$ envelop a common  congruence of $n$-dimensional spheres, 
with $\mathcal{P}(x)$, $x\in M^n$, being the reflection with respect to the hyperplane
orthogonal to $\tilde{f}(x)-f(x)$, $x\in M^n$. The requirement that the tensor $D$ be symmetric 
implies that the shape operators $A^f_\xi$ and $A^{\tilde f}_{\mathcal{P}\xi}$ with respect to 
corresponding normal directions commute for every $\xi\in \Gamma(N_fM)$. 
For surfaces in $\mathbb{R}^3$, this is equivalent to requiring $f$ and $\tilde f$ to share 
the same lines of curvature. 

A nice feature of the Ribaucour transformation is that all Ribaucour transforms of a given 
isometric immersion $f\colon M^n \to \R^m$ can be explicitly parametrized as follows in terms of 
$f$ and the pairs $(\varphi, \beta) \in \mathcal{D}(f)$.

\begin{theorem}[\cite{dt2}] \label{t:ribcharac}
Let $f\colon M^n \to \R^m$ be an isometric immersion of a simply connected Riemannian manifold and let 
$\tilde{f}\colon M^n \to \R^m$ be a Ribaucour transform of $f$ with data $(\mathcal{P},D,\delta)$. 
Then, there exists $(\varphi, \beta) \in \mathcal{D}(f)$ such that 
\begin{equation}\label{e:formularib}
\tilde{f} = f - 2\nu\varphi\mathcal{F},	
\end{equation}
where $\mathcal{F} = f_*\text{grad}\, \varphi + \beta $ is the Combescure transform determined by 
$(\varphi,\beta)$ and $\nu = \left<\mathcal{F},\mathcal{F}\right>^{-1}$.  Moreover,
$$
\mathcal{P} = I - 2\nu\mathcal{F}^*\mathcal{F}, 
\quad D= I - 2\nu \varphi \Phi \quad \text{and} \quad \delta = -\varphi^{-1}\mathcal{F}, 
$$	
where $\Phi = \text{Hess}\,\varphi - A_\beta$ and $\mathcal{F}^* \in \Gamma(f^*T\R^m)^*)$ is defined 
by $\mathcal{F}^*Z = \left<\mathcal{F},Z\right>$.

Conversely, given $(\varphi,\beta) \in \mathcal{D}(f)$ and an open subset $U \subset M^n$ where $\varphi$ and $\mathcal{F}= f_*\text{grad}\, \varphi + \beta$ are nowhere vanishing and $D$ is invertible, then $\tilde{f}\colon U \to \R^m$ given by \eqref{e:formularib} is a Ribaucour transform of $f|_U$.  
\end{theorem}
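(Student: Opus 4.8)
The plan is to establish both implications, with the converse (the last paragraph of the statement) amounting to a direct verification and the main work lying in the characterization of an arbitrary Ribaucour transform. For the direct implication, suppose $\tilde f$ is a Ribaucour transform of $f$ with data $(\mathcal{P},D,\delta)$ and write $h=f-\tilde f$. Condition (i) reads $\mathcal{P}Z=Z+\langle Z,\delta\rangle h$, and imposing that $\mathcal{P}$ be an isometry, $\langle\mathcal{P}Z,\mathcal{P}W\rangle=\langle Z,W\rangle$ for all $Z,W$, forces $h$ to be a multiple of $\delta$ and then fixes the factor, giving $f-\tilde f=-\frac{2}{\langle\delta,\delta\rangle}\delta$. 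Setting $\mu=2\langle\delta,\delta\rangle^{-1}$, so that $\tilde f=f+\mu\delta$, I would decompose $\delta=f_*T+\eta$ into its tangent and normal components along $f$ and introduce the one-form $\omega(X)=\langle T,X\rangle$.

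The next step is to extract how $\delta$ evolves. Differentiating $\tilde f-f=\mu\delta$, substituting condition (ii) in the form $\tilde f_*=\mathcal{P}f_*D$ together with $\mathcal{P}Z=Z-\mu\langle Z,\delta\rangle\delta$, and grouping the terms proportional to $f_*(\cdot)$ and to $\delta$, yields $\partial_X\delta=\frac{1}{\mu}f_*(D-I)X-b(X)\delta$, where $b(X)=\langle DX,T\rangle+X(\log\mu)$ and $\partial$ denotes the flat connection of $\R^m$. I expect the crux of the whole argument to be the identity $b=-\omega$. The clean way to obtain it is not through the integrability (Gauss--Codazzi) equations but purely algebraically: differentiating $\langle\delta,\delta\rangle=2/\mu$, using metric compatibility of $\partial$ and the formula just derived for $\partial_X\delta$, produces a linear relation among $b(X)$, $\langle(D-I)X,T\rangle$ and $X(\log\mu)$ that collapses to $b=-\omega$. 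Two consequences follow at once: $\omega$ is closed, since $d\omega=-b\wedge\omega=\omega\wedge\omega=0$; and, as $M^n$ is simply connected, the equation $d\log|\varphi|=-\omega$ can be integrated to a nowhere-vanishing $\varphi\in C^\infty(M)$ with $\text{grad}\,\varphi=-\varphi T$.

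With $\varphi$ fixed I would set $\mathcal{F}=-\varphi\delta$ and $\beta=-\varphi\eta$; then $\mathcal{F}=f_*\text{grad}\,\varphi+\beta$ by the previous paragraph. To see that $(\varphi,\beta)\in\mathcal{D}(f)$, I would differentiate $\mathcal{F}=-\varphi\delta$ and use the expression for $\partial_X\delta$: the normal component of $\partial_X\mathcal{F}$ equals $\varphi\big(b(X)+\omega(X)\big)\eta$, which vanishes precisely because $b=-\omega$, and this is exactly the Combescure equation \eqref{e:combescuresolution}. Consequently $\Phi=\text{Hess}\,\varphi-A_\beta$ coincides with $-\frac{\varphi}{\mu}(D-I)$, and the symmetry of $D$ ensures $\Phi\in\mathcal{S}(M)$. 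All the displayed formulas then follow by bookkeeping with $\nu=\langle\mathcal{F},\mathcal{F}\rangle^{-1}=\mu/(2\varphi^2)$: one reads off $\tilde f=f-2\nu\varphi\mathcal{F}$, $\delta=-\varphi^{-1}\mathcal{F}$, $D=I-2\nu\varphi\Phi$ and $\mathcal{P}=I-2\nu\mathcal{F}^*\mathcal{F}$.

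For the converse I would run the same computations as a verification on the open subset $U$ where $\varphi$ and $\mathcal{F}$ are nowhere vanishing and $D$ is invertible. Starting from $(\varphi,\beta)\in\mathcal{D}(f)$, the Gauss and Weingarten formulas together with \eqref{e:combescuresolution} give $\mathcal{F}_*=f_*\Phi$, i.e. $\mathcal{F}$ is a Combescure transform. Defining $\tilde f,\mathcal{P},D,\delta$ by the stated formulas, the identity $\nu\langle\mathcal{F},\mathcal{F}\rangle=1$ makes $\mathcal{P}=I-2\nu\mathcal{F}^*\mathcal{F}$ a self-adjoint involution, hence a vector bundle isometry, while $D$ is symmetric because $\Phi$ is; differentiating $\tilde f=f-2\nu\varphi\mathcal{F}$ and invoking $\mathcal{F}_*=f_*\Phi$ gives $\tilde f_*=\mathcal{P}f_*D$, which is (ii), and invertibility of $D$ makes $\tilde f$ an immersion. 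Condition (i) reduces to $\mathcal{P}Z-Z=-2\nu\langle Z,\mathcal{F}\rangle\mathcal{F}=\langle Z,\delta\rangle(f-\tilde f)$, which holds by the definitions of $\delta$ and $\nu$. The only genuinely delicate point in the whole proof is the closedness argument in the direct part; everything else is careful tracking of tangent and normal components.
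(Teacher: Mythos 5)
The paper does not actually prove this theorem --- it is imported from \cite{dt2} --- so your proposal has to stand on its own, and most of it does. The reduction of condition (i) plus the isometry property of $\mathcal{P}$ to $\tilde f = f+\mu\delta$ with $\mu=2\langle\delta,\delta\rangle^{-1}$, the derivation of $\partial_X\delta=\tfrac1\mu f_*(D-I)X-b(X)\delta$ from condition (ii), and the identity $b=-\omega$ obtained by differentiating $\langle\delta,\delta\rangle=2/\mu$ are all correct (the last computation indeed collapses to $b(X)-\omega(X)-2b(X)=0$); the final bookkeeping with $\nu=\mu/(2\varphi^2)$ and the converse verification are routine and check out.

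The gap sits exactly at the step you yourself flag as the crux: the closedness of $\omega$. The chain ``$d\omega=-b\wedge\omega=\omega\wedge\omega=0$'' is vacuous --- since $b=-\omega$, the right-hand side vanishes for trivial reasons, so the display merely restates the claim $d\omega=0$; and the first equality $d\omega=-b\wedge\omega$ is never derived, nor do I see how it could be, since $f_*TM$ and $\mathrm{span}\{\delta\}$ do not form a direct-sum splitting with respect to which $-b$ is a connection form. The correct argument --- and the only place in the direct implication where the hypothesis $D\in\mathcal{S}(M)$ is actually used --- is to take the tangential component of your equation for $\partial_X\delta$ after substituting $b=-\omega$: writing $\delta=f_*T+\eta$, one finds $\nabla_XT=A_\eta X+\tfrac1\mu(D-I)X+\langle T,X\rangle T$, so $\langle\nabla_XT,Y\rangle$ is symmetric in $X,Y$ because $A_\eta$ and $D$ are, whence $d\omega(X,Y)=\langle\nabla_XT,Y\rangle-\langle\nabla_YT,X\rangle=0$. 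The fact that the symmetry of $D$ appears nowhere else in your direct implication should have been a warning sign: it is built into the definition of a Ribaucour transform precisely so that $\omega$ is closed and $\varphi$ exists. With that single step repaired, your proof is complete.
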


The Ribaucour transform determined by  $(\varphi,\beta)$ is denoted by $\mathcal{R}_{(\varphi,\beta)}f$. 

\begin{example}\label{ribs}\emph{
\noindent $(i)$ Given a point $P_0\in \R^{m}$ and $r>0$, set 
$2\varphi_1=\|f-P_0\|^2-r^2$ and $\beta_1=(f-P_0)_{N_fM}$.
Then $\mathcal{F}=f-P_0$, $\Phi=I$, and
$$
\tilde f=\mathcal{R}_{(\varphi_1,\beta_1)}(f)=P_0+r^2\|f-P_0\|^{-2}(f-P_0)
$$
is the composition of $f$ with an inversion with respect to
the sphere of radius $r$ centered at $P_0$.  \\
\noindent $(ii)$ Given a parallel normal vector field $\xi$,
define $(\varphi_2,\beta_2)$ by $2\varphi_2=\|\xi\|^2$ and $\beta_2=-\xi$.
Then $\mathcal{F}=-\xi$
and
$$
\tilde f=\mathcal{R}_{(\varphi_2,\beta_2)}(f)=f+\xi
$$
is the parallel translation  of $f$ by $\xi$. 
}\end{example}

The Ribaucour transformation can be easily extended for submanifolds of any space form 
$\mathbb{Q}_c^{m}$. Namely, an immersion $\tilde{f}\colon\, M^n\to\mathbb{Q}_c^{m}$
is a Ribaucour transform of an isometric immersion $f\colon M^n\to \mathbb{Q}_c^{m}$ with data $(\mathcal{P},D,\delta)$ if $\tilde{F}:=i\circ \tilde{f}\colon\,M^n\to\mathbb{R}_{\epsilon}^{m+1}$, where $i\colon \mathbb{Q}_c^{m}\to \mathbb{R}_{\epsilon}^{m+1}$ is the umbilical inclusion, is a Ribaucour transform of $F=i\circ f$ with data 
$(\hat{\mathcal{P}},D,\hat{\delta})$, where $\hat{\delta}=\delta-cF$ and $\hat{\mathcal{P}}\colon\,F^*\mathbb{R}_{\epsilon}^{m+1}\to\tilde{F}^*\mathbb{R}_{\epsilon}^{m+1}$ is the extension of $\mathcal{P}$ defined by setting $\hat{\mathcal{P}}(F)=\tilde{F}$. 
In this setting, Theorem \ref{t:ribcharac} reads as follows.

\begin{theorem}[\cite{dt2}]\label{pr:rcsc} 
Let $f\colon\,M^n\to\mathbb{Q}_c^{m}$
be an isometric immersion of a simply connected Riemannian manifold and let
$\tilde{f}\colon\, M^n\to \mathbb{Q}_c^{m}$ be a Ribaucour transform of $f$ with data
$(\mathcal{P},D,\delta)$. Then there exists
$(\varphi,\beta)\in \mathcal{D}(f)$ such~that
\begin{equation}\label{eq:rb2}
\tilde{F}= F - 2\nu\varphi\mathcal{G},
\end{equation}
where $\mathcal{G} =F_*\text{grad}\,\varphi + \beta +c\varphi F$ and
$\nu=\left<\mathcal{G}, \mathcal{G}\right>^{-1}$.
Moreover,
\begin{equation}\label{eq:pdo2}
\hat{\mathcal{P}}=I - 2\nu\mathcal{G}\mathcal{G}^*,\;\;\;
D=I - 2\nu\varphi(\text{Hess}\,\varphi +c\varphi I - A_\beta)
\;\;\;\mbox{and}\;\;\;\hat{\delta}
=-\varphi^{-1}\mathcal{G}.
\end{equation}
Conversely, given $(\varphi,\beta)\in \mathcal{D}(f)$ and an open subset $U\subset M^n$
where $\varphi\nu\neq 0$ and the tensor $D$ given by {\em (\ref{eq:pdo2})} is
invertible,  let $\tilde{F}\colon\,U\to\mathbb{R}_{\epsilon}^{m+1}$ be defined by
{\em (\ref{eq:rb2})}. Then $\tilde{F}=i\circ \tilde{f}$, where
$\tilde{f}$ is a Ribaucour transform of $f$.
\end{theorem}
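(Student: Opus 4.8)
The statement is the space-form counterpart of Theorem \ref{t:ribcharac}, and by the very definition of a Ribaucour transform in $\mathbb{Q}_c^m$ recalled just above it, the plan is to reduce it to the flat case via the umbilical inclusion $i\colon\mathbb{Q}_c^m\to\mathbb{R}_\epsilon^{m+1}$. By definition, $\tilde F=i\circ\tilde f$ is a Ribaucour transform of $F=i\circ f$ with data $(\hat{\mathcal{P}},D,\hat\delta)$, so Theorem \ref{t:ribcharac}, in the form valid for the flat ambient space $\mathbb{R}_\epsilon^{m+1}$, applies to the pair $F,\tilde F$ and produces $(\varphi,\hat\beta)\in\mathcal{D}(F)$ together with the flat formulas $\tilde F=F-2\nu\varphi\mathcal{F}$, $\hat{\mathcal{P}}=I-2\nu\mathcal{F}\mathcal{F}^*$, $D=I-2\nu\varphi(\text{Hess}\,\varphi-A^F_{\hat\beta})$ and $\hat\delta=-\varphi^{-1}\mathcal{F}$, where $\mathcal{F}=F_*\text{grad}\,\varphi+\hat\beta$ and $\nu=\langle\mathcal{F},\mathcal{F}\rangle^{-1}$. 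The task then reduces to translating this flat data into the intrinsic data of $f$ in $\mathbb{Q}_c^m$.

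Next I would set up the geometric dictionary coming from the umbilicity of $i$. Writing $F$ also for the position vector field along $F$, one has the orthogonal splitting $N_FM=N_fM\oplus\text{span}\{F\}$ with $\langle F,F\rangle=1/c$, and a direct computation from $\tilde\nabla_XF=F_*X$ gives $\alpha^F(X,Y)=\alpha(X,Y)-c\langle X,Y\rangle F$, whence $A^F_F=-I$ and $A^F_\beta=A_\beta$ for every $\beta\in\Gamma(N_fM)$. Moreover $\hat\nabla^\perp_XF=0$, and since $\langle\tilde\nabla_X\beta,F\rangle=-\langle\beta,F_*X\rangle=0$ the $N_fM$-component of $\hat\nabla^\perp_X\beta$ coincides with $\nabla^\perp_X\beta$, while the Hessian of $\varphi$ is intrinsic and hence unchanged.

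With this dictionary in hand, decompose $\hat\beta=\beta+gF$ with $\beta\in\Gamma(N_fM)$ and $g\in C^\infty(M)$, and split the Combescure equation $\alpha^F(\text{grad}\,\varphi,X)+\hat\nabla^\perp_X\hat\beta=0$ into its $\text{span}\{F\}$ and $N_fM$ parts. The first part reads $(Xg-c\,X\varphi)F=0$ for all $X$, which together with the normalization forced by $\hat\delta=\delta-cF$ (equivalently, by $\delta$ being tangent to $\mathbb{Q}_c^m$) gives $g=c\varphi$; hence $\mathcal{F}=F_*\text{grad}\,\varphi+\beta+c\varphi F=\mathcal{G}$. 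The second part is exactly $\alpha(\text{grad}\,\varphi,X)+\nabla^\perp_X\beta=0$, that is, $(\varphi,\beta)\in\mathcal{D}(f)$. It now remains to substitute into the flat formulas: \eqref{eq:rb2} and the expressions for $\hat{\mathcal{P}}$ and $\hat\delta$ are immediate once $\mathcal{F}=\mathcal{G}$, while for $D$ one uses $A^F_{\hat\beta}=A^F_\beta+c\varphi A^F_F=A_\beta-c\varphi I$ to rewrite $\text{Hess}\,\varphi-A^F_{\hat\beta}=\text{Hess}\,\varphi+c\varphi I-A_\beta$, which yields \eqref{eq:pdo2}.

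For the converse one runs the same correspondence backwards: given $(\varphi,\beta)\in\mathcal{D}(f)$, set $\hat\beta=\beta+c\varphi F$ and check, using the same dictionary, that $(\varphi,\hat\beta)\in\mathcal{D}(F)$; the converse direction of Theorem \ref{t:ribcharac} then guarantees that $\tilde F$ defined by \eqref{eq:rb2} is a Ribaucour transform of $F$, so that $\tilde F=i\circ\tilde f$ for a Ribaucour transform $\tilde f$ of $f$ in $\mathbb{Q}_c^m$. The only genuinely substantive step is establishing the geometric dictionary above and, in particular, confirming that the $F$-component of $\hat\beta$ is pinned to $c\varphi$; once that is secured, the remainder is a routine algebraic substitution into the already-proven flat formulas.
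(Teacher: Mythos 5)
Your argument is correct and is exactly the reduction the paper's framework dictates: since a Ribaucour transform in $\mathbb{Q}_c^m$ is \emph{defined} via the umbilical inclusion into $\mathbb{R}_\epsilon^{m+1}$, applying Theorem \ref{t:ribcharac} to $F=i\circ f$ and translating through the dictionary $\alpha^F(X,Y)=\alpha^f(X,Y)-c\left<X,Y\right>F$, $A^F_F=-I$, $\hat\nabla^\perp F=0$ is the intended proof, and your use of the normalization $\left<\hat\delta,F\right>=-1$ to pin the $F$-component of $\hat\beta$ to $c\varphi$ is the right way to remove the additive constant left by the Codazzi-type equation. Note only that the paper itself states this theorem as a citation from \cite{dt2} without proof, and that in the converse direction you should also record the one-line check $\left<\tilde F,\tilde F\right>=1/c$ (using $\left<F,\mathcal{G}\right>=\varphi$), which is what guarantees $\tilde F=i\circ\tilde f$ for a map $\tilde f$ into $\mathbb{Q}_c^m$.
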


\section{Ribaucour partial tubes}

In this section we introduce the concept of a Ribaucour partial tube, 
on which some of the main results of this article are based.

Let $f_1\colon M_1 \to \R^m$ be an isometric immersion along which there is
an orthonormal set $\{\xi_1,\ldots,\xi_k\}$ of  normal vector fields that are 
parallel in the normal connection.
The subbundle ${\mathcal L}=\text{span}\,\{\xi_1,\ldots,\xi_k\}$  
of $N_{f_1}M_1$ is thus parallel and flat. Hence the map 
$\Psi\colon M_1\times\R^k\to {\mathcal L}$, defined by
$$
\Psi_{x_1}(y)=\Psi(x_1,y)=\sum_{i=1}^ky_i\xi_i(x_1)
$$ 
for all $x_1\in M_1$ and $y=(y_1,\ldots, y_k)\in\R^k$, is a parallel vector 
bundle isometry. 

For a fixed $y\in \mathbb{R}^k$, we denote by
$\Psi(y)$ the parallel section of $\mathcal{L}$ given by $\Psi(y)(x_1) = \Psi(x_1,y)$
for all $x_1\in M_1$. Given an isometric immersion $f_0\colon M_0 \to \R^k$, first recall that the partial tube over $f_1$ with $f_0$ as fiber is the map  $g\colon M_0 \times M_1 \to \R^m$ given by
$$
g(x_0, x_1)=f_1(x_1)+\Psi_{x_1}(f_0(x_0)).
$$
Geometrically, $g(M_0\times M_1)$ is generated by taking the image of $f_0(M_0)$ under a fixed $\Psi_{x_1}$, $x_1\in M_1$, and parallel translating it along $f_1$ with respect to its normal connection (see, e.g., Chapter $10$ of \cite{dt0} for details).  

Now take $(\varphi,\beta) \in \mathcal{D}(f_1)$ and define $f\colon M_0 \times M_1 \to \R^m$ by
\begin{equation}\label{e:ribpartialtube1}
f(x_0,x_1) = \big(\mathcal{R}_{(\varphi,\beta + \Psi(f_0(x_0)))}f_1\big)(x_1)=(f_1 - 2\nu_{x_0}\varphi \mathcal{F}_{x_0})(x_1),
\end{equation}
where $\mathcal{F}_{x_0} = f_1{}_*\text{grad}\,\varphi + \beta + \Psi(f_0(x_0))$ and $\nu_{x_0} 
= ||\mathcal{F}_{x_0}||^{-2}$.  
For each $x_0\in M_0$, we denote by 
$$
\mathcal{P}_{x_0} = I - 2\nu_{x_0}\mathcal{F}_{x_0}^*\mathcal{F}_{x_0}\quad \text{and} \quad D_{x_0} 
= I - 2\nu_{x_0}\varphi\Phi_{x_0},
$$
with $\Phi_{x_0} = \text{Hess}\,\varphi - A_{\beta + \Psi(f_0(x_0))}^{f_1}$, the vector bundle 
isometry and the symmetric endomorphism associated with the Ribaucour transform 
$\mathcal{R}_{(\varphi,\beta + \Psi(f_0(x_0)))}f_1$ of $f_1$. 

If, in particular, $(\varphi,\beta) \in \mathcal{D}(f_1)$ is given by $2\varphi=-1$ 
and $\beta=0$,  then
$$
\mathcal{R}_{(\varphi,\beta + \Psi(f_0(x_0)))}f_1=f_1-2\nu_{x_0}\varphi\mathcal{F}_{x_0}\\
=f_1+ \Psi\left(\frac{f_0(x_0)}{\|f_0(x_0)\|^2}\right),
$$
thus $\mathcal{R}_{(\varphi,\beta + \Psi(f_0(x_0)))}f_1$ reduces to the  partial tube 
over $f_1$ whose fiber is the  composition of $f_0$ with an inversion with respect a hypersphere of unit radius centered at the origin.\vspace{1ex}

Given a product manifold $M = \prod_{i=0}^r M_i$ and $x=(x_0, \ldots, x_r)\in M$, we 
denote by $\tau^x_i \colon M_i \to M$ the inclusion given by
 $\tau_i^x(y_i) = (x_0, \ldots, x_{i-1},y_i, x_{i+1}, \ldots, x_r)$. 

\begin{proposition}\label{p:ribdiff}
The differential of the map $f$ in (\ref{e:ribpartialtube1}) at $x = (x_0,x_1)$ is given by
\begin{equation}\label{e:difm0}
f_* \tau_0^x{}_*X_0 = - 2\nu_{x_0}\varphi\mathcal{P}_{x_0}(\Psi_{x_1}(f_0{}_*X_0))
\end{equation}
and
\begin{equation}\label{e:difm1}
f_* \tau_1^x{}_*X_1 = \mathcal{P}_{x_0}f_1{}_*D_{x_0}X_1,  	
\end{equation}
for all $X_i \in T_{x_i}M_i$, $0 \leq i \leq 1$.
\end{proposition}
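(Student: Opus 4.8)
The plan is to treat the two coordinate directions separately, exploiting the fact that for each \emph{fixed} $x_0$ the map $x_1\mapsto f(x_0,x_1)$ in \eqref{e:ribpartialtube1} is literally a Ribaucour transform of $f_1$, to which Theorem~\ref{t:ribcharac} applies directly, while the $M_0$-direction requires only an honest differentiation.

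For the $M_1$-direction \eqref{e:difm1}, I would first check that for fixed $x_0$ the pair $(\varphi,\beta+\Psi(f_0(x_0)))$ lies in $\mathcal{D}(f_1)$. Indeed, $\Psi(f_0(x_0))$ is a constant-coefficient combination of the normal-parallel fields $\xi_1,\ldots,\xi_k$, hence is itself parallel in the normal connection, so $\nabla^\perp_X\Psi(f_0(x_0))=0$ and the Combescure equation \eqref{e:combescuresolution} satisfied by $(\varphi,\beta)$ is unaffected by adding it. Consequently $x_1\mapsto f(x_0,x_1)$ is exactly $\mathcal{R}_{(\varphi,\beta+\Psi(f_0(x_0)))}f_1$, and by Theorem~\ref{t:ribcharac} the data associated with this Ribaucour transform are precisely the $\mathcal{P}_{x_0}$, $D_{x_0}$, with $\Phi_{x_0}=\text{Hess}\,\varphi-A^{f_1}_{\beta+\Psi(f_0(x_0))}$, fixed in the statement. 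Then \eqref{e:difm1} is nothing but condition (ii) in the definition of a Ribaucour transform, namely $\tilde f_*=\mathcal{P}f_1{}_*D$, read off for this pair.

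For the $M_0$-direction \eqref{e:difm0}, I would fix $x_1$ and differentiate along a curve $t\mapsto x_0(t)$ with $x_0(0)=x_0$ and $x_0'(0)=X_0$. In the expression $f(x_0,x_1)=f_1(x_1)-2\varphi\,\nu_{x_0}\mathcal{F}_{x_0}$, the terms $f_1(x_1)$ and $\varphi=\varphi(x_1)$ are independent of $x_0$, so only $\nu_{x_0}$ and $\mathcal{F}_{x_0}$ vary. Since $f_1{}_*\text{grad}\,\varphi$ and $\beta$ are also independent of $x_0$ while $\Psi_{x_1}$ is linear in its second argument, one gets $\tfrac{d}{dt}\mathcal{F}_{x_0(t)}=\Psi_{x_1}(f_0{}_*X_0)=:W$. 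A product-rule computation, using $\tfrac{d}{dt}\nu_{x_0(t)}=-2\nu_{x_0}^2\langle\mathcal{F}_{x_0},W\rangle$, then yields
\[
\frac{d}{dt}\Big|_{0}\big(\nu_{x_0(t)}\mathcal{F}_{x_0(t)}\big)=\nu_{x_0}W-2\nu_{x_0}^2\langle\mathcal{F}_{x_0},W\rangle\mathcal{F}_{x_0}=\nu_{x_0}\big(W-2\nu_{x_0}\langle\mathcal{F}_{x_0},W\rangle\mathcal{F}_{x_0}\big).
\]
Recognizing the bracketed vector as $\mathcal{P}_{x_0}W$, where $\mathcal{P}_{x_0}$ is the reflection $Z\mapsto Z-2\nu_{x_0}\langle\mathcal{F}_{x_0},Z\rangle\mathcal{F}_{x_0}$, and multiplying by $-2\varphi$ gives exactly \eqref{e:difm0}.

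The computation is essentially bookkeeping, and I do not expect any genuine obstacle. The two points requiring care are keeping straight which quantities depend on $x_0$ and which on $x_1$, and verifying at the outset that $\Psi(f_0(x_0))$ is normal-parallel: it is this observation that makes the $M_1$-slices honest Ribaucour transforms and thereby lets me invoke Theorem~\ref{t:ribcharac} for \eqref{e:difm1} rather than recomputing that differential by hand.
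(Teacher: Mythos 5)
Your proof is correct and follows essentially the same route as the paper: equation \eqref{e:difm1} is read off from Theorem \ref{t:ribcharac} applied to the slice $x_1\mapsto f(x_0,x_1)=\mathcal{R}_{(\varphi,\beta+\Psi(f_0(x_0)))}f_1$, and \eqref{e:difm0} is obtained by the same direct product-rule differentiation of $-2\nu_{x_0}\varphi\mathcal{F}_{x_0}$, recognizing the result as $-2\nu_{x_0}\varphi\mathcal{P}_{x_0}(\Psi_{x_1}(f_0{}_*X_0))$. Your explicit check that $\Psi(f_0(x_0))$ is normal-parallel, so that $(\varphi,\beta+\Psi(f_0(x_0)))\in\mathcal{D}(f_1)$, is a worthwhile detail that the paper leaves implicit in the definition \eqref{e:ribpartialtube1}.
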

\begin{proof}
 Differentiating \eqref{e:ribpartialtube1} we obtain
\begin{align*}
f_* \tau_0^x{}_*X_0 &= 4v_{x_0}^2\varphi\left<\Psi_{x_1}(f_0{}_*X_0),
\mathcal{F}_{x_0}\right> \mathcal{F}_{x_0} - 2\nu_{x_0}\varphi\Psi_{x_1}(f_0{}_*X_0)\\
&= - 2\nu_{x_0}\varphi\mathcal{P}_{x_0}(\Psi_{x_1}(f_0{}_*X_0)).
\end{align*}
Equation \eqref{e:difm1} is part of the assertions in Theorem \ref{t:ribcharac}. 
\end{proof}

  If the map $f$ given by (\ref{e:ribpartialtube1}) is an immersion at any point, then it is called the 
\emph{Ribaucour partial tube} over $f_1$ with fiber $f_0$ associated with $\Psi$ and 
$(\varphi,\beta) \in \mathcal{D}(f_1)$, or the Ribaucour partial tube determined by 
$(f_0, f_1, \Psi, \varphi, \beta)$. It follows from Proposition \ref{p:ribdiff} that 
$f$ is an immersion at $(x_0, x_1)\in M$ if and only if neither 
$\varphi$ nor $\mathcal{F}_{x_0}$ vanish at $x_1$ and the endomorphism 
$D_{x_0}$ of $T_{x_1}M_1$ is invertible. 
We always assume that $f_0\colon M_0 \to \R^k$ is a substantial immersion, for if this is not
the case, say, $f_0(M_0) \subset v + \mathbb{R}^l$ for some $v\in \mathbb{R}^k$ and $l<k$,  
we may replace $(\varphi, \beta)$ by $(\varphi, \beta + \Psi_{x_1}(v))$ and restrict $\Psi$ 
to $M_1 \times \mathbb{R}^l$. 
\begin{proposition}\label{prop:basic}
The following assertions on the Ribaucour partial tube $f$ determined by $(f_0,f_1,\Psi,\varphi,\beta)$ hold:

\begin{enumerate}[(i)]
\item The induced metric is given by
$$
\left<\tau_0^x{}_*X_0 + \tau_1^x{}_*X_1, \tau_0^x{}_*Y_0 + \tau_1^x{}_*Y_1\right>_f 
= 4\nu_{x_0}^2\varphi^2\left<X_0,Y_0\right>_{f_0} + \left<D_{x_0}^2X_1,Y_1\right>_{f_1}.
$$
\item The normal space of $f$ at $(x_0, x_1)$ is
$$
N_fM(x_0,x_1) = \mathcal{P}_{x_0}\big(\mathcal{L}^\perp(x_1) \oplus \Psi_{x_1}(N_{f_0}M_0(x_0))\big).
$$
\item Given $\delta \in \Gamma(\mathcal{L}^\perp)$ and $\zeta \in \Gamma(N_{f_0}M_0)$, 
the shape operators of $f$ with respect to $\hat{\delta}, \hat{\zeta} \in N_f(M)$, defined by  
$$
\hat{\delta}(x_0,x_1) = \mathcal{P}_{x_0}\delta(x_1)\;\;\mbox{and}\;\;\hat{\zeta}(x_0,x_1) 
= \mathcal{P}_{x_0}(\Psi_{x_1}(\zeta(x_0))),
$$
are
\begin{equation}\label{e:shaped1}
A_{\hat{\delta}}^f \tau_0^x{}_* = -\varphi^{-1}\left<\delta,\beta\right>\tau_0^x{}_*, 
\end{equation}
\begin{equation}\label{e:shaped2}
A_{\hat{\delta}}^f \tau_1^x{}_* = \tau_1^x{}_*D_{x_0}^{-1}(A_\delta^{f_1} 
+ 2\nu_{x_0}\left<\beta,\delta\right>\Phi_{x_0}),	
\end{equation}
\begin{equation}\label{e:shapexi1}
A_{\hat{\zeta}}^f \tau_0^x{}_* = - \frac{1}{\varphi}\left( \frac{1}{2\nu_{x_0}}\tau_0^x{}_*A_\zeta^{f_0} + \left<\Psi_{x_1}(\zeta),\mathcal{F}_{x_0}\right>\tau_0^x{}_* \right)
\end{equation}
and
\begin{equation}\label{e:shapexi2}
A_{\hat{\zeta}}^f \tau_1^x{}_* =  \tau_1^x{}_*D_{x_0}^{-1} (A_{\Psi_{x_1}(\zeta)}^{f_1} + 2\nu_{x_0}\left<\Psi_{x_1}(\zeta),\mathcal{F}_{x_0}\right>\Phi_{x_0}).		
\end{equation}
\item The normal connection of $f$ is given by
\begin{equation}\label{e:normalcond1}
^{f}\nabla_{\tau_0^x{}_*X_0}^\perp  \hat{\delta} = 0	,
\end{equation}
\begin{equation}\label{e:normalcond2}
^{f}\nabla_{\tau_1^x{}_*X_1}^\perp  \hat{\delta} = \mathcal{P}_{x_0} {}^{f_1}\nabla_{x_1}^\perp \delta,
\end{equation}
\begin{equation}\label{e:normalconxi}
^{f}\nabla_{\tau_0^x{}_*X_0}^\perp  \hat{\zeta} 
= \mathcal{P}_{x_0}(\Psi_{x_1}({}^{f_0}\nabla_{X_0}^\perp \zeta))	
\end{equation}
and
\begin{equation}\label{e:normalconxi2}
^{f}\nabla_{\tau_1^x{}_*X_1}^\perp  \hat{\zeta} = 0.
\end{equation} 
\end{enumerate}
\end{proposition}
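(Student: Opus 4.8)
\emph{The plan.} I would prove the four items in the order (i), (ii), the mixed second fundamental form, (iii), (iv), reducing every $M_1$-direction statement to the Ribaucour theory of Section 2 applied to the slice $\tilde f_{x_0}:=f\circ\tau_1^x=\mathcal{R}_{(\varphi,\beta+\Psi(f_0(x_0)))}f_1$ (with $x_0$ fixed), and every $M_0$-direction statement to direct differentiation of \eqref{e:difm0}--\eqref{e:difm1}. The guiding observation for the $M_0$ directions is that, for fixed $x_1$, the slice $f\circ\tau_0^x$ equals $f_1(x_1)-2\varphi\,\mathcal{R}_{(\varphi_1,\beta_1)}(\Psi_{x_1}\circ f_0+a)$, where $a:=f_1{}_*\text{grad}\,\varphi+\beta$ at $x_1$ and $\mathcal{R}_{(\varphi_1,\beta_1)}$ is the inversion of Example \ref{ribs}(i). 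Since $\Psi_{x_1}\circ f_0+a$ equals $\mathcal{F}_{x_0}$ at $x_1$, this inversion carries the data $(\mathcal{P}_{x_0},\nu_{x_0},\mathcal{F}_{x_0})$, with $\Phi=I$ and $D=\nu_{x_0}I$.

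\emph{Items (i) and (ii).} Because $\mathcal{P}_{x_0}$ and $\Psi_{x_1}$ are linear isometries, \eqref{e:difm0} and \eqref{e:difm1} produce the three diagonal blocks of the metric at once; the mixed block vanishes since $\Psi_{x_1}(f_0{}_*X_0)\in\mathcal L$ is normal to $f_1$ while $f_1{}_*D_{x_0}Y_1$ is tangent to $f_1$, which gives (i). For (ii) I would split $\R^m$ at $x_1$ as $f_1{}_*T_{x_1}M_1\oplus\Psi_{x_1}(f_0{}_*T_{x_0}M_0)\oplus\Psi_{x_1}(N_{f_0}M_0)\oplus\mathcal L^\perp$, apply the isometry $\mathcal{P}_{x_0}$, and use \eqref{e:difm0}, \eqref{e:difm1} to recognize the first two images as $f_*T_xM$; the image of the last two is then $N_fM$, which is the claimed expression.

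\emph{The mixed second fundamental form.} The crux is $\alpha^f(\tau_0^x{}_*X_0,\tau_1^x{}_*Y_1)=0$, which is exactly what makes both tangent distributions invariant under all shape operators. I would differentiate \eqref{e:difm1} in the $M_0$ direction and pair with $\mathcal{P}_{x_0}\eta_0$ for $\eta_0\in\{\delta,\Psi_{x_1}(\zeta)\}$, using $\nabla_{X_0}\mathcal{F}_{x_0}=\Psi_{x_1}(f_0{}_*X_0)=:u$, $\nabla_{X_0}\nu_{x_0}=-2\nu_{x_0}^2\langle\mathcal{F}_{x_0},u\rangle$ and $\mathcal{P}_{x_0}\mathcal{F}_{x_0}=-\mathcal{F}_{x_0}$. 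The contribution of $\nabla_{X_0}(f_1{}_*D_{x_0}Y_1)$ is tangent to $f_1$, hence orthogonal to $\eta_0$, and the surviving terms collapse to $-2\nu_{x_0}\langle\mathcal{F}_{x_0},f_1{}_*D_{x_0}Y_1\rangle\langle u,\eta_0\rangle$. This vanishes because $u\in\mathcal L$ is orthogonal to $\delta\in\mathcal L^\perp$ and $\langle u,\Psi_{x_1}(\zeta)\rangle=\langle f_0{}_*X_0,\zeta\rangle=0$; by (ii) this exhausts $N_fM$.

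\emph{Items (iii), (iv), and the obstacle.} Granting the mixed vanishing, \eqref{e:shaped2} and \eqref{e:shapexi2} are precisely the shape operators of the Ribaucour transform $\tilde f_{x_0}$ with respect to $\mathcal{P}_{x_0}\delta$ and $\mathcal{P}_{x_0}\Psi_{x_1}(\zeta)$ (cf.\ \cite{dt2}), after noting $\langle\mathcal{F}_{x_0},\delta\rangle=\langle\beta,\delta\rangle$, the vanishing ensuring these operators preserve $\tau_1^x{}_*TM_1$. For \eqref{e:shaped1} and \eqref{e:shapexi1} I would feed $A^{\Psi_{x_1}\circ f_0+a}_{\Psi_{x_1}(\zeta)}=A^{f_0}_\zeta$ and $A^{\Psi_{x_1}\circ f_0+a}_{\delta}=0$ (as $\delta\perp\mathcal L$) into the Ribaucour shape-operator formula for the inversion (where $\Phi=I$, $D=\nu_{x_0}I$) and apply the homothety relation $A^{f\circ\tau_0^x}_\eta=(-2\varphi)^{-1}A^{\mathcal{R}_{(\varphi_1,\beta_1)}(\cdots)}_\eta$, the mixed vanishing again removing any coupling between $\tau_0^x{}_*$ and $\tau_1^x{}_*$. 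In (iv), \eqref{e:normalcond1} and \eqref{e:normalconxi} follow by differentiating $\hat\delta$ and $\hat\zeta$ in the $M_0$ direction: the term $(\nabla_{X_0}\mathcal{P}_{x_0})v$ has zero $N_fM$-projection by the computation above (now $\langle u,v\rangle=0$ as well for $v\in\{\delta,\Psi_{x_1}(\zeta)\}$), while the Weingarten formula for $f_0$ yields a tangential term that drops and the normal term $\mathcal{P}_{x_0}\Psi_{x_1}({}^{f_0}\nabla^\perp_{X_0}\zeta)$; and \eqref{e:normalcond2}, \eqref{e:normalconxi2} follow from the fact that $\mathcal{P}_{x_0}$ intertwines the normal connections of $f_1$ and $\tilde f_{x_0}$ (cf.\ \cite{dt2}), together with ${}^{f_1}\nabla^\perp\delta\in\mathcal L^\perp$ (the $\xi_i$ being parallel) and the parallelism of $\Psi(\zeta)$. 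The main obstacle is the mixed-vanishing computation: it is the conceptual heart of the proposition and demands the most careful bookkeeping, after which everything else reduces to the Ribaucour theory already recalled and to the inversion of Example \ref{ribs}(i).
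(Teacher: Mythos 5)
Your proposal is correct, but it is organized quite differently from the paper's proof. The paper treats items (iii) and (iv) in a single, self-contained Gauss--Weingarten computation: it differentiates the explicit expressions $\hat{\delta}=\delta-2\nu_{x_0}\langle\mathcal{F}_{x_0},\delta\rangle\mathcal{F}_{x_0}$ and $\hat{\zeta}=\Psi(\zeta)-2\nu_{x_0}\langle\Psi(\zeta),\mathcal{F}_{x_0}\rangle\mathcal{F}_{x_0}$ in the ambient space along both factor directions, and then reads off the shape operators and the normal connection simultaneously by recognizing the tangential and normal pieces through \eqref{e:difm0} and \eqref{e:difm1}; in particular the adaptedness of $\alpha^f$ to the product net, which you single out as the ``conceptual heart'' and prove as a prerequisite, falls out of that computation as a byproduct (the tangential parts automatically land in the correct factor), so the paper never needs it as a separate lemma. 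Your route instead establishes $\alpha^f(E_0,E_1)=0$ first and then reduces everything to slice-wise Ribaucour theory: the $M_1$-slices are the Ribaucour transforms $\mathcal{R}_{(\varphi,\beta+\Psi(f_0(x_0)))}f_1$, whose shape operators and normal connection you import from \cite{dt2}, and the $M_0$-slices are homotheties of ratio $-2\varphi(x_1)$ of the inversion of $\Psi_{x_1}\circ f_0+a$ as in Example \ref{ribs}(i), with data $\Phi=I$, $D=\nu_{x_0}I$, $\mathcal{P}=\mathcal{P}_{x_0}$ --- an identification I checked and which is correct, as is your cancellation leaving $-2\nu_{x_0}\langle\mathcal{F}_{x_0},f_1{}_*D_{x_0}Y_1\rangle\langle u,\eta_0\rangle$ in the mixed computation (after splitting $\nabla_{X_0}(\mathcal{P}_{x_0}w)=(\nabla_{X_0}\mathcal{P}_{x_0})w+\mathcal{P}_{x_0}\nabla_{X_0}w$) and your observation that $\langle\mathcal{F}_{x_0},\delta\rangle=\langle\beta,\delta\rangle$ and that ${}^{f_1}\nabla^\perp\delta\in\Gamma(\mathcal{L}^\perp)$ is what keeps \eqref{e:normalcond2} inside $N_fM$. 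What your approach buys is conceptual transparency --- it explains \emph{why} \eqref{e:shaped2} and \eqref{e:shapexi2} are exactly the Ribaucour shape-operator formulas and why \eqref{e:shaped1} and \eqref{e:shapexi1} are umbilic-type on $E_0$ --- at the cost of outsourcing the shape-operator and normal-connection formulas of a Ribaucour transform to \cite{dt2} (the present paper only states \eqref{e:formularib} and the data $(\mathcal{P},D,\delta)$, not those derived formulas) and of the extra bookkeeping needed to pass between the shape operators of $f$ and of its slices; the paper's single direct computation is shorter and keeps everything internal.
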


\begin{proof}
Items (i) and (ii) are immediate consequences of  \eqref{e:difm0} and \eqref{e:difm1}.  
For (iii) and (iv), on one hand we have
\begin{equation*}
\bar{\nabla}_{\tau_0^x{}_*X_0} \hat{\delta} = 
-f_*A_{\hat{\delta}}^f \tau_0^x{}_*X_0 + {}^f \nabla_{\tau_0^x{}_*X_0}^\perp \hat{\delta} \quad \text{and} \quad 
\bar{\nabla}_{\tau_1^x{}_*X_1} \hat{\delta} = 
-f_*A_{\hat{\delta}}^f \tau_1^x{}_*X_1 + {}^f \nabla_{\tau_1^x{}_*X_1}^\perp \hat{\delta}.
\end{equation*}
On the other hand, using \eqref{e:difm0} and \eqref{e:difm1} we obtain
\begin{align*}
\bar{\nabla}_{\tau_0^x{}_*X_0} \hat{\delta} 
&= \bar{\nabla}_{\tau_0^x{}_*X_0} (\delta - 2\nu_{x_0}\left<\mathcal{F}_{x_0},\delta\right>\mathcal{F}_{x_0})\\
&=  - 2\nu_{x_0}\left<\beta,\delta\right>\mathcal{P}_{x_0}\Psi_{x_1}(f_0{}_*X_0)\\
&= \varphi^{-1}\left<\beta,\delta\right>f_*\tau_0^x{}_*X_0 
\end{align*}
and
\begin{align*}
\bar{\nabla}_{\tau_1^x{}_*X_1} \hat{\delta} 
&=\bar{\nabla}_{\tau_1^x{}_*X_1} (\delta - 2\nu_{x_0}\left<\mathcal{F}_{x_0},\delta\right>\mathcal{F}_{x_0})\\
&= -\mathcal{P}_{x_0}f_1{}_*(A_\delta^{f_1}X_1 + 2\nu_{x_0}\left<\beta,\delta\right>\Phi_{x_0}X_1) 
+ \mathcal{P}_{x_0}{}^{f_1}\nabla_{X_1}^\perp\delta \\
&= -f_*\tau_1^x{}_*D_{x_0}^{-1}(A_\delta^{f_1}X_1 + 2\nu_{x_0}\left<\beta,\delta\right>\Phi_{x_0}X_1)
+ \mathcal{P}_{x_0}{}^{f_1}\nabla_{X_1}^\perp \delta,  
\end{align*}
which yield \eqref{e:shaped1}, \eqref{e:shaped2}, \eqref{e:normalcond1} and \eqref{e:normalcond2}. 
Repeating the argument for $\hat{\zeta} \in \Gamma(N_fM)$ gives
$$\bar{\nabla}_{\tau_0^x{}_*X_0} \hat{\zeta} 
= -f_*A_{\hat{\zeta}}^f \tau_0^x{}_*X_0 + {}^f\nabla_{\tau_0^x{}_*X_0}^\perp \hat{\zeta}\quad  
\text{and} \quad \bar{\nabla}_{\tau_1^x{}_*X_1} \hat{\zeta} = -f_*A_{\hat{\zeta}}^f \tau_1^x{}_*X_1 
+ {}^f\nabla_{\tau_1^x{}_*X_1}^\perp \hat{\zeta}.$$
Using \eqref{e:difm0}, \eqref{e:difm1} and the fact that $\Psi$ is a parallel 
vector bundle isometry, we obtain
\begin{align*}
\bar{\nabla}_{\tau_0^x{}_*X_0} \hat{\zeta} &= \bar{\nabla}_{\tau_0^x{}_*X_0} (\Psi(\zeta) - 2\nu_{x_0}\left<\Psi(\zeta),\mathcal{F}_{x_0}\right>\mathcal{F}_{x_0})\\
&=-\mathcal{P}_{x_0} \Psi_{x_1}f_0{}_*(A_\zeta^{f_0}X_0 
+ 2\nu_{x_0} \left<\Psi_{x_1}(\zeta),\mathcal{F}_{x_0}\right>X_0) 
+ \mathcal{P}_{x_0}(\Psi_{x_1} ({}^{f_0}\nabla_{X_0}^\perp \zeta))\\
&= (2\nu_{x_0}\varphi)^{-1}f_*\tau_0^x{}_*A_\zeta^{f_0}X_0 
+\varphi^{-1}\left<\Psi_{x_1}(\zeta),\mathcal{F}_{x_0}\right>f_*\tau_0^x{}_*X_0 \\
&\quad+ \mathcal{P}_{x_0}(\Psi_{x_1} ({}^{f_0}\nabla_{X_0}^\perp \zeta)) 
\end{align*}
and 
\begin{align*}
\bar{\nabla}_{\tau_1^x{}_*X_1} \hat{\zeta} &=\bar{\nabla}_{\tau_1^x{}_*X_1} (\Psi(\zeta) - 2\nu_{x_0}\left<\Psi(\zeta),\mathcal{F}_{x_0}\right>\mathcal{F}_{x_0})\\
&= -\mathcal{P}_{x_0}f_1{}_*(A_{\Psi_{x_1}(\zeta)}^{f_1}X_1 
+ 2\nu_{x_0}\left<\Psi(\zeta),\mathcal{F}_{x_0}\right>\Phi_{x_0}X_1)\\
&= -f_* \tau_1^x{}_*D_{x_0}^{-1} (A_{\Psi_{x_1}(\zeta)}^{f_1}X_1 
+ 2\nu_{x_0}\left<\Psi(\zeta),\mathcal{F}_{x_0}\right>\Phi_{x_0}X_1).
\end{align*}
Thus \eqref{e:shapexi1}, \eqref{e:shapexi2}, \eqref{e:normalconxi} 
and \eqref{e:normalconxi2} follow.\qed \vspace{1ex}

 Before we state some  consequences of the preceding proposition, we first recall some terminology. 
 
 A \emph{net} $\mathcal{E} = (E_i)_{i=0, \ldots, r}$ 
on a differentiable manifold $M$ is a decomposition of its tangent bundle 
$TM = \oplus_{i=0}^r E_i$ as a Whitney sum of integrable distributions.  
If $M$ is a Riemannian manifold, the net $\mathcal{E} = (E_i)_{i=0, \ldots, r}$ 
is said to be an \emph{orthogonal net} if the distributions $E_i$ are mutually orthogonal. 
Given an isometric immersion $f\colon M\to \mathbb{R}^m$ of a Riemannian manifold $M$ equipped with a net  $\mathcal{E} = (E_i)_{i=0, \ldots, r}$, then the second fundamental form $\alpha_f$ of $f$ is said to be \emph{adapted to the net} $\mathcal{E}$ if $\alpha_f(X_i,X_j)=0$ whenever $X_i\in \Gamma(E_i)$ and $X_j\in \Gamma(E_j)$ with $1\leq i\neq j\leq r$.

In a product manifold $M = \prod_{i=0}^r M_i$, we have a natural net 
$\mathcal{E} = (E_i)_{i=0, \ldots, r}$, called its \emph{product net}, given by the 
tangent bundles of its factors, that is, $E_i(x) = \tau^x_i{}_* T_{x_i}M_i$
for all $x = (x_0, \ldots, x_r) \in M$. For each $0\leq i\leq r$ we
 denote $$M_{\perp_i} = \prod_{\substack{j=0 \\ j\neq i}}^r M_j$$
and define the projection $\pi_{\perp_i}\colon M \to M_{\perp_i}$ by
$$\pi_{\perp_i}(x_0, \ldots, x_r) = (x_0, \ldots, x_{i-1}, x_{i+1}, \ldots, x_r).$$  
Given $x_i \in M_i$, the map $\mu_{x_i}\colon M_{\perp_i} \to M$ stands for  the inclusion
$$\mu_{x_i}(y_0, \ldots, y_{i-1}, y_{i+1}, \ldots, y_r)
 = (y_0, \ldots, y_{i-1}, x_i, y_{i+1}, \ldots, y_r).$$

  A tangent subbundle $E \subset TM$ is 
$\emph{umbilical}$ if there exists $Z \in \Gamma(E^\perp)$ such that 
\begin{equation}\label{e:defumbilical}
(\nabla_T S)_{E^\perp} = \left<T,S\right>Z
\end{equation}
for all  $T, S \in \Gamma(E)$, where the subscript $E^\perp$ denotes the orthogonal projection onto $E^\perp$. Umbilical distributions are integrable and the leaves are 
totally umbilical submanifolds with mean curvature vector field $Z$.  
If, in addition, $Z$ satisfies
\begin{equation}\label{e:defspherical}
(\nabla_T Z)_{E^\perp} = 0
\end{equation}
for any $T \in \Gamma(E)$, then $E$ is said to be \emph{spherical} and its  
leaves are extrinsic spheres.  

\begin{corollary}\label{cor:rpt}
Let $f\colon M_0\times M_1\to \R^m$ be the Ribaucour partial tube determined by 
$(f_0,f_1,\Psi,\varphi,\beta)$ and let $\mathcal{E} = (E_0, E_1)$ be the product 
net of $M$. Then the following assertions hold:
\begin{itemize}
\item[(i)] The net $\mathcal{E}$ is orthogonal with respect to the metric induced by $f$.
\item[(ii)] The second fundamental form of $f$ is adapted to $\mathcal{E}$.
\item[(iii)] $E_0$ is a spherical distribution.
\end{itemize}
\end{corollary}
\proof The assertions in items $(i)$ and $(ii)$ are immediate consequences of 
parts $(i)$ and $(iii)$ of Proposition \ref{prop:basic}, respectively. 
To prove item $(iii)$, we must show that there exists $Z \in \Gamma(E_1)$ such that
\begin{equation}\label{sph1}
\left<\nabla_S T,X\right> = \left<S,T\right>\left<X,Z\right>
\end{equation}
and
\begin{equation}\label{sph2}
\left<\nabla_S Z,X\right> = 0
\end{equation}
for all $S$, $T \in \Gamma(E_0)$ and $X \in \Gamma(E_1)$.  
We have
\begin{align*}
\bar{\nabla}_{\tau_0^x{}_*X_0}&f_*\tau_0^x{}_*Y_0 \\
&= 	-\bar{\nabla}_{\tau_0^x{}_*X_0}2\nu_{x_0}\varphi\mathcal{P}_{x_0}(\Psi(f_0{}_*Y_0))\\
&=\bar{\nabla}_{\tau_0^x{}_*X_0} (-2\nu_{x_0}\varphi\Psi(f_0{}_*Y_0) 
+ 4\nu_{x_0}^2 \varphi \left<\Psi(f_0{}_*Y_0),\mathcal{F}_{x_0}\right>\mathcal{F}_{x_0})\\
&= 4\nu_{x_0}^2\varphi \left<\Psi_{x_1}(f_0{}_*X_0),
\mathcal{F}_{x_0}\right>\mathcal{P}_{x_0}(\Psi_{x_1}(f_0{}_*Y_0))\\
&\quad + 4\nu_{x_0}^2\varphi \left<\Psi_{x_1}(f_0{}_*Y_0),
\mathcal{F}_{x_0}\right>\mathcal{P}_{x_0}(\Psi_{x_1}(f_0{}_*X_0)) \\
&\quad- 2\nu_{x_0}\varphi \mathcal{P}_{x_0}(\Psi_{x_1}(f_0{}_*\nabla_{X_0}Y_0 + \alpha^{f_0}(X_0,Y_0))\\
&\quad - \varphi^{-1}\left<\tau_0^x{}_*X_0,\tau_0^x{}_*Y_0\right>_f\mathcal{P}_{x_0}(f_1{}_*\text{grad}_1\, \varphi 
+ \beta + \Psi_{x_1}(f_0(x_0)),
\end{align*}
where $\text{grad}_1$ stands for the gradient with respect to the metric induced by $f_1$. Using \eqref{e:difm0} and \eqref{e:difm1} we obtain 
$$\left<\nabla_{\tau_0^x{}_*X_0}\tau_0^x{}_*Y_0,\tau_1^x{}_*Y_1\right> = - \varphi^{-1}\left<\tau_0^x{}_*X_0,\tau_0^x{}_*Y_0\right>_f\left<\tau_1^x{}_*D^{-1}_{x_0}\text{grad}_1\, \varphi,\tau_1^x{}_*Y_1\right>_f.$$
Hence \eqref{sph1} holds with
$
Z(x_0,x_1) = -\tau_1^x{}_*D_{x_0}^{-1}\text{grad}_1 \log \varphi(x_1).
$
Now,  \eqref{sph2} follows from
\begin{align*}
\bar{\nabla}_{\tau_0^x{}_*X_0}f_*Z &= -\bar{\nabla}_{\tau_0^x{}_*X_0}\mathcal{P}_{x_0}f_1{}_*\text{grad}_1 \log \varphi \\
&= -4\nu_{x_0}^2\left<\Psi_{x_1}(f_0{}_*X_0),
\mathcal{F}_{x_0}\right> \left<f_1{}_*\text{grad}_1 \log \varphi,\mathcal{F}_{x_0}\right>\mathcal{F}_{x_0}\\
&\quad  + 2\nu_{x_0}\left<f_1{}_*\text{grad}_1 \log \varphi,\mathcal{F}_{x_0}\right>\Psi_{x_1}(f_0{}_*X_0)\\
&= -\varphi^{-1}\left<f_1{}_*\text{grad}_1 \log \varphi,\mathcal{F}_{x_0}\right>f_*\tau_0^x{}_*X_0\\
&=-\left<f_*Z,f_*Z\right>f_*\tau_0^x{}_*X_0.
\end{align*}\end{proof}

\section{A characterization of Ribaucour partial tubes}
 
The aim of this section is to prove that conditions $(i)$ to $(iii)$ in Corollary \eqref{cor:rpt} characterize Ribaucour partial tubes among immersions $f\colon M_0\times M_1\to \R^m$ of product manifolds. We make use of the following lemma
(see Proposition $9$ of~\cite{t3}).  

\begin{lemma} \label{prop:ext1} Let $f\colon\,M^n\to\R^{m}$ and 
$g\colon\,L^k\to M^{n}$ be isometric immersions. Then the following assertions are equivalent:
\begin{itemize}
\item[(i)]  $g$ is an extrinsic sphere whose mean curvature vector has
length $1/r$, $r>0$,  and
$
\alpha_f(g_*X,Z)=0 
$
for all $X\in \mathfrak{X}(L)$ and $Z\in \Gamma(N_gL)$. \vspace{1ex}
\item[(ii)] There exists $\zeta\in \Gamma(N_gL)$ of length $1/r$, $r>0$, such that the subbundle of $f_*N_gL$ orthogonal to $f_*\zeta$ is constant in $\R^m$ and the map $f\circ g+r^2f_*\zeta$ is constant on $L^k$.\vspace{1ex}
\item[(iii)] $f(g(L))$ is contained in a sphere of radius $r$  and dimension $(m\!-n\!+\!k)$ 
in $\R^m$ whose normal space along $f(g(L))$ is $f_*N_gL$.
\end{itemize}
\end{lemma}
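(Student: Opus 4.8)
The plan is to prove the three assertions equivalent through the cyclic chain (i) $\Rightarrow$ (ii) $\Rightarrow$ (iii) $\Rightarrow$ (i). The computational engine behind every implication is the pair of first-order identities for the composition $h := f\circ g\colon L^k\to\R^m$. For any section $\sigma$ of $TM$ along $g$ and any $X\in\mathfrak{X}(L)$,
$$
\bar\nabla_{g_*X}(f_*\sigma) = f_*(\nabla_X\sigma) + \alpha_f(g_*X,\sigma),
$$
where $\nabla$ is the Levi-Civita connection of $M$, together with the Gauss formula for the composition,
$$
\alpha_h(X,Y) = f_*\alpha_g(X,Y) + \alpha_f(g_*X,g_*Y),
$$
in which $f_*\alpha_g(X,Y)\in f_*N_gL$ and $\alpha_f(g_*X,g_*Y)\in N_fM$. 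Throughout I will exploit the orthogonal decomposition $N_hL = f_*N_gL\oplus N_fM|_{g(L)}$, valid because $N_fM\perp f_*TM\supseteq f_*N_gL$.

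For (i) $\Rightarrow$ (ii) I take $\zeta$ to be the mean curvature vector field of $g$, so $\|\zeta\|=1/r$; total umbilicity gives $\alpha_g(X,Y)=\langle X,Y\rangle\zeta$ and hence $A^g_\zeta = r^{-2}\,\mathrm{Id}$, while the extrinsic-sphere hypothesis gives $\nabla^\perp_X\zeta=0$. Differentiating $f_*\zeta$ by the first identity and using $\alpha_f(g_*X,\zeta)=0$ from (i) yields $\bar\nabla_{g_*X}(f_*\zeta)=-r^{-2}f_*g_*X$, whence $\bar\nabla_X(h+r^2 f_*\zeta)=0$ and $h+r^2 f_*\zeta\equiv P_0$ is constant. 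For the subbundle statement I take $\eta\in\Gamma(N_gL)$ with $\eta\perp\zeta$; umbilicity forces $A^g_\eta=0$ and parallelism of $\zeta$ keeps $\nabla^\perp_X\eta$ orthogonal to $\zeta$, so the same identity gives $\bar\nabla_{g_*X}(f_*\eta)=f_*(\nabla^\perp_X\eta)\in f_*(\zeta^\perp\cap N_gL)$. Thus this subbundle is parallel for the trivial connection and is therefore a fixed subspace $V_0\subset\R^m$.

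For (ii) $\Rightarrow$ (iii) the relation $h-P_0=-r^2 f_*\zeta$ gives $\|h-P_0\|=r^2\|\zeta\|=r$, so $h(L)$ lies on the round sphere $S^{m-1}(P_0,r)$. I then set $\Sigma:=(P_0+V_0^\perp)\cap S^{m-1}(P_0,r)$, a round sphere of dimension $\dim V_0^\perp-1=m-n+k$. Since $f_*\zeta\perp V_0$, the vector $h-P_0$ lies in $V_0^\perp$, so $h(L)\subset\Sigma$; and computing the normal space of $\Sigma$ gives $V_0\oplus\mathrm{span}(h-P_0)=f_*(\zeta^\perp\cap N_gL)\oplus f_*\,\mathrm{span}(\zeta)=f_*N_gL$, as required.

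The main work, and the step I expect to be the principal obstacle, is (iii) $\Rightarrow$ (i), where all the geometry must be recovered from a single containment. Writing $\Sigma=(P_0+W)\cap S^{m-1}(P_0,r)$ with $\dim W=m-n+k+1$ and factoring $h=j\circ\bar h$ through $\Sigma$, I compute $\alpha_h$ twice: once by the composition Gauss formula, and once through $\Sigma$, using that $P_0+W$ is totally geodesic and $\Sigma$ is totally umbilical in it, so that $\alpha_h(X,Y)=\alpha_{\bar h}(X,Y)-r^{-2}\langle X,Y\rangle(h-P_0)$. The crux is to check that the two splittings of $N_hL$ agree: I verify $N_fM=N_{\bar h}L$ (both are the orthocomplement of $f_*g_*TL$ inside $T\Sigma$, of equal dimension $m-n$) and that $\mathrm{span}(h-P_0)\subset f_*N_gL$. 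Matching the $f_*N_gL$-components gives $f_*\alpha_g(X,Y)=-r^{-2}\langle X,Y\rangle(h-P_0)$, i.e. $g$ is totally umbilical with $\zeta:=f_*^{-1}\big(r^{-2}(h-P_0)\big)$ of length $1/r$. Differentiating $f_*\zeta=r^{-2}(h-P_0)$ and separating $f_*N_gL$- and $N_fM$-components yields simultaneously $\nabla^\perp_X\zeta=0$ (so $g$ is an extrinsic sphere) and $\alpha_f(g_*X,\zeta)=0$. Finally, to upgrade this to $\alpha_f(g_*X,Z)=0$ for every $Z\in N_gL$, I write $Z=a\zeta+\eta$ with $\eta\perp\zeta$ and note that $f_*\eta$ is a section of the fixed subspace $W^\perp$; since any curve in a fixed subspace has its derivative in that subspace, $\bar\nabla_{g_*X}(f_*\eta)\in W^\perp\subset f_*N_gL\subset f_*TM$, whose $N_fM$-component is exactly $\alpha_f(g_*X,\eta)$, forcing it to vanish. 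The delicate points are the dimension count identifying $N_fM$ with $N_{\bar h}L$ and the systematic use of the orthogonality $N_fM\perp f_*N_gL$ to isolate components; these are where sign and bookkeeping errors are most likely to occur.
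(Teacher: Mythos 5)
The paper itself contains no proof of this lemma: it is imported verbatim as Proposition $9$ of \cite{t3}, so there is no in-text argument to compare yours against. Judged on its own, your cyclic proof is correct and is essentially the standard argument: the two Gauss-type identities for $h=f\circ g$ together with the orthogonal splitting $N_hL=f_*N_gL\oplus N_fM|_{g(L)}$ are exactly the right engine, the identification $N_fM=N_{\bar h}L$ as the common orthocomplement of $h_*TL\oplus f_*N_gL$ is valid, and the final upgrade from $\alpha_f(g_*X,\zeta)=0$ to $\alpha_f(g_*X,Z)=0$ for all $Z\in\Gamma(N_gL)$ via the constancy of $W^\perp=f_*(\zeta^\perp\cap N_gL)$ is the one non-obvious point and you handle it correctly. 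The only blemish is a sign in $(iii)\Rightarrow(i)$: from $f_*\alpha_g(X,Y)=-r^{-2}\langle X,Y\rangle(h-P_0)$ the mean curvature vector of $g$ is $f_*^{-1}\bigl(-r^{-2}(h-P_0)\bigr)$, i.e.\ the negative of your $\zeta$ (as it should be, pointing toward the center of the sphere); this affects none of your conclusions, since the length, the parallelism in the normal connection, and the vanishing of $\alpha_f(g_*X,\cdot)$ are all insensitive to the sign.
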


The characterization of Ribaucour partial tubes is as follows.

\begin{theorem}\label{rpt} Let $f\colon M = M_0\times M_1 \to \R^m$ be an immersion   satisfying  conditions $(i)$ to $(iii)$ in Corollary \eqref{cor:rpt}. 
Then, for any fixed $\bar{x}_0\in M_0$, the map $f_1\colon M_1\to \R^m$ given by $f_1=f\circ \mu_{\bar{x}_0}$ is an immersion whose normal bundle $N_{f_1}M_1$ carries a parallel flat 
vector subbundle $\mathcal{L}$, and there exist a parallel vector bundle isometry 
$\Psi\colon M_1 \times \R^k \to \mathcal{L}$, an  immersion $f_0\colon M_0\to \R^k$ 
and $(\varphi,\beta) \in \mathcal{D}(f_1)$ such that $f$ is 
the Ribaucour partial tube determined by $(f_0, f_1, \Psi, \varphi, \beta)$.
\end{theorem}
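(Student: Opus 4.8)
The plan is to reconstruct the data $(f_0,f_1,\Psi,\varphi,\beta)$ directly from the geometry of $f$ and then check that the Ribaucour partial tube they determine coincides with $f$. First I would fix $\bar x_0\in M_0$ and set $f_1=f\circ\mu_{\bar x_0}$. Since $f$ is an immersion and $E_1$ has constant rank, $f_1$ is an immersion, and by the orthogonality of the product net (condition (i) of Corollary~\ref{cor:rpt}) its normal bundle splits as $N_{f_1}M_1=f_*E_0|_{(\bar x_0,\cdot)}\oplus N_fM|_{(\bar x_0,\cdot)}$. Condition (ii) says precisely that $\alpha_f(X_0,X_1)=0$ for $X_0\in\Gamma(E_0)$ and $X_1\in\Gamma(E_1)$, that is, every shape operator $A^f_\eta$ leaves the orthogonal splitting $E_0\oplus E_1$ invariant; this is exactly the vanishing hypothesis needed to feed the leaves of $E_0$ into Lemma~\ref{prop:ext1}.

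The core of the argument uses the spherical condition (iii) together with Lemma~\ref{prop:ext1}. For each $x_1\in M_1$ the leaf $M_0\times\{x_1\}$ is an extrinsic sphere of $M$, and along it $\alpha_f$ vanishes on tangent--normal pairs; hence Lemma~\ref{prop:ext1} applied to the inclusion $g_{x_1}\colon M_0\hookrightarrow M$ shows that $f(M_0\times\{x_1\})$ lies in a round sphere $S(x_1)\subset\R^m$ of radius $r(x_1)$, the reciprocal of the length of its mean curvature vector. The same lemma produces, for fixed $x_1$, a distinguished normal field and a subspace of $\R^m$ that is \emph{constant} as $x_0$ varies. Reading these objects along the section $\bar x_0$, I would promote the constant subspaces to a rank-$k$ subbundle $\mathcal{L}\subset N_{f_1}M_1$ together with a parallel orthonormal framing, which defines $\Psi$; the radius function $r$ and the position of $f_1(x_1)$ inside $S(x_1)$ determine $\varphi$ and $\beta$; and the image of the leaf, after the inversion of Example~\ref{ribs}(i) and read in the frame $\Psi$, determines the fiber $f_0\colon M_0\to\R^k$.

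I would then verify that these data are admissible, namely that $\mathcal{L}$ is parallel and flat in $N_{f_1}M_1$, that $\Psi$ is a parallel vector bundle isometry, that $f_0$ does not depend on $x_1$, and that $(\varphi,\beta)$ solves the Combescure equation~\eqref{e:combescuresolution}, so that $(\varphi,\beta)\in\mathcal{D}(f_1)$. The identities of Proposition~\ref{prop:basic} serve as a template: since they compute the induced metric, normal space, shape operators and normal connection of a genuine Ribaucour partial tube, I would match the corresponding quantities of $f$ against them, both to pin down the data and to extract the differential equations they must satisfy, using the Codazzi equation of $f$ throughout.

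With admissible data in hand, let $\tilde f$ be the Ribaucour partial tube determined by $(f_0,f_1,\Psi,\varphi,\beta)$. By construction $\tilde f$ and $f$ agree on $\{\bar x_0\}\times M_1$ and, by the way $f_0$ was defined, they parametrize the same leaf $f(M_0\times\{x_1\})$ for every $x_1$; since the leaves foliate $M$, this gives $f=\tilde f$. I expect the main obstacle to be the admissibility step of the third paragraph, above all the rigidity statement that the fiber $f_0$ and the subbundle $\mathcal{L}$ are independent of $x_1$. This is where the full strength of the spherical hypothesis must be used---not merely that the leaves of $E_0$ are umbilical, but that their mean curvature vectors are parallel in the normal connection---together with the constancy assertion in Lemma~\ref{prop:ext1}(ii); umbilicity alone would only yield a family of spheres with no coherent identification of their fibers.
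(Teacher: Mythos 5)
Your outline follows the same route as the paper up to the decisive step: restrict $f$ to $M_1$ via $\mu_{\bar x_0}$, observe that $N_{f_1}M_1=f_*E_0\oplus N_fM$ along $\{\bar x_0\}\times M_1$, and apply Lemma~\ref{prop:ext1} to the leaves of $E_0$ to place each $f(M_0\times\{x_1\})$ in a sphere $S(x_1)$ normal to $f_*E_1$. The gap is in how you produce $\mathcal{L}$, $\Psi$, $f_0$, $\varphi$, $\beta$ and prove they are admissible. Your one concrete suggestion --- to ``promote the constant subspaces'' furnished by Lemma~\ref{prop:ext1}(ii) to the subbundle $\mathcal{L}\subset N_{f_1}M_1$ --- cannot work: the subspace that the lemma asserts is constant along a leaf is $f_*E_1\cap\{f_*Z\}^\perp$, which at $\bar x_0$ equals $f_1{}_*T_{x_1}M_1\cap\{\mu(x_1)\}^\perp$ and is therefore \emph{tangent} to $f_1$, not normal; moreover $\mathcal{L}$ is required to be parallel in the normal connection of $f_1$, not constant in $\R^m$. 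The remaining plan (``match against Proposition~\ref{prop:basic} and use Codazzi'') does not identify the mechanism that actually yields parallelism and flatness of $\mathcal{L}$ and the $x_1$-independence of $f_0$, which you yourself flag as the main obstacle.

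What the paper does at this point is the content you are missing. Writing the leaf inside $S(x_1)$ through the inversion-type parametrization gives an immersion $h^{x_1}\colon M_0\to N_{f_1}M_1(x_1)$ with $f=f_1\circ\pi_1+\frac{2}{\|\rho\|^2}\rho$, where $\rho=\mu+h^{x_1}(x_0)$ and $\mu=f_1{}_*Z_1$. A reflection argument (using both the constancy of $f_*E_1\cap\{f_*Z\}^\perp$ and the constancy of the center $f+f_*Z/\|Z\|^2$ from Lemma~\ref{prop:ext1}) shows that $\theta-\frac{2\langle\theta,\rho\rangle}{\|\rho\|^2}\rho$ is tangent to $S(x_1)$ for every $\theta\in N_{f_1}M_1(x_1)$; differentiating the parametrization in the $M_1$ directions and projecting onto $N_{f_1}M_1$ then yields $\alpha^{f_1}(X_1,Z_1)+\nabla^\perp_{X_1}h(x_0)=\langle Z_1,X_1\rangle h^{x_1}(x_0)$. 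Subtracting this identity at $x_0$ and at a fixed $z_0$ gives the linear equation $\nabla^\perp_{X_1}\xi^{x_0}=\langle X_1,Z_1\rangle\,\xi^{x_0}$ for $\xi^{x_0}=h(x_0)-h(z_0)$, from which one reads off that $Z_1=\mathrm{grad}\,\tau$ with $\tau=\log\|\xi^{x_0}\|$ and that $e^{-\tau}\xi^{x_0}$ is a parallel normal section for every $x_0$. This is what makes $\mathcal{L}=\mathrm{span}\{\xi^{x_0}(\cdot):x_0\in M_0\}$ a parallel flat subbundle, defines $f_0$ by $\Psi(f_0(x_0))=e^{-\tau}\xi^{x_0}$ independently of $x_1$, and forces $\varphi=-e^{-\tau}$, $\beta=e^{-\tau}h(z_0)$, with $(\varphi,\beta)\in\mathcal{D}(f_1)$ becoming exactly the displayed identity above. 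Without this differential identity for the differences $h(x_0)-h(z_0)$, the rigidity you correctly single out as the crux is not established.
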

\begin{proof}   
The normal space of $f_1$ at $x_1\in M_1$ splits orthogonally as 
$$N_{f_1}M_1(x_1) = f_*E_0(\bar{x}_0,x_1) \oplus N_fM(\bar{x}_0,x_1).$$
Let $Z\in \Gamma(E_1)$ be the mean curvature vector field of $E_0$
and let $\mu\colon M_1 \to \R^m$ be defined by $\mu(x_1) = f_*Z(\bar{x}_0, x_1)=f_1{}_*Z_1(x_1)$, where $\mu_{\bar{x}_0}{}_*Z_1=Z\circ \mu_{\bar{x}_0}$. 
By Lemma~\ref{prop:ext1}, for each $x_1\in M_1$ the image by $f$ of the leaf 
$\sigma(x_1)=M_0\times \{x_1\}=\mu_{x_1}(M_0)$ of $E_0$ 
is contained in an $(m-m_1)$-dimensional sphere $S(x_1)$ through $f_1(x_1)$ having 
$f_*E_1$ as its normal bundle along $\sigma(x_1)$, and whose center is the constant value
$$f_1(x_1) + \frac{\mu(x_1)}{||\mu(x_1)||^2}$$ 
along  $\sigma(x_1)$ of the map ${\displaystyle f + \frac{f_*Z}{||Z||^2}}$.
Thus we can parametrize $S(x_1)$ by 
$$\mu(x_1) + t\in (\mu(x_1)+N_{f_1}M_1(x_1))\mapsto f_1(x_1) 
+ \frac{2}{||\mu(x_1) + t||^2}(\mu(x_1) + t),$$
which can be thought of as the restriction to the affine subspace 
$\mu(x_1) \oplus N_{f_1}M_1(x_1)$ of the composition of an inversion with respect to 
a sphere of radius $\sqrt{2}$ centered at the origin and a translation by $f_1(x_1)$. 
Notice that the image of this parametrization misses the point 
$f_1(x_1)=f(\bar{x}_0,x_1) \in S(x_1)$ itself, which is achieved by letting $\|t\|$ 
go to infinity.

For each $x_1\in M_1$, since $f(\sigma(x_1)) \subset S(x_1)$ there exists an immersion 
$h^{x_1}\colon M_0 \to N_{f_1}M_1(x_1)$ such that 
$$
f(x_0,x_1) = f_1(x_1) + \frac{2}{||\mu(x_1) + h^{x_1}(x_0)||^2}(\mu(x_1) + h^{x_1}(x_0)).
$$
Denoting 
$$
\rho(x_0, x_1)=\mu(x_1) + h^{x_1}(x_0),
$$
we may write
\begin{equation}\label{eq:f1}
f = f_1\circ \pi_1 + \frac{2}{||\rho||^2}\rho,
\end{equation}
where $\pi_1\colon M\to M_1$ is the projection.

Given $\theta \in  N_{f_1}M_1(x_1)=f_*E_0(\bar{x}_0,x_1)\oplus N_fM(\bar{x}_0,x_1)$, 
for each $x_0\in M_0$ let
$$
\bar{\theta} = \theta - \frac{2\left<\theta,\rho(x_0, x_1)\right>}{||\rho(x_0, x_1)||^2}\rho(x_0, x_1)	
$$
be the reflection of $\theta$ with respect to the hyperplane orthogonal to the vector $\rho(x_0, x_1)$.
We claim that $\bar{\theta}\in T_{f(x_0,x_1)}S(x_1)=(f_*E_1(x_0, x_1))^\perp$, that is, 
$\langle \bar{\theta}, \gamma\rangle =0$ for all $\gamma\in f_*E_1(x_0,x_1)$.  To prove this, 
consider the decomposition
$$
f_*E_1(x_0,x_1)=(f_*E_1(x_0,x_1)\cap\{f_*Z(x_0,x_1)\}^\perp)\oplus\text{span}\,\{f_*Z(x_0,x_1)\}.
$$
Since the subbundle $f_*E_1\cap\{f_*Z\}^\perp$  is parallel along the leaves of $E_0$ by Lemma \ref{prop:ext1},  
\begin{align*}
f_*E_1(x_0,x_1)\cap\{f_*Z(x_0,x_1)\}^\perp &=f_*E_1(\bar{x}_0,x_1)\cap\{f_*Z(\bar{x}_0,x_1)\}^\perp\\
&=f_1{}_*T_{x_1}M_1\cap \{\mu(x_1)\}^\perp.
\end{align*}
Thus $\langle \bar{\theta},\gamma\rangle =0$
for all $\gamma\in f_*E_1(x_0,x_1)\cap\{f_*Z(x_0,x_1)\}^\perp$. On the other hand,
Lemma \ref{prop:ext1} also says that
$$
f(x_0, x_1)+\frac{f_*Z(x_0, x_1)}{\|Z(x_0, x_1)\|^2}=f_1(x_1)+\frac{\mu(x_1)}{\|\mu(x_1)\|^2}
$$
for all $x_0\in M_0$. Moreover,  $\|Z(x_0, x_1)\|$ also does not depend on $x_0$, 
and hence coincides with $\|\mu(x_1)\|$. Hence, substituting \eqref{eq:f1} 
in the preceding equation gives
$$
f_*Z(x_0, x_1)=\mu(x_1)-\frac{2\|\mu(x_1)\|^2}{\|\rho(x_0, x_1)\|^2}\rho(x_0, x_1).
$$ 
It follows that $\langle \bar{\theta},f_*Z(x_0,x_1)\rangle=0,$
and the claim follows.

Now, differentiating \eqref{eq:f1} at $x = (x_0,x_1)$ gives
\begin{align*}
f_*\tau_0^x{}_*X_0 &= \frac{2}{||\rho||^2}\big(h^{x_1}{}_*X_0
-\frac{2\left<h^{x_1}{}_*X_0,\rho\right>}{||\rho||^2}\rho\big)
\end{align*}
and
\begin{align*}
f_*\tau_1^x{}_*X_1 &= f_1{}_*X_1 + \frac{2}{||\rho||^2}\big(\rho_*\tau_1^x{}_*X_1
-\frac{2\left<\rho_*\tau_1^x{}_*X_1,\rho\right>}{||\rho||^2}\rho\big)
\end{align*}

Given $\theta\in N_{f_1}M_1(x_1)$, $x_0\in M_0$ and $X_1\in T_{x_1}M_1$, endowing $M_1$ with the metric induced by $f_1$  we obtain
$$
\left<\rho_*\tau_1^x{}_*X_1, \theta\right>=\left<Z_1, X_1\right>\left<h^{x_1}(x_0), \theta\right>,
$$
bearing in mind that $\left<\bar{\theta}, f_*\tau_1^x{}_*X_1\right>=0$. Thus
\begin{equation}\label{e:paramcondresult}
\alpha^{f_1}(X_1, Z_1)+\nabla_{X_1}^\perp h(x_0)= \left<Z_1, X_1\right>h^{x_1}(x_0)
\end{equation}
for all $x_0\in M_0$ and $X_1\in T_{x_1}M_1$.

   For a fixed $z_0 \in M_0$, define $\xi^{x_0}=h(x_0)-h(z_0)$. Then
$$
\nabla_{X_1}^\perp \xi^{x_0} = \left<X_1, Z_1\right>\xi^{x_0}
$$
for all $X_1 \in T_{x_1}M_1$. In particular, this implies that
$$
\left<X_1, Z_1\right>=X_1(\log \|\xi^{x_0}\|)
$$
for all $X_1 \in T_{x_1}M_1$, that is, $Z_1=\text{grad}\, \tau$, where
$\tau=\log \|\xi^{x_0}\|$. Hence $e^{-\tau}\xi^{x_0}$ is a parallel
section of $N_{f_1}M_1$ for any $x_0\in M_0$. It follows that the subspaces
$$\mathcal{L}(x_1) = \text{span}\,\{\xi^{x_0}(x_1): x_0 \in M_0\} \subset N_{f_1}M_1(x_1)$$
define a parallel and flat subbundle of $N_{f_1}M_1$.  

Let $\Psi\colon M_1 \times \R^k \to \mathcal{L}$ be a parallel vector bundle isometry.  
Since $e^{-\tau}\xi^{x_0}$ is a parallel section of $\mathcal{L}$, 
there exists $f_0\colon M_0 \to \R^k$ such that $\Psi(f_0(x_0)) = e^{-\tau}\xi^{x_0}$.

Let $\varphi \in C^\infty(M_1)$ be given by $\varphi = - e^{-\tau}$ and let $\beta 
= e^{-\tau}h(z_0) \in \Gamma(N_{f_1}M_1)$.  
Then \eqref{e:paramcondresult} becomes
$$
\alpha^{f_1}(X_1, \text{grad}\,\varphi)+\nabla_{X_1}^\perp \beta=0,
$$
that is, $(\varphi, \beta)\in \mathcal{D}(f_1)$.  Finally, 
\begin{align*}
\mathcal{R}_{(\varphi,\beta + \Psi(f_0(x_0)))}f_1 &= f_1 - 
\frac{2(-e^{-\tau})}{e^{-2\tau}||\mu + h(x_0)||^2}(e^{-\tau}(\mu + h(x_0))) \\
&= f_1 + \frac{2}{||\mu + h(x_0)||^2}(\mu + h(x_0)), \\
&= f,
\end{align*}
thus $f$ is the Ribaucour partial tube determined by $(f_0,f_1,\Psi,\varphi,\beta)$.  
\end{proof}

\begin{remarks}\emph{ $1)$ Theorem\ref{rpt} can be regarded as a conformal counterpart of 
Theorem~$3.5$ in \cite{t4}, which  characterizes partial tubes as the immersions $g\colon M_0\times M_1\to \R^m$ of product manifolds whose induced metrics have  properties $(i)$ and $(ii)$ of Corollary \ref{cor:rpt} and, in addition, the distribution $E_0$ in the product net $\mathcal{E} = (E_0, E_1)$ of $M_0\times M_1$ is totally geodesic. A preliminary step in the proof of that result is the version of Lemma \ref{prop:ext1} which states that, if $g\colon\,M^n\to\R^{m}$ and $h\colon\,L^k\to M^{n}$ are isometric immersions, then  $h$ is totally geodesic and $\alpha_g(h_*X,Z)=0$ for all $X\in \mathfrak{X}(L)$ and $Z\in \Gamma(N_hL)$ if and only if   $g(h(L))$ is contained in an affine subspace of $\R^m$ whose normal space along $g(h(L))$ is $g_*N_hL$. In particular, if $g\colon M_0\times M_1\to \R^m$ is a partial tube, then the image $g(M_0\times \{x_1\})$ by $g$ of each leaf of $E_0$ is contained in an affine subspace of $\R^m$ whose normal space along
$g(M_0\times \{x_1\})$ is $g_*E_1$. Similarly, it follows from Corollary \ref{cor:rpt} and Lemma \ref{prop:ext1} that if  $f\colon M_0\times M_1\to \R^m$ is a Ribaucour partial tube, then the image $f(M_0\times \{x_1\})$ by $f$ of each leaf of $E_0$ is contained in a sphere of $\R^m$ whose normal space along $f(M_0\times \{x_1\})$ is $f_*E_1$. As a consequence, the composition of a partial tube $g\colon M_0\times M_1\to \R^m$ with an inversion $I$ with respect to a hypersphere of $\R^m$ is a Ribaucour partial tube, and a Ribaucour partial tube $f\colon M_0\times M_1\to \R^m$ is given in this way  if and only if the spheres containing the images $f(M_0\times \{x_1\})$ by $f$ of the leaves of $E_0$ all pass through a common point.\vspace{1ex}\\
$2)$ The definition of a Ribaucour partial tube, as well as Proposition \ref{prop:basic},
Corollary \ref{cor:rpt} and Theorem \ref{rpt}, can be easily extended for the case in which the ambient space is any space form $\mathbb{Q}_c^m$, by making use of the extension of the Ribaucour transformation to this setting discussed at the end of Section $2$. The details are left to the reader.}
\end{remarks}

\begin{corollary}\label{cor:sphdist}
Let $M$ be a Riemannian manifold carrying a spherical distribution $D$ 
whose orthogonal distribution $D^\perp$ is integrable. 
Then any isometric immersion  $f\colon M \to \R^m$ whose
second fundamental form  is adapted to the net $\mathcal{E} = (D, D^\perp)$ 
is locally a Ribaucour partial tube over the restriction of $f$ to a leaf of $D^\perp$.	
\end{corollary}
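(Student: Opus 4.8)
The plan is to deduce the statement directly from Theorem~\ref{rpt}, once $M$ has been realized locally as a product manifold whose product net coincides with $(D,D^\perp)$.

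First I would note that a spherical distribution is in particular umbilical, and umbilical distributions are integrable, as recalled just before Corollary~\ref{cor:rpt}. Hence both $D$ and $D^\perp$ are integrable, and $TM=D\oplus D^\perp$ is an orthogonal Whitney sum of two complementary integrable distributions. By the standard consequence of the Frobenius theorem that produces simultaneous product charts for a pair of complementary foliations, every point of $M$ has a neighborhood $U$ (which I may shrink so that the factors are simply connected) admitting a diffeomorphism $U\cong M_0\times M_1$ under which the product net $\mathcal{E}=(E_0,E_1)$ of $M_0\times M_1$ satisfies $E_0=D|_U$ and $E_1=D^\perp|_U$; here $M_0$ and $M_1$ may be taken to be, respectively, an integral leaf of $D$ and an integral leaf of $D^\perp$ through the chosen point.

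Next I would verify that $f|_U\colon M_0\times M_1\to\R^m$ satisfies conditions $(i)$ to $(iii)$ of Corollary~\ref{cor:rpt} relative to this product net. Condition $(i)$ holds because $E_0=D$ and $E_1=D^\perp$ are orthogonal by construction. Condition $(ii)$ is precisely the hypothesis that $\alpha_f$ is adapted to the net $(D,D^\perp)$, which for the two-factor net in question is the requirement that the mixed term $\alpha_f(X,Y)$ vanish for $X\in\Gamma(D)$ and $Y\in\Gamma(D^\perp)$, matching the shape-operator description in the proof of Corollary~\ref{cor:rpt}. Condition $(iii)$ is the hypothesis that $D=E_0$ is spherical.

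Finally, Theorem~\ref{rpt} applies to $f|_U$: fixing any $\bar{x}_0\in M_0$, the map $f_1=f|_U\circ\mu_{\bar{x}_0}$, which is the restriction of $f$ to the leaf $\{\bar{x}_0\}\times M_1$ of $E_1=D^\perp$, is an immersion, and there exist a parallel vector bundle isometry $\Psi$, an immersion $f_0$ and a pair $(\varphi,\beta)\in\mathcal{D}(f_1)$ for which $f|_U$ is the Ribaucour partial tube determined by $(f_0,f_1,\Psi,\varphi,\beta)$. Since $f_1$ is exactly the restriction of $f$ to a leaf of $D^\perp$, this yields the asserted conclusion. The only step requiring genuine care is the local product representation adapting the product net to $(D,D^\perp)$; once that is in place, the corollary is an immediate application of the already-established Theorem~\ref{rpt}.
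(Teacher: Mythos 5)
Your proposal is correct and follows essentially the same route the paper intends: the paper states this corollary without proof as an immediate consequence of Theorem~\ref{rpt}, and the content you supply (integrability of $D$ from sphericity, the local product representation of the orthogonal net $(D,D^\perp)$ via Frobenius, and the verification of conditions $(i)$--$(iii)$ of Corollary~\ref{cor:rpt}) is exactly the implicit argument. Nothing is missing.
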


In particular, Corollary \ref{cor:rpt} and Theorem \ref{rpt} yield the following 
explicit parametrization of any umbilic-free surface with flat normal bundle of a 
space form such that the lines of curvature of one of the two families have constant 
geodesic curvature.

\begin{corollary}\label{cor:surfaces}
Let $\gamma\colon I\to \mathbb{R}^{n+1}$ be a unit-speed curve. 
Let $\xi_1, \ldots, \xi_{n}$ be a parallel orthonormal frame of $N_{\gamma}I$ 
and let $\varphi, \beta_i\in C^{\infty}(I)$, 
$1\leq i\leq n$, satisfy $\beta_i'+\varphi' k_i=0$, 
where $\gamma''=\sum_{i=1}^{n}k_i\xi_i$.
Let $\alpha\colon J\to \mathbb{R}^{n}$ be a  unit-speed curve. 
Then the map $f\colon I\times J\to \mathbb{R}^{n+1}$ given by
\begin{equation}\label{e:paramsurf}
f(s,t)=\gamma(s)-2\varphi(s)\frac{\varphi'(s)\gamma'(s)
+\sum_{i=1}^{n} (\beta_i(s)+\alpha_i(t))\xi_i(s)}{(\varphi'(s))^2
+\sum_{i=1}^{n} (\beta_i(s)+\alpha_i(t))^2}
\end{equation}
parametrizes, at regular points, a surface with flat normal bundle whose 
coordinate curves are lines of curvature and such that the $t$-coordinate 
curves have constant geodesic curvature.

 Conversely, any umbilic-free surface with flat normal bundle whose 
lines of curvature of one family have constant geodesic curvature can be locally parametrized 
in this way.
\end{corollary}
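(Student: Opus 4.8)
The plan is to recognize the parametrization \eqref{e:paramsurf} as a Ribaucour partial tube and read off its geometry from Corollary \ref{cor:rpt}, handling the converse through Corollary \ref{cor:sphdist}. For the direct statement I would set $f_1=\gamma$, take $\mathcal{L}=N_\gamma I$ (of rank $n$, automatically flat since its base $I$ is one-dimensional), let $\Psi_s(y)=\sum_i y_i\xi_i(s)$ be the parallel vector bundle isometry determined by the frame, and put $\beta=\sum_i\beta_i\xi_i$ and $f_0=\alpha$. Using that $s$ is arc length, so $\mathrm{grad}\,\varphi=\varphi'\partial_s$ and $\gamma''=\sum_i k_i\xi_i$, the Combescure equation \eqref{e:combescuresolution} reduces exactly to the stated ODE $\beta_i'+\varphi'k_i=0$, so $(\varphi,\beta)\in\mathcal{D}(\gamma)$; then $\mathcal{F}_{x_0}=\varphi'\gamma'+\sum_i(\beta_i+\alpha_i)\xi_i$ and $\nu_{x_0}=((\varphi')^2+\sum_i(\beta_i+\alpha_i)^2)^{-1}$, whence \eqref{e:ribpartialtube1} coincides with \eqref{e:paramsurf}. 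Thus, at regular points, $f$ is the Ribaucour partial tube determined by $(\alpha,\gamma,\Psi,\varphi,\beta)$.

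It then remains to translate the three conclusions of Corollary \ref{cor:rpt} into the language of surfaces, noting that here $M_0=J$ and $M_1=I$ are one-dimensional, so $E_0$ and $E_1$ are spanned by $\partial_t$ and $\partial_s$ and the leaves of $E_0$ are the $t$-coordinate curves. Item (i) is orthogonality of the coordinate net. Item (ii) together with orthogonality gives $\langle A_\eta\partial_s,\partial_t\rangle=\langle\alpha_f(\partial_s,\partial_t),\eta\rangle=0$ for every normal field $\eta$, so each of $\partial_s,\partial_t$ is an eigenvector of every shape operator; hence the coordinate curves are lines of curvature and, the shape operators being simultaneously diagonalized and therefore commuting, the Ricci equation forces the normal bundle to be flat. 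Item (iii) says $E_0$ is spherical; for a one-dimensional distribution this means its integral curves are extrinsic spheres, which for a curve in a surface is equivalent to having constant geodesic curvature, since the geodesic curvature vector is the mean curvature vector of the leaf and parallelism of that vector in the (line) normal bundle of the leaf in $M$ reduces to constancy of its length.

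For the converse I would start from an umbilic-free surface $f\colon M\to\mathbb{R}^{n+1}$ with flat normal bundle whose lines of curvature of one family have constant geodesic curvature. Flatness makes all shape operators commute, hence simultaneously diagonalizable, and umbilic-freeness yields two distinct principal line fields $D,D^\perp$; I take $D$ to be the family with constant geodesic curvature. Then $D^\perp$ is integrable, being one-dimensional, $D$ is spherical by the equivalence just used, and $\alpha_f(D,D^\perp)=0$ because $D,D^\perp$ are principal, i.e. $\alpha_f$ is adapted to $(D,D^\perp)$. Corollary \ref{cor:sphdist} then exhibits $f$ locally as a Ribaucour partial tube over $\gamma:=f|_L$, for $L$ a leaf of $D^\perp$ parametrized by arc length, with fiber the restriction of $f$ to a leaf of $D$, reparametrized to a unit-speed $\alpha$. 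After completing the flat subbundle $\mathcal{L}$ produced by Theorem \ref{rpt} to the full normal bundle $N_\gamma I$ (padding $\alpha$ with zeros if $\mathcal{L}$ is proper) and recalling that the resulting $(\varphi,\beta)\in\mathcal{D}(\gamma)$ satisfies $\beta_i'+\varphi'k_i=0$, unwinding \eqref{e:ribpartialtube1} as before yields precisely \eqref{e:paramsurf}.

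The differentiation identities are routine and already packaged in Proposition \ref{p:ribdiff} and Corollary \ref{cor:rpt}. The steps requiring genuine care are the two dictionary entries for one-dimensional distributions---that adaptedness plus orthogonality is equivalent to the coordinate curves being lines of curvature with flat normal bundle, and that a one-dimensional distribution is spherical if and only if its leaves have constant geodesic curvature---together with the bookkeeping in the converse (independent arc-length reparametrizations of the two leaves and the enlargement of $\mathcal{L}$ to $N_\gamma I$) needed to bring the abstract Ribaucour partial tube furnished by Corollary \ref{cor:sphdist} into the normalized shape of \eqref{e:paramsurf}.
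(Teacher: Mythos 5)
Your proposal is correct and follows essentially the same route as the paper: identify \eqref{e:paramsurf} as the Ribaucour partial tube over $\gamma$ with fiber $\alpha$ (the Combescure equation \eqref{e:combescuresolution} reducing to the ODEs $\beta_i'+\varphi'k_i=0$), read the direct statement off Corollary \ref{cor:rpt}, and obtain the converse from Theorem \ref{rpt} via Corollary \ref{cor:sphdist}. The extra dictionary you supply --- adaptedness plus orthogonality yielding curvature lines and, through the Ricci equation, flat normal bundle, and sphericity of a rank-one distribution amounting to constant geodesic curvature of its leaves --- is exactly what the paper's terse proof leaves implicit.
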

\begin{proof} That $\xi_{i}$ is parallel along $\gamma$ in the normal connection 
means that there exists $k_i\in C^{\infty}(I)$ such that 
$\xi_{i}'=-k_i\gamma'$. This implies that $\gamma''=\sum_{i=1}^{n}k_i\xi_i$. Therefore, 
for $\varphi\in C^{\infty}(I)$  and $\beta=\sum_{i=1}^{n}\beta_i\xi_i$, Eq. 
\eqref{e:combescuresolution} reduces to the set of ODEs 
$\beta_i'+\varphi' k_i=0$, $1\leq i\leq n$.
Since the map $f\colon I\times J\to \mathbb{R}^{n+1}$ given by
\eqref{e:paramsurf} is the Ribaucour partial tube over $\gamma$ with 
$\alpha\colon J\to \mathbb{R}^{n}$ as fiber, the assertion in the direct
statement is a consequence of Corollary \ref{cor:rpt}, while the converse
follows from Theorem \ref{rpt}.
\end{proof}

   Given an isometric immersion $f\colon M^n\to\tilde{M}^m$, a vector 
$\eta\in N_fM(x)$ at $x\in M^n$ is called a  
\emph{principal  normal} of $f$ at $x$ if the subspace 
$$
E_\eta(x)=\{T\in T_xM:\alpha(T,X)=\left<T,X\right>\eta\;\;\mbox{for all}\;\;X\in T_xM\}
$$
is nontrivial. A normal vector field $\eta\in \Gamma(N_fM)$ is called a
\emph{principal  normal vector field of $f$ with multiplicity $q>0$}
if $E_\eta(x)$ has  dimension $q$ at any point $x\in M^n$.
A principal normal vector field  $\eta\in\Gamma(N_fM)$
is said to be a \emph{Dupin principal normal vector field} if $\eta$ is parallel 
in the normal connection along $E_\eta$.\vspace{1ex}

  As a particular case of Corollary \ref{cor:sphdist}, we recover one of the main results
in \cite{dft}. 

\begin{corollary}\label{dupin} Let $f\colon M^n\to\mathbb{R}^m$ be an isometric immersion 
that carries a Dupin principal normal vector field $\eta$ with multiplicity $q>0$. 
If the subbundle $E_\eta^\perp$ is integrable, then the restriction $\mathcal{N}=E_\eta|_{M_1}$ of $E_\eta$ to any leaf $M_1^{n-q}$ of $E_\eta^\perp$ is a flat parallel  
subbundle of $N_{f_1}M_1$, where $f_1=f|_{M_1}$, and there exist $(\varphi, \beta)\in \mathcal{D}(f_1)$ and a vector bundle isometry $\Psi\colon M_1\times \R^{q}\to \mathcal{N}$ such that $f$ is locally a Ribaucour partial tube determined by $(f_0,f_1,\Psi,\varphi,\beta)$, where $f_0\colon \R^{q}\to \R^{q}$ is the identity map.
\end{corollary}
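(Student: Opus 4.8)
The plan is to show that the distribution $E_\eta$ satisfies the hypotheses of Corollary~\ref{cor:sphdist}, with $D=E_\eta$ and $D^\perp=E_\eta^\perp$, and then to read off the asserted description of the data from Theorem~\ref{rpt}. Three things must be verified: that $E_\eta^\perp$ is integrable, that the second fundamental form $\alpha$ of $f$ is adapted to the net $(E_\eta,E_\eta^\perp)$, and that $E_\eta$ is spherical. The first is exactly the hypothesis. The second is immediate from the principal normal property: for $T\in\Gamma(E_\eta)$ and $X\in\Gamma(E_\eta^\perp)$ one has $\alpha(T,X)=\left<T,X\right>\eta=0$, and more generally $A_\xi T=\left<\xi,\eta\right>T$ for every $\xi\in\Gamma(N_fM)$; in particular $A_\eta|_{E_\eta}=\|\eta\|^2\,\text{Id}$.

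The substantive step is to prove that $E_\eta$ is spherical, and this is where the Dupin hypothesis $\nabla^\perp_T\eta=0$ (for $T\in\Gamma(E_\eta)$) enters. First I would establish umbilicity of $E_\eta$. Feeding $T,S\in\Gamma(E_\eta)$ and $X\in\Gamma(E_\eta^\perp)$ into the Codazzi equation $(\nabla_X\alpha)(T,S)=(\nabla_T\alpha)(X,S)$ and using $\alpha(T,X)=0$ together with $\alpha(T,S)=\left<T,S\right>\eta$, the $\eta$-terms cancel and one is left with a relation of the form
\[
\left<T,S\right>\nabla^\perp_X\eta=\left<X,(\nabla_T S)_{E_\eta^\perp}\right>\eta-\alpha\big(X,(\nabla_T S)_{E_\eta^\perp}\big).
\]
Comparing this identity for two orthonormal choices of $T=S$ and using that $\eta$ is a principal normal (so that a vector $V\in E_\eta^\perp$ satisfying $\alpha(X,V)=\left<X,V\right>\eta$ for all $X\in E_\eta^\perp$ would have to lie in $E_\eta$, hence vanish) forces $(\nabla_T S)_{E_\eta^\perp}=\left<T,S\right>Z$ for a well-defined $Z\in\Gamma(E_\eta^\perp)$; that is, $E_\eta$ is umbilical, in particular integrable, with mean curvature vector $Z$. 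To upgrade umbilical to spherical I would use the Dupin condition once more: differentiating $A_\eta|_{E_\eta}=\|\eta\|^2\text{Id}$ along $E_\eta$ by means of the Codazzi equation for the shape operator, with $\nabla^\perp_T\eta=0$, yields $(\nabla_T Z)_{E_\eta^\perp}=0$. Equivalently, one computes $\bar\nabla_T(\eta+f_*Z)$ for $T\in\Gamma(E_\eta)$ and finds that its component normal to the leaf of $E_\eta$ equals $f_*(\nabla_T Z)_{E_\eta^\perp}$, so that sphericity of $E_\eta$ is the same as each leaf of $E_\eta$ being an extrinsic sphere of $\R^m$, which is the classical geometric meaning of the Dupin condition.

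With these three properties in hand, Corollary~\ref{cor:sphdist} applies: for a leaf $M_1^{\,n-q}$ of $E_\eta^\perp$ and $f_1=f|_{M_1}$, the immersion $f$ is locally the Ribaucour partial tube over $f_1$ determined by some $(f_0,f_1,\Psi,\varphi,\beta)$. Theorem~\ref{rpt} then supplies the remaining structure: the restriction $\mathcal{N}=E_\eta|_{M_1}$, viewed inside $N_{f_1}M_1$ via the orthogonal splitting $N_{f_1}M_1=f_*E_\eta\oplus N_fM$ (valid by adaptedness), is a flat parallel subbundle of $N_{f_1}M_1$, and $\Psi\colon M_1\times\R^q\to\mathcal{N}$ together with $(\varphi,\beta)\in\mathcal{D}(f_1)$ are the objects constructed there.

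It remains to see that the fiber may be taken to be the identity. Here the point is that $\text{rank}\,\mathcal{N}=\text{rank}\,E_\eta=q$ coincides with the dimension of the leaves $M_0$ of $E_\eta$. Thus $f_0\colon M_0^{\,q}\to\R^q$ is an immersion of equal dimensions, hence a local diffeomorphism onto an open subset of $\R^q$ with trivial normal bundle; identifying $M_0$ with its image, we may take $f_0=\text{id}$. This is consistent with Proposition~\ref{prop:basic}: since $N_{f_0}M_0=0$, the normal bundle of $f$ reduces to $\mathcal{P}(\mathcal{L}^\perp)$, and the induced metric $4\nu_{x_0}^2\varphi^2\left<\cdot,\cdot\right>_{f_0}$ on the $M_0$-factor is conformal to the flat metric, exactly as it must be for a round-sphere leaf. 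The main obstacle is the sphericity step of the second paragraph: extracting $(\nabla_T Z)_{E_\eta^\perp}=0$ cleanly from the Codazzi equation and the Dupin condition is the one place where a genuine computation is required, the remaining steps being either hypotheses, immediate consequences of the principal normal property, or direct invocations of Corollary~\ref{cor:sphdist} and Theorem~\ref{rpt}.
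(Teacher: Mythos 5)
Your overall route is the same as the paper's: the corollary is obtained by checking that $E_\eta$ and $E_\eta^\perp$ satisfy the hypotheses of Corollary \ref{cor:sphdist} and then reading off the data from Theorem \ref{rpt}, with $f_0=\mathrm{id}$ coming from the fact that $\mathrm{rank}\,\mathcal{N}=q$ equals the dimension of the leaves of $E_\eta$. The adaptedness and integrability checks are fine, and your Codazzi derivation of umbilicity of $E_\eta$ is correct (for $q=1$ umbilicity is automatic for a rank-one distribution, and the same Codazzi identity still yields $\nabla^\perp_X\eta=\langle X,Z\rangle\eta-\alpha(X,Z)$ for all $X\in\Gamma(E_\eta^\perp)$).

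The one place where your argument is not yet a proof is the sphericality step, and the ``equivalently'' justification you offer there is circular: your computation of $\bar\nabla_T(\eta+f_*Z)$ shows that, \emph{given} $\nabla^\perp_T\eta=0$, sphericality of $E_\eta$ is equivalent to the leaves of $E_\eta$ being extrinsic spheres of $\R^m$; declaring the latter to be ``the classical geometric meaning of the Dupin condition'' assumes exactly what must be shown, since the paper's definition of Dupin is only $\nabla^\perp_T\eta=0$ along $E_\eta$. The implication is true, but it needs one more turn of the crank. From the umbilicity computation you already have $(\ast)$: $\nabla^\perp_X\eta=\langle X,Z\rangle\eta-\alpha(X,Z)$ for $X\in\Gamma(E_\eta^\perp)$. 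Now differentiate $(\ast)$ along $T\in\Gamma(E_\eta)$: the Ricci equation gives $R^\perp(T,X)\eta=\alpha(T,A_\eta X)-\alpha(A_\eta T,X)=0$, so with $\nabla^\perp_T\eta=0$ one gets $\nabla^\perp_T\nabla^\perp_X\eta=\nabla^\perp_{[T,X]}\eta$; expanding both sides using $(\ast)$ and the Codazzi identity $(\nabla^\perp_T\alpha)(X,Z)=(\nabla^\perp_X\alpha)(T,Z)$, every term cancels except $\langle X,\nabla_TZ\rangle\eta-\alpha(X,\nabla_TZ)=0$, valid for every $X\in\Gamma(E_\eta^\perp)$. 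Since this identity holds trivially for $X\in\Gamma(E_\eta)$, it holds for all $X$, hence $\nabla_TZ\in E_\eta$ by the very definition of $E_\eta$, i.e.\ $(\nabla_TZ)_{E_\eta^\perp}=0$. (Alternatively, this is the standard fact recorded as Proposition 1.22 of \cite{dt0} and could simply be cited, which is in effect what the paper does by offering no proof at all.) With that supplied, the rest of your argument goes through.
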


  Corollary \ref{dupin} yields as a special case the following description of all \emph{channel hypersurfaces}, that is, hypersurfaces $f\colon M^n\to\mathbb{R}^{n+1}$, $n\geq 2$, carrying a principal curvature $\lambda$ with  multiplicity $n-1$, which is constant along the correspondent lines of curvature if $n=2$ ($\lambda$ is automatically constant along the leaves of its eigendistribution if $n\geq 3$). If $n\geq 4$, these are precisely the  conformally flat hypersurfaces of $\mathbb{R}^{n+1}$ (see, e.g., Theorem $9.6$ in \cite{dt0} for an alternative description based on the conformal Gauss parametrization).
 
\begin{corollary}\label{cor:channel}
Let $f\colon M^n\to\mathbb{R}^{n+1}$, $n\geq 2$, be a hypersurface carrying a principal curvature $\lambda$ with multiplicity $n-1$, which is constant along the correspondent lines of curvature if $n=2$. Then the restriction $\mathcal{N}=E_\lambda|_{M_1}$ of the eigendistribution $E_\lambda$ to any integral curve $M_1$ of $E_\lambda^\perp$ 
 is a flat parallel subbundle of $N_{f_1}M_1$, where $f_1=f|_{M_1}$, and there exist $(\varphi, \beta)\in \mathcal{D}(f_1)$ and a vector bundle isometry $\Psi\colon M_1\times \R^{n-1}\to \mathcal{N}$ such that $f$ is locally a Ribaucour partial tube determined by $(f_0,f_1,\Psi,\varphi,\beta)$, where $f_0\colon \R^{n-1}\to \R^{n-1}$ is the identity map.
\end{corollary}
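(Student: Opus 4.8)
The plan is to obtain this as a direct specialization of Corollary~\ref{dupin}, the only task being to exhibit a Dupin principal normal vector field of the right multiplicity. Let $N$ be a local unit normal field of $f\colon M^n\to\R^{n+1}$ and let $A$ be the associated shape operator, so that the second fundamental form is $\alpha(X,Y)=\langle AX,Y\rangle N$. Setting $\eta=\lambda N$, for $X,Y\in E_\lambda$ we have $\alpha(X,Y)=\langle AX,Y\rangle N=\lambda\langle X,Y\rangle N=\langle X,Y\rangle\eta$, and conversely the condition $\alpha(T,X)=\langle T,X\rangle\eta$ for all $X$ forces $AT=\lambda T$. Hence $E_\eta=E_\lambda$, so $\eta$ is a principal normal vector field of multiplicity $q=n-1>0$.

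It remains to check that $\eta$ is Dupin, i.e.\ that $\nabla^\perp_T\eta=0$ for every $T\in E_\eta=E_\lambda$. Since $N_fM$ is the line bundle spanned by the unit field $N$, the Weingarten formula $\bar\nabla_X N=-f_*AX$ shows that $\bar\nabla_X N$ is purely tangential, whence $\nabla^\perp_X N=0$ and $\nabla^\perp_T\eta=T(\lambda)N$. Thus the Dupin condition is equivalent to $\lambda$ being constant along the leaves of $E_\lambda$. For $n=2$ this is exactly the hypothesis, while for $n\geq 3$ it follows from the Codazzi equation: taking $X,Y\in\Gamma(E_\lambda)$ in $(\nabla_X A)Y=(\nabla_Y A)X$ and using $AX=\lambda X$, $AY=\lambda Y$ yields
$$
(X\lambda)Y-(Y\lambda)X+(\lambda I-A)[X,Y]=0.
$$
Because $A$ leaves $E_\lambda^\perp$ invariant, the last term lies in $E_\lambda^\perp$, so projecting onto $E_\lambda$ gives $(X\lambda)Y=(Y\lambda)X$; as $\dim E_\lambda=n-1\geq 2$, choosing $X,Y$ pointwise linearly independent forces $X\lambda=Y\lambda=0$. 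Hence $\lambda$ is constant along $E_\lambda$ in all cases, and $\eta$ is Dupin.

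Finally, $E_\eta^\perp=E_\lambda^\perp$ has rank $n-(n-1)=1$, so it is automatically integrable and its leaves $M_1$ are the integral curves of $E_\lambda^\perp$. All hypotheses of Corollary~\ref{dupin} are therefore satisfied with $q=n-1$ and $m=n+1$, and its conclusion gives precisely the asserted statement, with fiber $f_0\colon\R^{n-1}\to\R^{n-1}$ the identity map. Since the argument is a formal verification of the hypotheses of Corollary~\ref{dupin}, the only genuinely computational step is the Codazzi identity establishing the Dupin condition for $n\geq 3$; I anticipate no obstacle beyond this classical fact.
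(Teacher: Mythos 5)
Your proof is correct and follows exactly the route the paper intends: the paper states Corollary \ref{cor:channel} as an immediate special case of Corollary \ref{dupin}, and your argument simply supplies the standard verification that $\eta=\lambda N$ is a Dupin principal normal of multiplicity $n-1$ (via the Codazzi equation for $n\geq 3$, or by hypothesis for $n=2$) with $E_\eta^\perp$ of rank one, hence integrable. No gaps.
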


 Ribaucour partial tubes also provide a parametrization of all foliations of a space form
by spherical submanifolds whose orthogonal distributions are integrable, in particular
of all foliations of a space form whose leaves are spherical hypersurfaces. 

\begin{corollary}\label{rptdiffeo} Let $f\colon M^m \to \mathbb{Q}_c^m$ be a local 
diffeomorphism of a product manifold $M^m = M^{m_0}_0\times M^{m_1}_1$ such that the product net 
$\mathcal{E} = (E_0, E_1)$ of $M^m$ is orthogonal and  $E_0$ is  spherical 
 with respect to the metric induced by $f$. Then, for any fixed $\bar{x}_0\in M_0$, the map $f_1\colon M_1\to \R^m$ given by $f_1=f\circ \mu_{\bar{x}_0}$ is an immersion  with flat normal bundle and there exist a parallel vector bundle isometry 
$\Psi\colon M^{m_1}_1 \times \R^{m_0} \to N_{f_1}M_1$, a local isometry 
$f_0\colon M_0\to \R^{m_0}$ and $(\varphi,\beta) \in \mathcal{D}(f_1)$ such that $f$ is 
the Ribaucour partial tube determined by $(f_0, f_1, \Psi, \varphi, \beta)$.
\end{corollary}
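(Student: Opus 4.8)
The plan is to read this off from Theorem~\ref{rpt} (in its space form version indicated in the remarks following that theorem), the only real work being to verify the hypotheses of Corollary~\ref{cor:rpt} and then to sharpen the conclusions using that $f$ has codimension zero. First I would observe that, since $f\colon M^m\to\mathbb{Q}_c^m$ is a local diffeomorphism, it is an isometric immersion of codimension zero; its normal bundle in $\mathbb{Q}_c^m$ is trivial, so its second fundamental form vanishes identically and is in particular adapted to the product net $\mathcal{E}=(E_0,E_1)$. Equivalently, working in $\R_\epsilon^{m+1}$ as in the space form extension, the composition $F=i\circ f$ with the umbilical inclusion $i\colon\mathbb{Q}_c^m\to\R_\epsilon^{m+1}$ is umbilical, and umbilicity together with the orthogonality of $\mathcal{E}$ with respect to the induced metric forces $\alpha_F(X_0,X_1)=0$ for $X_0\in\Gamma(E_0)$, $X_1\in\Gamma(E_1)$, so $\alpha_F$ is adapted to $\mathcal{E}$. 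Thus conditions $(i)$ and $(ii)$ of Corollary~\ref{cor:rpt} hold, condition $(iii)$ being a hypothesis, and Theorem~\ref{rpt} applies: for the fixed $\bar{x}_0$, the map $f_1=f\circ\mu_{\bar{x}_0}$ is an immersion, $N_{f_1}M_1$ carries a parallel flat subbundle $\mathcal{L}$ of some rank $k$, and $f$ is the Ribaucour partial tube determined by $(f_0,f_1,\Psi,\varphi,\beta)$ with $\Psi\colon M_1\times\R^k\to\mathcal{L}$ and $f_0\colon M_0\to\R^k$.

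The heart of the argument is a dimension count that upgrades these data to the stated form. Since $f$ is a local isometry and $\mathcal{E}$ is orthogonal with respect to the induced metric, at each $x=(x_0,x_1)$ one has the orthogonal splitting $T_{f(x)}\mathbb{Q}_c^m=f_*E_0(x)\oplus f_*E_1(x)$, whence $\dim f_*E_0(x)+\dim f_*E_1(x)=m=m_0+m_1$. As $\dim f_*E_i(x)\leq m_i$, both summands must attain maximal rank. Using the differential formulas of Proposition~\ref{p:ribdiff}, namely $f_*\tau_0^x{}_*X_0=-2\nu_{x_0}\varphi\,\mathcal{P}_{x_0}(\Psi_{x_1}(f_0{}_*X_0))$ and $f_*\tau_1^x{}_*X_1=\mathcal{P}_{x_0}f_1{}_*D_{x_0}X_1$, and recalling that $\mathcal{P}_{x_0}$ and $\Psi_{x_1}$ are linear isometries and that $\Psi_{x_1}$ takes values in $\mathcal{L}(x_1)\subseteq N_{f_1}M_1(x_1)$, I would estimate
$$
m_0=\dim f_*E_0(x)=\dim\Psi_{x_1}(f_0{}_*T_{x_0}M_0)\leq k\leq \operatorname{rank}N_{f_1}M_1=m-m_1=m_0.
$$
Hence all inequalities are equalities: $k=m_0$, so $\mathcal{L}=N_{f_1}M_1$ is the full normal bundle of $f_1$ and is therefore flat, and $\Psi\colon M_1\times\R^{m_0}\to N_{f_1}M_1$; moreover $f_0{}_*$ has rank $m_0$ at every point, so $f_0\colon M_0^{m_0}\to\R^{m_0}$ is a local diffeomorphism, that is, a local isometry for the metric it induces on $M_0$.

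I expect the only genuinely delicate point to be this last dimension count, and specifically the observation that codimension zero of $f$ is exactly what forces $k$ to attain its maximal value $m_0$; by contrast, the general Theorem~\ref{rpt} yields only an immersion $f_0$ into a possibly larger $\R^k$ together with a proper subbundle $\mathcal{L}\subsetneq N_{f_1}M_1$. A secondary point requiring care is the verification of condition $(ii)$ in the space form setting, where one argues with $F=i\circ f$ rather than with $f$ itself; once the adaptedness of $\alpha_F$ to $\mathcal{E}$ is confirmed, the space form analogue of Theorem~\ref{rpt} carries the remainder of the proof without change, and the flatness of $N_{f_1}M_1$ is immediate from the identity $\mathcal{L}=N_{f_1}M_1$ established above.
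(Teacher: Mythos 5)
Your proposal is correct and follows exactly the route the paper intends: the corollary is stated without proof as a direct consequence of Theorem \ref{rpt} (in its space form version), with the codimension-zero hypothesis supplying condition $(ii)$ of Corollary \ref{cor:rpt} for free and forcing, via the dimension count you carry out, that $\mathcal{L}=N_{f_1}M_1$ has rank $m_0$ and that $f_0$ is equidimensional, hence a local isometry. Your identification of the dimension count as the one genuinely new step, and your verification of adaptedness via $F=i\circ f$, fill in precisely the details the paper leaves to the reader.
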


\section{Hypersurfaces of space forms carrying a spherical foliation}

   In this section we derive some consequences of Theorem \ref{rpt} for 
hypersurfaces of space forms, in particular we provide a complete solution of 
the problem that was the initial motivation of this work.   

 A smooth  distribution $D$ on a Riemannian manifold 
$M^n$ is said to be \emph{curvature invariant} if 
$R(X,Y)Z \in \Gamma(D)$ for all $X,Y,Z\in \Gamma(D)$,
where $R$ denotes the curvature tensor of $M^n$.  A basic observation for us is the 
following fact.

\begin{lemma} \label{le:sphinv}
Any spherical distribution on a Riemannian manifold is curvature invariant.
\end{lemma}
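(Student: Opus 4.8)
The plan is to fix a local mean curvature vector field $Z\in\Gamma(D^\perp)$ witnessing the spherical condition, so that $(\nabla_T S)_{D^\perp}=\langle T,S\rangle Z$ and $(\nabla_T Z)_{D^\perp}=0$ for all $T,S\in\Gamma(D)$, and then to show directly that $(R(X,Y)W)_{D^\perp}=0$ for all $X,Y,W\in\Gamma(D)$, where $(\,\cdot\,)_{D^\perp}$ denotes orthogonal projection onto $D^\perp$. Since $R(X,Y)W$ is automatically a section of $TM$, it suffices to kill its $D^\perp$-component.

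First I would record the preliminary observation that $D$ is integrable: applying the umbilicity relation to $(\nabla_XY)_{D^\perp}$ and $(\nabla_YX)_{D^\perp}$ and subtracting, the symmetry $\langle X,Y\rangle=\langle Y,X\rangle$ forces $[X,Y]_{D^\perp}=(\nabla_XY-\nabla_YX)_{D^\perp}=0$, so that $[X,Y]\in\Gamma(D)$. This is the integrability noted after the definition of umbilical distribution, and it is what makes the bracket term tractable below.

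Next I would compute the $D^\perp$-component of each summand of $R(X,Y)W=\nabla_X\nabla_YW-\nabla_Y\nabla_XW-\nabla_{[X,Y]}W$. Writing $\nabla_YW=(\nabla_YW)_D+\langle Y,W\rangle Z$ and differentiating, the projection of $\nabla_X\nabla_YW$ onto $D^\perp$ splits into three pieces: the umbilicity relation applied to $(\nabla_YW)_D\in\Gamma(D)$ produces $\langle X,\nabla_YW\rangle Z$; the term $X\langle Y,W\rangle\,Z$ survives untouched; and the term $\langle Y,W\rangle(\nabla_XZ)_{D^\perp}$ vanishes precisely by the spherical condition. Expanding $X\langle Y,W\rangle$ by compatibility of the metric, subtracting the analogous expression for $\nabla_Y\nabla_XW$, and using that the torsion-free connection gives $\nabla_XY-\nabla_YX=[X,Y]$, all the symmetric contributions cancel and one is left with $\langle[X,Y],W\rangle Z$. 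Finally, since $[X,Y]\in\Gamma(D)$ by the integrability established above, the umbilicity relation gives $(\nabla_{[X,Y]}W)_{D^\perp}=\langle[X,Y],W\rangle Z$, which cancels the previous expression and yields $(R(X,Y)W)_{D^\perp}=0$.

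The only delicate point is the treatment of $\nabla_{[X,Y]}W$: a priori $[X,Y]$ could have a nonzero $D^\perp$-component, in which case $(\nabla_{[X,Y]}W)_{D^\perp}$ would not be governed by the umbilicity relation, which only controls covariant derivatives taken along directions tangent to $D$. The integrability of $D$ obtained in the first step is exactly what removes this obstruction, while the spherical (as opposed to merely umbilical) hypothesis is what is needed to annihilate the $(\nabla_XZ)_{D^\perp}$ contribution in the second step; dropping either hypothesis leaves a nonvanishing residual term.
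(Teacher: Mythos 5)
Your proof is correct and follows essentially the same route as the paper's: both expand $R(X,Y)W$ directly and use the umbilic identity $(\nabla_T S)_{D^\perp}=\left<T,S\right>Z$ together with the spherical identity $(\nabla_T Z)_{D^\perp}=0$ to make all $D^\perp$-components cancel. The only difference is organizational: you compute the full projection onto $D^\perp$ in one pass, whereas the paper tests $R(T,S)U$ separately against $Z$ and against vectors of $D^\perp$ orthogonal to $Z$; your version also makes explicit the role of the integrability of $D$ in handling the $\nabla_{[X,Y]}W$ term.
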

\begin{proof} Let $D$ be a spherical distribution on a Riemannian manifold $M^n$
with mean curvature vector field $Z$. 
Given $T, S, U\in \Gamma(D)$ and 
$X\in \Gamma(D^\perp)$ with $\left<X, Z\right>=0$, since
$\left<\nabla_S U, X\right>=0$ and $\nabla_T X=0$ by \eqref{e:defumbilical}
and \eqref{e:defspherical}, respectively, it follows that $\left<R(T,S)U,X\right>=0$. 
On the other hand, again from  \eqref{e:defumbilical}
and \eqref{e:defspherical} we obtain
\begin{align*} \left<\nabla_T\nabla_S U, Z\right>&
=T(\left<S,U\right>)\|Z\|^2-\left<\nabla_S U,\nabla_T Z\right>\\
&=(T(\left<S,U\right>)+\left<\nabla_S U,T \right>)\|Z\|^2.
\end{align*}
Similarly,
$$
\left<\nabla_S\nabla_T U, Z\right>=S(\left<T,U\right>)+\left<\nabla_T U,S \right>)\|Z\|^2
$$
and
$$
\left<\nabla_{[T,S]} U, Z\right>=\left<[T,S],U\right>\|Z\|^2.
$$
Subtracting the last two equations from the first one gives $\left<R(T,S)U,Z\right>=0$.
\end{proof}
 
For an oriented  hypersurface $f\colon M^{n}\to\mathbb{Q}_c^{n+1}$, 
we  denote by $A$ its shape operator with respect to the Gauss map $N$ and by 
$\Delta=\ker A$ its relative nullity distribution. 
The next algebraic lemma was proved in \cite{drt}.

\begin{lemma}\label{prop:curv}
Let $f\colon M^{n}\to\mathbb{Q}_c^{n+1}$ be an oriented hypersurface carrying 
a curvature invariant distribution  $D$ of rank $k$.
Then one of the following holds pointwise:
\begin{itemize}
\item[(i)] $A(D)\subset D^\perp$,
\item[(ii)] $A(D)\subset D$,
\item[(iii)] $\text{rank}\, D\cap \Delta=k-1$.
\end{itemize}
\end{lemma}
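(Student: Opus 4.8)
The plan is to fix a point $x\in M^n$ and turn the statement into linear algebra on the subspace $D=D(x)\subset T_xM$ of dimension $k$, using the Gauss equation to convert curvature invariance into a condition on the shape operator. First I would record the Gauss equation of $f\colon M^n\to\mathbb{Q}_c^{n+1}$,
\[
R(X,Y)Z=c(\langle Y,Z\rangle X-\langle X,Z\rangle Y)+\langle AY,Z\rangle AX-\langle AX,Z\rangle AY,
\]
valid for all $X,Y,Z\in T_xM$. For $X,Y,Z\in D$ the term $c(\langle Y,Z\rangle X-\langle X,Z\rangle Y)$ already lies in $D$, so curvature invariance of $D$ is equivalent to the requirement that $\langle AY,Z\rangle AX-\langle AX,Z\rangle AY\in D$ for all $X,Y,Z\in D$.

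Next I would decompose the shape operator along the orthogonal splitting $T_xM=D\oplus D^\perp$, writing $PX=(AX)_D$ and $BX=(AX)_{D^\perp}$ for $X\in D$. Since $A$ is symmetric and $Z\in D$ one has $\langle AX,Z\rangle=\langle PX,Z\rangle=\langle X,PZ\rangle$, and $P$ is symmetric on $D$. Taking the $D^\perp$-component of the membership condition above yields the purely algebraic identity
\[
\langle PY,Z\rangle\,BX=\langle PX,Z\rangle\,BY\qquad\text{for all }X,Y,Z\in D,
\]
which I will call $(\ast)$. The entire trichotomy then follows from a rank analysis of $P$ and $B$ dictated by $(\ast)$.

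The case split is as follows. If $P=0$, then $AX=BX\in D^\perp$ for every $X\in D$, so $A(D)\subseteq D^\perp$ and we are in alternative (i). Suppose now $P\neq0$ and choose $Z$ with $PZ\neq0$; the linear functional $\ell_Z=\langle\,\cdot\,,PZ\rangle$ is then nonzero on $D$, and $(\ast)$ becomes $\ell_Z(Y)BX=\ell_Z(X)BY$. If $B=0$ then $A(D)\subseteq D$, which is alternative (ii). If $B\neq0$, then fixing $Y_0$ with $\ell_Z(Y_0)\neq0$ gives $BX=\ell_Z(X)\,\ell_Z(Y_0)^{-1}BY_0$, so $B$ has rank one with $\ker B=\ker\ell_Z$. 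Since $\ker B$ is a single fixed subspace while $\ker\ell_Z=(PZ)^\perp\cap D$, the equality $\ker\ell_Z=\ker B$ must hold for every admissible $Z$; this forces all vectors $PZ$ to lie on a line, so $P$ too has rank one, and by symmetry of $P$ its kernel equals $(\text{Im}\,P)^\perp\cap D=\ker B$, a subspace of dimension $k-1$. On this common kernel $AX=PX+BX=0$, hence $\ker P\subseteq D\cap\Delta$ and $\text{rank}(D\cap\Delta)\geq k-1$; the value $\text{rank}(D\cap\Delta)=k$ would give $A|_D=0$ and hence $P=0$, a contradiction, so $\text{rank}(D\cap\Delta)=k-1$, which is alternative (iii).

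The only genuinely delicate step is the last one: deducing from $(\ast)$, when $B\neq0$, that $P$ itself must have rank one with the same kernel as $B$. The key is that $\ker B$ does not depend on the auxiliary vector $Z$, so the hyperplanes $(PZ)^\perp\cap D$ all coincide and the image of $P$ is forced to be a line; once this is in hand, identifying $\ker P$ with $\ker B$ via symmetry of $P$ and reading off $D\cap\Delta$ is routine. All remaining steps are bookkeeping with the Gauss equation and the orthogonal decomposition of $A$, and the argument is manifestly pointwise, as the statement requires.
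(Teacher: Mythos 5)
Your proof is correct: the reduction via the Gauss equation to the pointwise identity $\langle PY,Z\rangle BX=\langle PX,Z\rangle BY$ for the components $PX=(AX)_D$, $BX=(AX)_{D^\perp}$, and the subsequent rank analysis (including the key observation that $\ker B$ is independent of the auxiliary vector $Z$, forcing $\text{Im}\,P$ onto a line and, by symmetry of $P$, $\ker P=\ker B$ of dimension $k-1$) all check out, and the three cases $P=0$, $B=0$, and $P\neq0\neq B$ exhaust the possibilities. The paper itself gives no proof of this lemma but merely cites \cite{drt}; your argument is the natural one and is essentially the proof given there.
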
 

   The two preceding lemmas have the following consequence for hypersurfaces of 
dimension $n\geq 3$ that carry a spherical foliation of high rank.

\begin{corollary} \label{cor:sphinv}
If $f\colon M^{n}\to\mathbb{Q}_c^{n+1}$, $n\geq 3$, is an oriented hypersurface 
and $D$ is a spherical distribution on $M^n$ of rank $k>n/2$ that is not totally 
geodesic on any open subset, then $D$ is invariant by the shape operator of $f$. 
\end{corollary}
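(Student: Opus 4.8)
The plan is to combine Lemma~\ref{le:sphinv} and Lemma~\ref{prop:curv} and then rule out the two unwanted alternatives using the rank hypothesis $k > n/2$ together with the assumption that $D$ is nowhere totally geodesic. Since $D$ is spherical, Lemma~\ref{le:sphinv} tells us that $D$ is curvature invariant, so Lemma~\ref{prop:curv} applies and at each point one of the three alternatives (i), (ii), (iii) holds. The goal is exactly to show that (ii) holds everywhere, so I would argue that (i) and (iii) can occur only on sets where $D$ is forced to be totally geodesic, which is excluded by hypothesis.

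First I would dispose of alternative (iii). If $\operatorname{rank}(D\cap\Delta)=k-1$ at a point, then since $\Delta=\ker A$, the subspace $D\cap\Delta$ consists of directions on which $A$ vanishes; in particular $A(D\cap\Delta)=0\subset D$. The remaining one-dimensional complement of $D\cap\Delta$ inside $D$ is the only place where $A$ can move $D$ out of itself. The key quantitative point is that in case (iii) one essentially has $A|_D$ of rank at most one, and for a hypersurface this severely restricts the second fundamental form restricted to $D$; I expect that, combined with the sphericity of $D$ (which via the umbilicity condition \eqref{e:defumbilical} and Lemma~\ref{prop:ext1} ties the normal part of $\nabla_T S$ to the mean curvature vector), case (iii) forces $D$ to be totally geodesic on the open set where it persists, contradicting the hypothesis. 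So (iii) can hold only on a set with empty interior.

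Next I would handle alternative (i), where $A(D)\subset D^\perp$. This is where the rank condition $k>n/2$ does the decisive work. If $A(D)\subset D^\perp$, then $A$ maps the $k$-dimensional subspace $D$ into the $(n-k)$-dimensional subspace $D^\perp$; since $A$ is symmetric, it also maps $D^\perp$ into $D$. Because $A$ is self-adjoint and interchanges $D$ and $D^\perp$, the restriction $A|_D\colon D\to D^\perp$ has a kernel of dimension at least $k-(n-k)=2k-n>0$ precisely when $k>n/2$. That kernel lies in $\Delta$, and the self-adjoint off-diagonal structure of $A$ then forces a large relative nullity; pushing this through, $A$ must annihilate a subspace of $D$ of positive dimension, and the symmetric interchange together with $k>n/2$ ultimately collapses $A|_D$ to zero, i.e.\ $D\subset\Delta$. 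But $D\subset\Delta$ means $\alpha_f(T,S)=0$ for all $T,S\in\Gamma(D)$, and combined with sphericity (the mean curvature vector of $D$ then lies tangent to $M$ and the umbilicity forces the leaves to be totally geodesic in $M$) this again makes $D$ totally geodesic, contrary to hypothesis.

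Thus on the open dense set where $D$ is not totally geodesic, neither (i) nor (iii) can hold, leaving (ii): $A(D)\subset D$, i.e.\ $D$ is invariant under the shape operator. By continuity and the density of this open set, invariance holds everywhere. The main obstacle I anticipate is the careful bookkeeping in alternative (i): one must use the self-adjointness of $A$ to deduce the dimension count $2k-n$ for the kernel, and then genuinely show that the interchange property forces $A|_D\equiv 0$ rather than merely a partial nullity. The link from $D\subset\Delta$ (or from the rank-one picture in case (iii)) back to ``$D$ totally geodesic'' must be made precise: this is where I would invoke the sphericity hypothesis most essentially, since a spherical distribution whose leaves are totally geodesic in the ambient hypersurface (because $\alpha_f$ vanishes on it) is totally geodesic as a distribution on $M^n$. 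Getting these two implications airtight is the crux; the rest is a clean case analysis driven by the numerology $k>n/2$.
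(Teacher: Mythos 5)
Your overall skeleton matches the paper's: curvature invariance of $D$ via Lemma \ref{le:sphinv}, the pointwise trichotomy of Lemma \ref{prop:curv}, and the elimination of alternatives $(i)$ and $(iii)$. But the way you eliminate them contains a genuine gap. The paper does \emph{not} show that $(i)$ or $(iii)$ would force $D$ to be totally geodesic. It first reduces, using the closedness of the set where $D$ is $A$-invariant together with the hypothesis that $D$ is totally geodesic on no open subset, to the case where the mean curvature vector field $Z$ of $D$ never vanishes, and then observes that on that set $D\cap\Delta$ must be trivial, because $\Delta$ is a totally geodesic distribution while $D$ is umbilical with $Z\neq 0$. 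Once $D\cap\Delta=0$ is in hand, alternative $(iii)$ dies immediately (since $k-1>0$) and alternative $(i)$ dies by exactly the dimension count you wrote down: under $(i)$ one has $\ker(A|_{D})=D\cap\Delta$ and $\dim\ker(A|_{D})\geq k-(n-k)=2k-n>0$. You never establish $D\cap\Delta=0$, and the substitute implications you propose in its place do not hold.

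Concretely: in case $(i)$ you claim that the symmetric off-diagonal structure ``collapses $A|_D$ to zero.'' It does not: with $A(D)\subset D^\perp$ the restriction $A|_D$ can have rank anywhere up to $n-k$, and $k>n/2$ only yields a positive-dimensional kernel, not a trivial image. Moreover, even granting $D\subset\Delta$, your final step ``$\alpha_f$ vanishes on $D$, hence $D$ is totally geodesic'' conflates the extrinsic geometry of $f$ with the intrinsic geometry of the distribution $D$ in $M^n$: the mean curvature vector $Z$ of $D$ is a tangent field determined by the Levi-Civita connection of $M^n$ through \eqref{e:defumbilical}, and is unaffected by the vanishing of $\alpha_f$ on $D$. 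A spherical (non--totally geodesic) foliation of a totally geodesic hyperplane $\R^n\subset\R^{n+1}$ has $D\subset\Delta$ yet is not totally geodesic, so that implication fails. Your treatment of case $(iii)$ is explicitly conjectural (``I expect that\ldots''). To repair the argument, replace both of your case analyses by the single fact the paper uses --- wherever $Z\neq 0$ one has $D\cap\Delta=0$ --- after which $(i)$ and $(iii)$ are excluded by pure linear algebra and the closedness of the invariance set extends $(ii)$ to all of $M^n$.
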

\begin{proof} Since the subset of $M^n$ where $D$ is invariant by the shape 
operator of $f$ is closed, there is no loss of generality in assuming that its
mean curvature vector field never vanishes. Then $D\cap \Delta$ must be trivial, 
for $\Delta$ is totally geodesic. On the other hand, the distribution $D$ being 
curvature invariant by Lemma \ref{le:sphinv}, it follows from  Lemma \ref{le:sphinv} 
that one of conditions $(i)$ to $(iii)$ must hold at any point of $M^n$.
Since $k>n/2$, then $\text{rank}\, D(x)\cap \Delta(x) >0$ if $A(D(x))\subset D(x)^\perp$
at some $x\in M^n$, and because $k-1>0$, then $\text{rank}\, D(x)\cap \Delta(x) >0$ 
also if $(iii)$ holds at $x$. Thus neither condition $(i)$ nor condition $(iii)$ can 
hold at any point of $M^n$. 
\end{proof}

    It follows from Corollaries \ref{cor:sphdist} and \ref{cor:sphinv} that any 
hypersurface $f\colon M^{n}\to\mathbb{Q}_c^{n+1}$, $n\geq 3$, that carries a spherical 
distribution $D$ of rank $k>n/2$ whose mean curvature vector field never vanishes and
such that $D^\perp$ is integrable is locally a Ribaucour partial tube over a leaf of $D^\perp$.
In particular, this leads to the following  explicit description of such hypersurfaces
when $k=n-1$ (in which case integrability of $D^\perp$ is automatic).    

\begin{corollary}\label{cor:hyp}
Let $\gamma\colon I\to \mathbb{R}^{n+1}$ be a unit-speed curve. 
Let $\xi_1, \ldots, \xi_{n}$ be a parallel orthonormal frame
of $N_{\gamma}I$ and let $\varphi, \beta_i\in C^{\infty}(I)$, 
$1\leq i\leq n$, satisfy $\beta_i'+\varphi' k_i=0$, 
where $\gamma''=\sum_{i=1}^{n-1}k_i\xi_i$. 
Let $g\colon M_1^{n-1}\to \mathbb{R}^{n}$ be a hypersurface. 
Then the map  $f\colon M^n=I\times M_1^{n-1}\to \mathbb{R}^{n+1}$ 
given by
$$
f(s,t)=\gamma(s)-2\varphi(s)\frac{\varphi'(s)\gamma'(s)
+\sum_{i=1}^{n} (\beta_i(s)+g_i(t))\xi_i(s)}{(\varphi'(s))^2
+\sum_{i=1}^{n} (\beta_i(s)+g_i(t))^2}
$$
parametrizes, at regular points, a hypersurface for which the 
tangent spaces to $M_1$ give rise to a spherical distribution 
of codimension one.

 Conversely, any hypersurface  of $\R^{n+1}$, $n\geq 3$,  
carrying a spherical distribution of codimension one can be 
locally parametrized in this way.
\end{corollary}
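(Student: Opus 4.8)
The plan is to identify the displayed map with the Ribaucour partial tube over the curve $\gamma$ having $g$ as fiber, and then to read off both directions from Corollary \ref{cor:rpt} and the characterization in Theorem \ref{rpt}.

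For the direct statement, I would match the data of \eqref{e:ribpartialtube1}: take $f_1 = \gamma$, so that $N_\gamma I$ has rank $n$ and $\mathcal{L} = \text{span}\{\xi_1, \ldots, \xi_n\} = N_\gamma I$ is parallel and flat with parallel isometry $\Psi_s(y) = \sum_i y_i\xi_i(s)$, and take $f_0 = g$. Since the frame is parallel in the normal connection, $\xi_i' = -k_i\gamma'$ with $k_i = \langle\gamma'',\xi_i\rangle$, whence $\gamma'' = \sum_i k_i\xi_i$. Then for $\beta = \sum_i\beta_i\xi_i$ and any $\varphi \in C^\infty(I)$ I would compute $\alpha^\gamma(\text{grad}\,\varphi, \gamma') + \nabla^\perp_{\gamma'}\beta = \sum_i(\varphi' k_i + \beta_i')\xi_i$, so that $(\varphi,\beta) \in \mathcal{D}(\gamma)$ in the sense of \eqref{e:combescuresolution} is exactly the system $\beta_i' + \varphi' k_i = 0$. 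With $\mathcal{F}_{x_0} = \varphi'\gamma' + \sum_i(\beta_i + g_i)\xi_i$ one gets $\|\mathcal{F}_{x_0}\|^2 = (\varphi')^2 + \sum_i(\beta_i+g_i)^2$ and $\nu_{x_0} = \|\mathcal{F}_{x_0}\|^{-2}$, and the displayed map becomes $f = \gamma - 2\nu_{x_0}\varphi\mathcal{F}_{x_0}$, i.e. the Ribaucour partial tube determined by $(g,\gamma,\Psi,\varphi,\beta)$. Part (iii) of Corollary \ref{cor:rpt} then gives directly that the tangent spaces to the fiber factor form a spherical distribution, of codimension one since the base is the curve $I$.

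For the converse, let $f\colon M^n \to \R^{n+1}$, $n \geq 3$, carry a spherical distribution $D$ of codimension one. First I would pass to the open set where the mean curvature vector field of $D$ is nonzero, so that $D$ is totally geodesic on no open subset; since $\text{rank}\, D = n-1 > n/2$, Corollary \ref{cor:sphinv} applies and $D$ is invariant under the shape operator of $f$, which for a hypersurface is the same as the second fundamental form being adapted to the net $(D, D^\perp)$. As $D^\perp$ has rank one it is automatically integrable, so by Corollary \ref{cor:sphdist} the immersion $f$ is locally a Ribaucour partial tube over the restriction $f_1$ of $f$ to a leaf of $D^\perp$; that leaf is a curve, which I parametrize by arc length as $\gamma$. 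Theorem \ref{rpt} then supplies the parallel flat subbundle $\mathcal{L} \subset N_\gamma I$, which by substantiality of the fiber must be all of $N_\gamma I$ and hence yields an orthonormal frame $\xi_1,\ldots,\xi_n$, a fiber immersion $g\colon M_1^{n-1} \to \R^n$, and data $(\varphi,\beta) \in \mathcal{D}(\gamma)$. Running the computation of the first part in reverse, the membership $(\varphi,\beta) \in \mathcal{D}(\gamma)$ is the system $\beta_i' + \varphi' k_i = 0$, and writing out \eqref{e:ribpartialtube1} for $\gamma$ recovers the displayed parametrization.

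The hard part is really the invariance step, since a spherical distribution need not be invariant under the shape operator in general: it is exactly the hypotheses $n \geq 3$ and $\text{rank}\, D = n-1 > n/2$ that force invariance, through the curvature invariance of spherical distributions (Lemma \ref{le:sphinv}) and the algebraic trichotomy of Lemma \ref{prop:curv}, both packaged into Corollary \ref{cor:sphinv}. The remainder is bookkeeping: matching the abstract partial-tube data to the explicit frame-and-ODE description. I would also note that the degenerate locus where the mean curvature vector field of $D$ vanishes corresponds to $\varphi$ constant, and there $D$ is totally geodesic, a case falling under the partial-tube description of \cite{dt3} and \cite{drt}.
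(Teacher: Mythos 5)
Your proposal is correct and follows essentially the same route as the paper: the paper obtains this corollary precisely by combining Corollary \ref{cor:sphinv} (rank $n-1>n/2$ forces invariance under the shape operator) with Corollary \ref{cor:sphdist} and Theorem \ref{rpt}, and the identification of the explicit formula with the Ribaucour partial tube over $\gamma$ via the ODEs $\beta_i'+\varphi'k_i=0$ is exactly the computation carried out for Corollary \ref{cor:surfaces}. Your closing remarks on the role of the non-vanishing mean curvature vector field and the totally geodesic degenerate case match the paper's discussion preceding the statement.
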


\section{A de Rham-type theorem for product manifolds}

  In the last section of this article we will derive from Theorem \ref{rpt} a  decomposition theorem for conformal immersions of product manifolds endowed with polar metrics (see the end of the introduction for the definition of a polar metric). As a preliminary step of independent interest, in this section we prove a de Rham-type theorem of an intrinsic nature which characterizes the  Riemannian manifolds that are conformal to product manifolds endowed with polar metrics. 

 It was shown in Proposition 4 in \cite{t1} that a Riemannian metric on a product manifold 
$M = \prod_{i=0}^r M_i$ is polar if and only if the product net 
$\mathcal{E} = (E_i)_{i=0, \ldots, r}$ of $M$ is an orthogonal net such that $E_a^\perp$ 
is totally geodesic for all $1 \leq a \leq r$. Our first goal is to obtain a similar 
characterization of metrics on a product manifold that are conformal to a polar metric. 
Recall that two Riemannian metrics $g_1$ and $g_2$ on a manifold $M$ are \emph{conformal} 
if there exists a positive $\lambda \in  C^\infty(M)$ such that $g_2 = \lambda^2 g_1$. 
The function $\lambda$ is called the \emph{conformal factor} of $g_2$ with respect to~$g_1$.  

\begin{theorem}\label{p:polarmetricumblicialnet}
A Riemannian metric on a product $M = \prod_{i=0}^r M_i$ of connected manifolds is conformal 
to a polar metric if and only if the product net $\mathcal{E} = (E_i)_{i=0, \ldots, r}$ 
of $M$ is an orthogonal net such that $E_a^\perp$ is totally umbilical for $1 \leq a \leq r$.
\end{theorem}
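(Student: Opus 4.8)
The plan is to deduce both implications from the characterization of polar metrics recalled above (Proposition~$4$ in \cite{t1}): a metric on $M=\prod_{i=0}^rM_i$ is polar exactly when the product net $\mathcal{E}=(E_i)$ is orthogonal and each $E_a^\perp$, $1\le a\le r$, is totally geodesic. The bridge between the two sides of the conformal statement is the transformation rule of the Levi-Civita connection under $\bar g=e^{2\rho}g$,
$$
\bar\nabla_XY=\nabla_XY+(X\rho)Y+(Y\rho)X-g(X,Y)\,\text{grad}_g\,\rho,
$$
together with the fact that orthogonality of a net is conformally invariant, so it is automatic in both directions once it holds for one metric in the class.

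For the direct implication, write $g=e^{2\rho}\bar g$ with $\bar g$ polar, so that the net is orthogonal and each $E_a^\perp$ is totally geodesic for $\bar g$. Fixing $a$ and taking $X,Y\in\Gamma(E_a^\perp)$, the terms $(X\rho)Y$ and $(Y\rho)X$ stay in $E_a^\perp$, so projecting the transformation rule (with the roles of the two metrics interchanged) onto $E_a$ leaves only $(\nabla^g_XY)_{E_a}=-\bar g(X,Y)(\text{grad}_{\bar g}\,\rho)_{E_a}$. Hence $E_a^\perp$ is totally umbilical for $g$, which is the ``only if'' direction.

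For the converse, assume the net is orthogonal and each $E_a^\perp$ is totally umbilical for $g$ with mean curvature field $Z_a\in\Gamma(E_a)$. Repeating the projection above for $g\mapsto\bar g=e^{2\rho}g$ shows that $E_a^\perp$ becomes totally geodesic for $\bar g$ precisely when $(\text{grad}_g\,\rho)_{E_a}=Z_a$ for every $1\le a\le r$, the $E_0$-component of $\text{grad}\,\rho$ remaining free; Proposition~$4$ in \cite{t1} then yields that $\bar g$ is polar. So the whole problem reduces to producing a single function $\rho$ whose gradient realizes all the prescribed $E_a$-components at once. To this end I work in a product chart, where orthogonality makes $g$ block diagonal, $g=\sum_{i=0}^rg^{(i)}$ with $g^{(i)}$ the block along $E_i$. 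Testing umbilicity of $E_a^\perp$ on coordinate fields of a factor $E_b$ with $b\neq a$ reduces to one Christoffel symbol and gives, for all index pairs $\mu,\nu$ of that factor, $\partial_{x_a^i}\log g^{(b)}_{\mu\nu}=-2\,(Z_a)_i$, where $(Z_a)_i=\langle Z_a,\partial_{x_a^i}\rangle$. The right-hand side is independent of $b\neq a$ and of $\mu,\nu$, which is exactly the compatibility that lets the $Z_a$ be the partial gradients of one function; and taking $b=0$ shows that $\rho=-\tfrac{1}{2m_0}\log\det g^{(0)}$, with $m_0=\dim M_0$, satisfies $\partial_{x_a^i}\rho=(Z_a)_i$, i.e. $(\text{grad}_g\,\rho)_{E_a}=Z_a$, on the chart.

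It remains to globalize and to identify the crux. Any two local solutions differ by a function annihilated by every $E_a$ with $a\ge1$, hence, the fibres of $\pi_0$ being connected, by a function pulled back from $M_0$; correspondingly, $\det g^{(0)}$ only changes by an $x_0$-dependent factor under admissible changes of product charts. The associated \v{C}ech $1$-cocycle of functions on $M_0$ is a coboundary because $C^\infty(M_0)$ is fine, so the local potentials can be adjusted by functions of $x_0$ into a global $\rho$; since such adjustments merely rescale $\bar g$ by a positive function of $M_0$ and so preserve polarity, the construction is insensitive to the choice. I expect the main obstacle to be exactly this converse existence step: extracting from the \emph{simultaneous} umbilicity of all the $E_a^\perp$ the fact that the mean-curvature fields $Z_a$ assemble as the $E_0^\perp$-part of one gradient, and then patching the local potentials globally; by contrast, the two conformal projection computations are routine.
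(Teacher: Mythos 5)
Your argument is correct, and the converse---which is where all the content lies---follows a genuinely different route from the paper's. The paper proves the converse globally and without coordinates: for each $a$ it views $M$ as $M_a\times M_{\perp_a}$, invokes Proposition 1 of \cite{pr} to produce global functions $\lambda_a$ with $\mu_{x_a}^*g=\lambda_{x_a}^2\mu_{p_a}^*g$, and then assembles the conformal factor $\varphi$ by telescoping these $\lambda_a$ along an arbitrary permutation of the indices, using the independence of the result on the permutation to verify the polar form directly. You instead extract the conformal factor locally from the block-diagonal form of $g$ in a product chart, via the identity $\partial_{x_a^k}g^{(0)}_{\mu\nu}=-2(Z_a)_k\,g^{(0)}_{\mu\nu}$ and the potential $\rho=-\tfrac{1}{2m_0}\log\det g^{(0)}$, and then reduce polarity of $e^{2\rho}g$ to the totally geodesic characterization (Proposition 4 of \cite{t1}). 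Your route is more economical---it bypasses the Ponge--Reckziegel input and the permutation bookkeeping---at the price of a gluing step that the paper's globally defined $\lambda_a$ make unnecessary. Two small points there: first, the key relation should be written as $\partial_{x_a^k}g^{(b)}_{\mu\nu}=-2(Z_a)_k\,g^{(b)}_{\mu\nu}$ rather than with a logarithm, since off-diagonal entries of $g^{(b)}$ may vanish; second, on an overlap of product charts the $\pi_0$-fibres are products of intersections and need not be connected, so the clean justification that $\rho_i-\rho_j$ descends to $M_0$ is the one you give only in passing---it equals $-\tfrac{1}{m_0}\log|\det J_0|$ for the Jacobian $J_0$ of the $M_0$-coordinate change, manifestly a function of $x_0$ alone---rather than connectedness of fibres. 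With those adjustments the fine-sheaf argument closes the proof, and the direct implication coincides with the paper's.
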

\begin{proof} Let $g_1$ be a polar metric on $M = \prod_{i=0}^r M_i$ and let $g_2$ be conformal to $g_1$ with conformal factor $\lambda\in C^{\infty}$. 
It is well known that the Levi-Civita conections 
$\nabla^1$ and $\nabla^2$ of $g_1$ and $g_2$, respectively, are related by
\begin{equation}\label{e:levicivitarelationconf}
\nabla_X^2 Y = \nabla_X^1 Y + \frac{1}{\lambda}\left(Y(\lambda)X + X(\lambda)Y 
- g_2(X,Y)\text{grad}_2\lambda \right),
\end{equation} 
where $\text{grad}_2$ denotes the gradient with respect to $g_2$. Since $E_a^\perp$ is a totally geodesic distribution with respect to $g_1$ for $1 \leq a \leq r$, it follows from
\eqref{e:levicivitarelationconf} that $E_a^\perp$ is  totally umbilical  
with respect to $g_2$ with mean curvature vector field $-(\text{grad}_2 \log \lambda)_{E_a}$.  

To prove the converse statement, 
for each $1\leq a\leq r$ decompose $M=M_a\times M_{\perp_a}$. 	
Fix $p=(p_0,\ldots,p_r)\in M$ 
and endow $M_{\perp_a}$ with the metric $g_{\perp_a}=\mu_{p_a}^*g$.  
Given any $x=(x_0,\ldots,x_r)$, denote 
$$
p^a=(x_0,\ldots,x_{a-1},p_a,x_{r+1},\ldots,x_r),\;\; 
x^a=(p_0,\ldots,p_{a-1},x_a,p_{a+1},\ldots,p_r)
$$
and
$$
x^{0,a}=(x_0,p_1,\ldots,p_{a-1},x_a,p_{a+1},\ldots,p_r).
$$
Since $E_a^\perp$ is totally umbilical, it follows from Proposition 1 
in \cite{pr} that there exists a positive $\lambda_a\in C^{\infty}(M)$ such that
$$
\mu_{x_a}^*g=\lambda^2_{x_a}g_{\perp_a}=\lambda^2_{x_a}\mu_{p_a}^*g,
\;\;\mbox{for}\;\; 1\leq a\leq r,
$$ 
where $\lambda_{x_a}=\lambda_a\circ\mu_{x_a}$.   Thus, for all  $0\leq b\neq a\leq r$ and 
$X_b$, $Y_b\in T_{x_b}M_b$ we have 
\begin{equation}\label{sheet}
g(x_0,\ldots,x_r)(\tau_b^x{}_*X_b,\tau_b^x{}_*Y_b) 
=\lambda^2_{x_a}(\pi_{\perp_a}(x))g(p^a)(\tau_b^{p^a}\!{}_*X_b,\tau_b^{p^a}\!{}_*Y_b).
\end{equation}
For a fixed $1\leq a\leq r$, applying \eqref{sheet} for each  $1\leq b\neq a\leq r$ we obtain
\begin{align}\label{eq:dr2}
g(x_0,\ldots,x_r)&(\tau_a^x{}_*X_a,\tau_a^x{}_*Y_a)\\
&=\lambda^2_{x_r}(x_0,\ldots,x_{r-1})
\cdots\lambda_{x_{a+1}}^2(x_0,\ldots,x_a, p_{a+2}, 
\ldots,p_r)\nonumber\\
&\quad\lambda_{x_{a-1}}^2(x_0,\ldots,x_{a-2},x_a,p_{a+1},\ldots,p_r)\cdots\nonumber\\
&\quad\cdots\lambda_{x_1}^2(x_0,p_2,\ldots,p_{a-1},x_a,p_{a+1}, 
\ldots,p_r)\nonumber\\
&\quad g(x^{0,a})(\tau_a^{x^{0,a}}{}_* X_a,\tau_a^{x^{0,a}}{}_*Y_a)\nonumber.
\end{align}
On the other hand, for $b=0$, using recursively \eqref{sheet} following 
the order given by an arbitrary permutation 
$s\colon\{1,\ldots,r\}\to\{1,\ldots,r\}$,
we obtain
\begin{align}\label{eq:dr1}
g(x_0,&\ldots,x_r)(\tau_0^x{}_*X_0,\tau_0^x{}_*Y_0)\\
&=\lambda^2_{x_{s(1)}}(x_0,\ldots,x_{{s(1)}-1},x_{{s(1)}+1},\ldots,x_r)
\nonumber\cdots\\
&\quad\cdots\lambda^2_{x_{s(r)}}(x_0,p_1,\ldots,p_{s(r)-1},p_{s(r)+1}, 
\ldots,p_r)
g(x^0)(\tau_0^{x^0}{}_*X_0,\tau_0^{x^0}{}_* Y_0)\nonumber.
\end{align}
Since the permutation $s$ in \eqref{eq:dr1} is arbitrary, then for any two such permutations 
 $s, t$  we have
\begin{align}\label{equality}
&\lambda^2_{x_{s(1)}}(x_0,\ldots,x_{{s(1)}-1},x_{{s(1)}+1},\ldots,x_r)
\cdots\lambda^2_{x_{s(r)}}(x_0,p_1,\ldots,p_{s(r)-1},p_{s(r)+1}, 
\ldots,p_r)\\
&=\lambda^2_{x_{t(1)}}(x_0,\ldots,x_{{t(1)}-1},x_{{t(1)}+1},\ldots,x_r)
\cdots\lambda^2_{x_{t(r)}}(x_0,p_1,\ldots,p_{t(r)-1},p_{t(r)+1},\ldots,p_r)\nonumber
\end{align}

 Consider the metric $g_0=\tau^{p}_0{}^*g$ on $M_0$ and, for each $1\leq a\leq r$ and $x_0\in M_0$,  let $g_a(x_0)$ be the metric on $M_a$ given by
$$
g_a(x_0)(x_a)=\lambda^{-2}_{x_a}(x_0,p_1,\ldots,\hat{p}_a,\ldots,p_r)
(\tau_a^{x^0}{}^*g)(x_a).
$$
 The proof will be completed as soon as we show that
 \begin{equation}\label{conformalp}
g=\varphi^2(\pi_0^*g_0+\sum_{b=1}^r\pi_b^*(g_b\circ\pi_0)),
\end{equation}
where 
$$
\varphi^2(x_0,\ldots,x_r)=
\lambda^2_{x_r}(x_0,\ldots,x_{r-1})\cdots
\lambda^2_{x_1}(x_0,p_2,\ldots,p_r).
$$
Using \eqref{eq:dr1} for the permutation $s(\kappa)=r+1-\kappa$
we obtain
\begin{align*}
g(x)(&\tau_0^x{}_*X_0,\tau_0^x{}_*Y_0)\\
&=\lambda^2_{x_r}(x_0,\ldots,x_{r-1})\cdots
\lambda^2_{x_1}(x_0,p_2,\ldots,p_r)
g(x^0)(\tau_0^{x^0}{}_*X_0,\tau_0^{x^0}{}_*Y_0),
\end{align*}
while from the definition of the $g_0$ we have
\begin{align*}
\varphi^2(\pi_0^*g_0)(x)(\tau_0^x{}_*X_0,\tau_0^x{}_*Y_0)
&=\varphi^2(x)g_0(x_0)(X_0,Y_0)\\
&=\varphi^2(x)g(x^0)(\tau_0^p{}_*X_0,\tau_0^p{}_*Y_0)\\
&=\varphi^2(x)g(x^0)(\tau_0^{x^0}{}_*X_0,\tau_0^{x^0}{}_*Y_0).
\end{align*}
Thus \eqref{conformalp} holds for pairs of vectors that belong to $E_0$.  
Since the product net is orthogonal, we only have to show 
that \eqref{conformalp} holds for pairs of vectors in  
$E_a$ for any $1\leq a\leq r$.  From the definition of the metric
$g_a(x_0)$ for $x_0\in M_0$ we have
\begin{align*}
\pi_a^*(g_a\circ\pi_0)(x)(\tau_a^x{}_*X_a,&\tau_a^x{}_*Y_a)=
g_a(x_0)(x_a)(X_a,Y_a)\\
&=\lambda^{-2}_{x_a}(x_0,p_1,\ldots,\hat{p}_a,\ldots,p_r)
(\tau_a^{x^0}{}^*g)(x_a)	(X_a,Y_a)\\
&=\lambda^{-2}_{x_a}(x_0,p_1,\ldots,\hat{p}_a,\ldots,p_r)g(x^{0,a})
(\tau_a^{x^0}{}_*X_a,\tau_a^{x^0}{}_*Y_a)\\
&=\lambda^{-2}_{x_a}(x_0,p_1,\ldots,\hat{p}_a,\ldots,p_r)g(x^{0,a})
(\tau_a^{x^{0,a}}{}_*X_a,\tau_a^{x^{0,a}}{}_*Y_a),
\end{align*}
whereas from the definition of  $\varphi$ and \eqref{equality} 
we obtain
\begin{align*}
\varphi^2(x)&=\lambda^2_{x_r}(x_0,\ldots,x_{r-1})
\cdots\lambda_{x_{a+1}}^2(x_0,\ldots,x_a, p_{a+2}, 
\ldots,p_r)\nonumber\\
&\quad\lambda_{x_{a-1}}^2(x_0,\ldots,x_{a-2},x_a,p_{a+1},\ldots,p_r)\cdots\nonumber\\
&\quad\cdots\lambda_{x_1}^2(x_0,p_2,\ldots,p_{a-1},x_a,p_{a+1}, 
\ldots,p_r)\lambda_{x_a}^2(x_0,p_1,\ldots,\hat{p}_a,\ldots,p_r).	
\end{align*}
The last two equalities give
\begin{align*}
\varphi^2(x)(\pi_a^*(g_a&\circ\pi_0)(x)(\tau_a^x{}_*X_a,\tau_a^x{}_*Y_a))\\
&=\lambda^2_{x_r}(x_0,\ldots,x_{r-1})
\cdots\lambda_{x_{a+1}}^2(x_0,\ldots,x_a, p_{a+2}, 
\ldots,p_r)\nonumber\\
&\quad\lambda_{x_{a-1}}^2(x_0,\ldots,x_{a-2},x_a,p_{a+1},\ldots,p_r)\cdots\nonumber\\
&\quad\cdots\lambda_{x_1}^2(x_0,p_2,\ldots,p_{a-1},x_a,p_{a+1}, 
\ldots,p_r)g(x^{0,a})
(\tau_a^{x^{0,a}}{}_*X_a,\tau_a^{x^{0,a}}{}_*Y_a),
\end{align*}
thus proving \eqref{eq:dr2} and completing the proof.
\end{proof}

  The following additional fact will be needed in the proof of Theorem \ref{thm:decomp} below.
  
\begin{proposition}\label{prop:hess}
If $g_1$ is a polar metric on a product manifold $M = \prod_{i=0}^r M_i$ with product net 
$\mathcal{E} = (E_i)_{i=0, \ldots, r}$ and $g_2$ is conformal to $g_1$ with conformal factor 
$\lambda\in C^{\infty}(M)$, then $E_0$ is a spherical distribution if and only if 
$\text{Hess}\,\lambda$ is adapted to the net $(E_0, E_0^\perp)$.
\end{proposition}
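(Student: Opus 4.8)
The plan is to verify directly the two defining conditions of a spherical distribution, \eqref{e:defumbilical} and \eqref{e:defspherical}, for $E_0$ with respect to $g_2$, and to identify its mean curvature vector explicitly as a projected gradient of $\lambda$, so that the sphericity condition \eqref{e:defspherical} turns into a condition on $\text{Hess}\,\lambda$. Throughout I would use that, since $g_1$ and $g_2$ are conformal, the product net is orthogonal and $E_0^\perp$ is the \emph{same} subbundle for both metrics; hence all orthogonal projections onto $E_0$ and $E_0^\perp$ agree.

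First I would establish two reductions. The key structural input is that $E_0$ is totally geodesic with respect to $g_1$: by the characterization of polar metrics (Proposition 4 in \cite{t1}) recalled before Theorem \ref{p:polarmetricumblicialnet}, each $E_a^\perp$ is totally geodesic with respect to $g_1$ for $1\leq a\leq r$, and since $E_0 = \bigcap_{a=1}^r E_a^\perp$ is a finite intersection of totally geodesic distributions, it is itself totally geodesic. Then, exactly as in the first part of the proof of Theorem \ref{p:polarmetricumblicialnet}, the conformal relation \eqref{e:levicivitarelationconf} shows that $E_0$ is totally umbilical with respect to $g_2$, with mean curvature vector field
$$
Z = -(\text{grad}_2 \log \lambda)_{E_0^\perp} = -\frac{1}{\lambda}(\text{grad}_2 \lambda)_{E_0^\perp},
$$
where the subscript denotes orthogonal projection onto $E_0^\perp$. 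This settles \eqref{e:defumbilical}, so that $E_0$ is spherical with respect to $g_2$ precisely when the remaining condition \eqref{e:defspherical}, namely $(\nabla^2_T Z)_{E_0^\perp}=0$ for all $T\in\Gamma(E_0)$, holds.

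The heart of the argument is then to compute $(\nabla^2_T Z)_{E_0^\perp}$. Writing $W=\text{grad}_2\lambda$ and splitting $W=W_0+W_\perp$ with $W_0\in\Gamma(E_0)$ and $W_\perp=(W)_{E_0^\perp}$, so that $Z=-\lambda^{-1}W_\perp$, I would differentiate, project onto $E_0^\perp$, and dispose of the $W_0$-contribution by means of the umbilicity already established, using $(\nabla^2_T W_0)_{E_0^\perp}=g_2(T,W_0)Z$. The decisive observation is that, for $T\in\Gamma(E_0)$, one has $T(\lambda)=g_2(\text{grad}_2\lambda,T)=g_2(W_0,T)$, and this identity forces the two lower-order terms — the one from differentiating the factor $\lambda^{-1}$ and the one from umbilicity — to cancel. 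What survives is
$$
(\nabla^2_T Z)_{E_0^\perp} = -\frac{1}{\lambda}(\nabla^2_T \text{grad}_2 \lambda)_{E_0^\perp}.
$$
Since $g_2(\nabla^2_T \text{grad}_2\lambda, X)=\text{Hess}\,\lambda(T,X)$ for $X\in\Gamma(E_0^\perp)$, where $\text{Hess}\,\lambda$ is taken with respect to $g_2$, condition \eqref{e:defspherical} holds for all $T\in\Gamma(E_0)$ if and only if $\text{Hess}\,\lambda(T,X)=0$ for all $T\in\Gamma(E_0)$ and $X\in\Gamma(E_0^\perp)$, i.e. if and only if $\text{Hess}\,\lambda$ is adapted to the net $(E_0,E_0^\perp)$.

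I expect the main obstacle to be the bookkeeping in this last computation, and in particular the isolation and cancellation of the lower-order terms; it is this cancellation that makes the criterion come out cleanly as a condition on $\text{Hess}\,\lambda$ itself (the Hessian of $\lambda$, not of $\log\lambda$). It is also worth being explicit that the relevant Hessian is the one with respect to $g_2$: the off-diagonal block of $\text{Hess}\,\lambda$ genuinely differs from that of the $g_1$-Hessian by the term $\frac{2}{\lambda}T(\lambda)X(\lambda)$ for $T\in\Gamma(E_0)$ and $X\in\Gamma(E_0^\perp)$, so the choice of metric in the statement cannot be glossed over.
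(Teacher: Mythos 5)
Your proposal is correct and follows essentially the same route as the paper's proof: both establish that $E_0$ is umbilical with respect to $g_2$ with mean curvature vector $-(\text{grad}_2\log\lambda)_{E_0^\perp}$ via the conformal change of connection, then differentiate this vector field along $E_0$, use the umbilicity identity to absorb the contribution of the $E_0$-component of the gradient, and observe that the lower-order terms cancel to leave exactly $\lambda^{-1}\text{Hess}\,\lambda(X_0,Y_j)$ as the obstruction to sphericity. Your closing remark that the relevant Hessian is the $g_2$-Hessian is a useful clarification of a point the statement leaves implicit, but it does not alter the argument.
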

\begin{proof}
Since $E_0$ is totally geodesic with respect to $g_1$, by \eqref{e:levicivitarelationconf} it is 
umbilical with with mean curvature vector field $\eta=-(\text{grad}\log \lambda)_{E_0^\perp}$ with respect 
to $g_2$.  Now, for $X_0 \in E_0$ and $Y_j \in E_j \subset E_0^\perp$, $j \neq 0$, we have
\begin{align*}
-\left<\nabla_{X_0}\eta,Y_j\right> 
%\left<\nabla_{X_0}(\text{grad} \log \lambda)_{E_0^\perp},Y_j\right> 
&= \left<\nabla_{X_0} \text{grad} \log \lambda, Y_j\right> 
- \left<\nabla_{X_0}(\text{grad} \log \lambda)_{E_0},Y_j\right>\\
&=\left< \nabla_{X_0} (\lambda^{-1}\text{grad}\,\lambda),Y_j\right> 
- \left<X_0, \text{grad} \log \lambda\right>\left<Y_j, -(\text{grad} \log \lambda)_{E_0^\perp}\right>\\
&= -\lambda^{-2}X_0(\lambda)Y_j(\lambda) 
+ \lambda^{-1}\text{Hess}\,\lambda(X_0,Y_j) + \lambda^{-2}X_0(\lambda)Y_j(\lambda)\\
&= \lambda^{-1}\text{Hess}\,\lambda(X_0,Y_j),
\end{align*}
and the statement follows.
\end{proof}  
  
  Given a net $\mathcal{F} = (F_i)_{i=0, \ldots, r}$ on a manifold $M$, 
a diffeomorphism $\Psi\colon \prod_{i=0}^r M_i \to M$ from a product manifold with product net 
$\mathcal{E} = (E_i)_{i=0, \ldots, r}$ is called a \emph{product representation} of $\mathcal{F}$ 
if $\Psi_*E_i(p) = F_i(\Psi(p))$ for $0 \leq i \leq r$. Combining Theorem 1 in \cite{rs} and 
Theorem~\ref{p:polarmetricumblicialnet} yields the following de Rham-type result.

\begin{theorem}\label{t:carspherical}
Let $M$ be a Riemannian manifold and let $\mathcal{E} = (E_i)_{i=0, \ldots, r}$ be an orthogonal net such that $E_a^\perp$ is totally umbilical for each $1 \leq a \leq r$.  
Then for every point $p \in M$ there exists a local product representation 
$\Psi\colon \prod_{i=0}^r M_i \to U$	of $\mathcal{E}$, with $p \in U \subset M$, which is conformal with respect to a polar metric on $\prod_{i=0}^r M_i$.  
\end{theorem}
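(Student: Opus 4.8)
The plan is to obtain the product representation $\Psi$ by a purely net-theoretic existence step and then recognize the metric transported through $\Psi$ as conformal to a polar one, using the characterization already proved in Theorem \ref{p:polarmetricumblicialnet}. First I would apply Theorem $1$ of \cite{rs} to the net $\mathcal{E} = (E_i)_{i=0,\ldots,r}$ at the given point $p$ to produce a local product representation $\Psi\colon \prod_{i=0}^r M_i \to U$ of $\mathcal{E}$, with $p\in U$; by definition, its product net $\hat{\mathcal{E}} = (\hat E_i)_{i=0,\ldots,r}$ satisfies $\Psi_* \hat E_i(x) = E_i(\Psi(x))$ for $0\le i\le r$.

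Before invoking Theorem $1$ of \cite{rs} I would verify its hypotheses, which reduce to integrability conditions on the distributions determined by $\mathcal{E}$. The distributions $E_0,\ldots,E_r$ are integrable by the very definition of a net, and each $E_a^\perp$ with $1\le a\le r$ is integrable because it is totally umbilical (umbilical distributions being integrable). The one point that genuinely requires an argument is the integrability of the remaining distribution $E_0^\perp = \bigoplus_{a=1}^r E_a$, which is necessary for a product representation in which $E_0$ corresponds to the factor $\hat E_0$. This I would deduce from the umbilicity of $E_1^\perp,\ldots,E_r^\perp$ together with the orthogonality of the net: for sections $X_a\in\Gamma(E_a)$ and $X_b\in\Gamma(E_b)$ of distinct factors with $a,b\ge 1$, and $V\in\Gamma(E_0)$, expressing $[X_a,X_b]$ through the Levi-Civita connection and using that $X_b,V\in\Gamma(E_a^\perp)$ and $X_a,V\in\Gamma(E_b^\perp)$ forces the $E_0$-component of $[X_a,X_b]$ to vanish, whence $[\Gamma(E_0^\perp),\Gamma(E_0^\perp)]\subset\Gamma(E_0^\perp)$.

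With $\Psi$ available, I would endow $\prod_{i=0}^r M_i$ with the pullback metric $\hat g = \Psi^* g$, so that $\Psi$ becomes an isometry onto $(U,g)$. Since $\Psi$ is a product representation, it carries the product net $\hat{\mathcal{E}}$ to $\mathcal{E}$; as both orthogonality of a net and total umbilicity of a distribution are preserved under isometries, the product net of $(\prod_{i=0}^r M_i,\hat g)$ is orthogonal and $\hat E_a^\perp$ is totally umbilical for each $1\le a\le r$. Theorem \ref{p:polarmetricumblicialnet} then supplies a positive $\lambda\in C^\infty(\prod_{i=0}^r M_i)$ and a polar metric $g_P$ with $\hat g = \lambda^2 g_P$, and since $\Psi^* g = \hat g = \lambda^2 g_P$, the map $\Psi$ is conformal with respect to the polar metric $g_P$, as desired. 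I expect the main obstacle to be the first step, namely confirming that the umbilicity hypotheses supply exactly the integrability needed to apply Theorem $1$ of \cite{rs}; once the product representation is in hand, the metric identification is an immediate consequence of Theorem \ref{p:polarmetricumblicialnet}.
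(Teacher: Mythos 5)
Your proposal is correct and follows the same route as the paper, whose entire proof consists of combining Theorem 1 of \cite{rs} with Theorem \ref{p:polarmetricumblicialnet}. Your explicit verification that $E_0^\perp$ is integrable (deduced from the umbilicity of the $E_a^\perp$, $1\leq a\leq r$, via the bracket computation) is a hypothesis check that the paper leaves implicit, and your argument for it is sound.
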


The decomposition in Theorem \ref{t:carspherical} is of a local nature. 
Indeed, an example provided before Theorem 1 in \cite{pr} shows that a global representation can not always be achieved.

\begin{remark}\label{re:cpm}\emph{Observe that any orthogonal 
net $\mathcal{E} = (E_0, E_1)$ with only two factors such that $E_0$ has rank one satisfies the conditions in Thereom \ref{t:carspherical}, for any one-dimensional distribution is umbilical. Moreover, if $Z$ is a vector field  spanning $E_0$ on a simply connected open subset $U\subset M^n$, then the integrability of $E_1=E_0^\perp$ is equivalent to $Z$ being the gradient of a smooth function $\varphi\in C^{\infty}(U)$, the leaves of $E_1$ being the level sets of $\varphi$. Therefore, on any Riemannian manifold $M$  one can find as many such orthogonal nets $\mathcal{E}$, and hence as many local product representations of them that are conformal with respect to a polar metric, as smooth functions
on open subsets of $M$. 
}
\end{remark}

\section{Hypersurfaces of Enneper type }

 According to Theorem \ref{rpt},  hypersurfaces
$f\colon M^n= M_0^{n-1}\times I\to \mathbb{R}^{n+1}$ that are Ribaucour partial tubes over curves $\gamma\colon I\to \R^{n+1}$ 
are characterized by the fact that their product nets $\mathcal{E} = (E_0, E_1)$ satisfy conditions $(i)$ to $(iii)$ in Corollary \eqref{cor:rpt}, with $E_1$ of rank one. 
By Corollary \ref{cor:channel}, a special class of such hypersurfaces consists of channel hypersurfaces. 

Notice that, by Lemma \ref{prop:ext1}, conditions $(ii)$ and $(iii)$ in Corollary \eqref{cor:rpt} can be replaced by the requirement that the image by $f$ of each leaf $\sigma$ of $E_0$ be contained in a hypersphere of $\mathbb{R}^{n+1}$ that intersects $f(M)$ orthogonally along $f(\sigma)$.
 It is a natural problem to investigate the more general class of hypersurfaces  
$f\colon M^n= M_0^{n-1}\times I\to \mathbb{R}^{n+1}$ for which condition $(iii)$ is replaced by the following:
\begin{itemize}
\item[(iii')] The image by $f$ of each leaf of $E_0$ is contained in a hypersphere of $\mathbb{R}^{n+1}$ (which does not necessarily intersect $f(M)$ orthogonally along $f(\sigma)$).
\end{itemize}

  The next lemma shows that, under condition $(i)$, conditions $(ii)$ and $(iii')$ together are equivalent to requiring the image by $f$ of each leaf $\sigma$ of $E_0$ to be contained in a hypersphere of $\mathbb{R}^{n+1}$ that intersects $f(M)$ \emph{at a constant angle} along $f(\sigma)$.

\begin{lemma} \label{le:sphleaves} Let $f\colon M^n\to \mathbb{R}^{n+1}$ 
be a hypersurface and let $g\colon L^{n-1}\to M^n$ be  a hypersurface of $M^n$ such that $f(g(L))$ is contained in a hypersphere $\mathcal{S}$ of $\mathbb{R}^{n+1}$. Then $\mathcal{S}$ intersects $f(M)$ at a constant angle along $f(g(L))$ if and only if the shape operator $A$ of $f$ leaves $g_*TL$ invariant. 
 \end{lemma}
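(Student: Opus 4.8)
The plan is to reduce the whole statement to a single scalar function on $L$, the cosine of the angle between $\mathcal{S}$ and $f(M)$ along $f(g(L))$, and to compute its derivative. First I would fix notation: let $N$ be the Gauss map of $f$, so that $A=A_N$, and let $\xi$ be a unit section of $(g_*TL)^\perp\subset TM$ along $g(L)$. Writing $F=f\circ g\colon L^{n-1}\to\R^{n+1}$, the normal space of $F$ is $N_FL=\mathrm{span}\{N,f_*\xi\}$, since $f_*\xi$ is the unique (up to sign) unit vector that is tangent to $f(M)$ and normal to $F(L)$. If $\mathcal{S}$ has center $c$ and radius $r$, its unit normal along $F(L)$ is $\nu=r^{-1}(F-c)\in N_FL$, so I may write $\nu=aN+b\,f_*\xi$ with $a=\langle N,\nu\rangle$, $b=\langle f_*\xi,\nu\rangle$ and $a^2+b^2=1$. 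The angle between $\mathcal{S}$ and $f(M)$ is governed by $a$, so constancy of the angle means exactly that $a$ is constant along $F(L)$; and since $A$ is symmetric and $(g_*TL)^\perp=\mathrm{span}\,\xi$, invariance of $g_*TL$ under $A$ is equivalent to $A\xi\in\mathrm{span}\,\xi$, i.e.\ to $\langle g_*X,A\xi\rangle=0$ for every $X\in TL$.

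The core of the proof is a single differentiation. For $X\in TL$, differentiating $a=\langle N,\nu\rangle$ along $F$ with the flat connection $\bar\nabla$ of $\R^{n+1}$ gives
\begin{equation*}
X(a)=\langle\bar\nabla_X N,\nu\rangle+\langle N,\bar\nabla_X\nu\rangle.
\end{equation*}
The second term vanishes because $\bar\nabla_X\nu=r^{-1}f_*g_*X$ is tangent to $f(M)$ and hence orthogonal to $N$. For the first term I would use the Weingarten formula $\bar\nabla_X N=-f_*A(g_*X)$ (in the direction $g_*X$), the orthogonality $f_*A(g_*X)\perp N$, and the fact that $f$ is isometric, to obtain
\begin{equation*}
X(a)=-\langle f_*A(g_*X),\nu\rangle=-b\,\langle A(g_*X),\xi\rangle=-b\,\langle g_*X,A\xi\rangle,
\end{equation*}
the last step by symmetry of $A$. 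This identity yields both implications at once wherever $b\neq0$: if $g_*TL$ is $A$-invariant then $X(a)=0$ for all $X$, so the angle is constant; conversely, if $a$ is constant then $b\,\langle g_*X,A\xi\rangle=0$, and where $b\neq0$ this forces $\langle g_*X,A\xi\rangle=0$, i.e.\ $A$-invariance of $g_*TL$.

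The one point requiring extra care --- and the main, though minor, obstacle --- is the tangency locus $b=0$, where the identity is vacuous. Using that $a$ is constant I would split into two cases. Either $|a|<1$, in which case $b^2=1-a^2>0$ is nowhere zero and the identity above applies directly; or $|a|\equiv1$, in which case $b\equiv0$ and $\nu=\pm N$ along all of $F(L)$, so that $F=c\pm rN$. Differentiating the latter gives $f_*g_*X=\pm r\,\bar\nabla_X N=\mp r\,f_*A(g_*X)$, hence $A(g_*X)=\mp r^{-1}g_*X$ for all $X$, so $g_*TL$ is an eigenspace of $A$ and in particular $A$-invariant, while the angle is trivially constant. (That constancy of the unsigned angle is the same as constancy of $a$ follows since $a$ is continuous with $a^2+b^2=1$, so $|a|$ cannot be constant with $a$ changing sign unless $a\equiv0$.) Combining the two cases establishes the equivalence in full.
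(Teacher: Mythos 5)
Your proof is correct and follows essentially the same route as the paper: both decompose the unit normal of $\mathcal{S}$ in the frame $\{N, f_*\xi\}$ along $f(g(L))$ and differentiate, reducing the statement to the identity $X(\cos\theta)=-\sin\theta\,\langle g_*X,A\xi\rangle$. The only difference is organizational: the paper differentiates the vector identity $f\circ g-P_0=R\cos\theta\,(N\circ g)+R\sin\theta\,f_*\delta$ and compares both the $N$- and the $f_*\delta$-components, which yields $T(\theta)=\langle Ag_*T,\delta\rangle$ outright and makes your separate treatment of the tangency locus $\sin\theta=0$ unnecessary.
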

 \begin{proof} Let $P_0\in \R^{n+1}$ and $R>0$ be the center and the radius of $\mathcal{S}$,
 respectively, and let $\theta$ be the angle between its unit normal vector field $(f\circ g-P_0)/R$ and a unit normal vector field $N$ of $f$ along $f(g(L))$. Then
 $$
 f\circ g-P_0=R\cos\theta (N\circ g)+R\sin \theta f_*\delta,
 $$
 where $\delta$ is a unit normal vector field to $g$. Hence, for all $T\in \mathfrak{X}(L)$ we have
 $$
 f_*T=RT(\theta)(-\sin \theta N\circ g+\cos\theta f_*\delta)-R\cos\theta f_*Ag_*T+R\sin\theta(f_*\nabla_T\delta+\left<Ag_*T, \delta\right>N).
 $$
 Thus $T(\theta)=0$ for all $T\in \mathfrak{X}(L)$ if and only if $ \left<Ag_*T, \delta\right>=0$  for all $T\in \mathfrak{X}(L)$.
 \end{proof} 
 
 For $n=2$, conditions $(i)$ and $(ii)$ in Corollary \eqref{cor:rpt} mean that the leaves of $E_0$ and $E_1$ are lines of curvature of $f$. Condition $(iii')$ says that those correspondent to $E_0$ are contained in spheres.  Surfaces in $\R^3$ with these properties were called \emph{surfaces of Enneper type} in \cite{we}. Accordingly, we call a hypersurface $f\colon M^n= M_0^{n-1}\times I\to \mathbb{R}^{n+1}$, with product net $\mathcal{E} = (E_0, E_1)$, that satisfies conditions $(i)$, $(ii)$ and $(iii')$ above, a \emph{hypersurface of Enneper type}, or, more precisely, a hypersurface of Enneper type with respect to $\mathcal{E} = (E_0, E_1)$.

In the next subsection we first consider the case in which condition $(iii)$  is replaced by the following:
\begin{itemize}
\item[(iii'')] The image by $f$ of each leaf of $E_0$ is contained in an affine hyperplane of $\mathbb{R}^{n+1}$.
\end{itemize}    
 For $n=2$, surfaces that satisfy conditions $(i)$, $(ii)$ and $(iii'')$ are surfaces with planar lines of curvature correspondent to one of their principal curvatures. 
 
\subsection{Hypersurfaces of Enneper type with extrinsically planar leaves}   

   The next result shows how all hypersurfaces 
$f\colon M^n= M_0^{n-1}\times I\to \mathbb{R}^{n+1}$ that satisfy conditions $(i)$, $(ii)$ and $(iii'')$ can be constructed in terms of  Ribaucour partial tubes $N\colon M^n\to \mathbb{S}^{n}$ over curves in 
$\mathbb{S}^{n}$.

Notice that, for a Ribaucour partial tube $N\colon M^n\to \mathbb{S}^{n}$ over a curve in 
$\mathbb{S}^{n}$, the product net $\mathcal{E} = (E_0, E_1)$ of $M^n$ is a twisted product net with respect to the metric $d\sigma^2$ induced on $ M^n= M_0^{n-1}\times I$ by $N$,  that is, both $E_0$ and $E_1$ are umbilical distributions (with $E_0$ being, in fact, spherical), and 
$$d\sigma^2=v_0^2d\sigma_0^2+\nu^2ds^2,$$
 where $d\sigma_0^2$ is a metric
of constant curvature $1$ on $M_0$ and $ds^2$ is the standard metric on $I$. The mean curvature vector field of $E_0$ is 
$$H_0=-(\text{grad }\log v_0)_{E_1}=-\frac{1}{\nu^2}\frac{\partial (\log v_0)}{\partial s}\partial_s,$$
where $\partial_s$ is a unit vector field (with respect to the metric $ds^2$) along $I$.
Writing
$${\displaystyle \varphi=-\frac{1}{\nu}\frac{\partial (\log v_0)}{\partial s}},$$
that $E_0$ is spherical is equivalent to $\varphi$ depending only on $s$.

\begin{theorem} \label{thm:class} Let $N\colon M^n=M_0^{n-1}\times I\to \mathbb{S}^{n}$ be a 
Ribaucour partial tube over a unit-speed curve $\beta\colon I\to \mathbb{S}^{n}$.
Given $V\in C^{\infty}(I)$ and  $U\in C^{\infty}(M_0)$, define
$\gamma\in C^\infty(M)$ by
\begin{equation}\label{eq:gamma}
\gamma(x, s)=v_0(x,s)\left(U(x)+\int_0^{s}\frac{V(\tau)\nu(x,\tau)}{v_0(x,\tau)}d\tau\right).
\end{equation}
Then the map $f\colon M^n\to \mathbb{R}^{n+1}$ given  by
\begin{equation}\label{eq:gauss2}
f=\gamma (i\circ N)+i_*N_*\text{grad}\,\gamma,
\end{equation}
where $i\colon \mathbb{S}^{n}\to \mathbb{R}^{n+1}$ is the inclusion and $\text{grad}\,\gamma$ is computed with respect to the metric $d\sigma^2$ on $M^n$ induced by $N$, defines,
on the subset of its regular points,  a hypersurface satisfying 
conditions $(i)$, $(ii)$ and $(iii'')$.

Conversely, any  hypersurface $f\colon M_0^{n-1}\times I\to \mathbb{R}^{n+1}$ 
satisfying conditions $(i)$, $(ii)$ and $(iii'')$ whose shape operator has rank $n$ everywhere is given locally in this way.
\end{theorem}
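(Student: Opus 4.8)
The plan is to treat \eqref{eq:gauss2} throughout as the Gauss parametrization of $f$ with Gauss map $i\circ N$, and to read all three conditions off a single operator. Writing $P=\gamma I+\text{Hess}\,\gamma$, with Hessian and gradient taken with respect to the metric $d\sigma^2$ induced by $N$, a direct differentiation of \eqref{eq:gauss2} together with the Gauss formula for $i\colon\mathbb{S}^n\to\mathbb{R}^{n+1}$ yields $f_*=i_*N_*P$. Hence $i\circ N$ is a unit normal field of $f$, so $i\circ N$ is indeed the Gauss map, the metric induced by $f$ is $\langle PX,PY\rangle_{d\sigma^2}$, and the shape operator of $f$ with respect to $i\circ N$ is $A=-P^{-1}$ at regular points, where $P$ is invertible. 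Thus condition $(ii)$ holds if and only if $P$ preserves the distributions $E_0$ and $E_1$, and in that case condition $(i)$ is automatic, since $E_0$ and $E_1$ are $d\sigma^2$-orthogonal and $P$ is $d\sigma^2$-symmetric.

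For the direct statement I would first use that, for a Ribaucour partial tube $N$ over a curve, $d\sigma^2$ is the twisted product $v_0^2\,d\sigma_0^2+\nu^2\,ds^2$ with $E_0$ spherical, the latter being equivalent to $\partial_s\log v_0=-\nu\varphi$ with $\varphi=\varphi(s)$. The heart of the computation is the mixed block $\text{Hess}\,\gamma(X_0,\partial_s)=X_0\partial_s\gamma-(\partial_s\log v_0)\,X_0\gamma-(X_0\log\nu)\,\partial_s\gamma$ for $X_0\in\Gamma(E_0)$, obtained from the connection formula $\nabla_{X_0}\partial_s=(\partial_s\log v_0)X_0+(X_0\log\nu)\partial_s$ of the twisted product. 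Substituting $\gamma=v_0 G$ with $G$ as in \eqref{eq:gamma} and using $\partial_s\log v_0=-\nu\varphi(s)$, all terms cancel and this mixed block vanishes, so $P$ preserves $E_0$ and $E_1$; this gives $(i)$ and $(ii)$. For $(iii'')$ I would invoke the remark following Theorem \ref{rpt} in its space-form version: since $N$ is a Ribaucour partial tube, the image $N(\sigma)$ of each leaf $\sigma$ of $E_0$ lies in a sphere of $\mathbb{S}^n$, that is, in $\mathbb{S}^n\cap\hat H$ for an affine hyperplane $\hat H=\{\langle\,\cdot\,,b\rangle=c\}$. Then $b\perp i_*N_*E_0$ along $\sigma$, while $(ii)$ and invertibility of $P$ give $f_*E_0=i_*N_*E_0$; hence $X_0\langle f,b\rangle=\langle f_*X_0,b\rangle=0$ for all $X_0\in\Gamma(E_0)$, so $f(\sigma)$ lies in the affine hyperplane $\{\langle\,\cdot\,,b\rangle=\text{const}\}$.

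For the converse, let $N$ be the Gauss map of $f$. Since $A$ has rank $n$ everywhere, $dN=-f_*A$ is an isomorphism, so $N\colon M^n\to\mathbb{S}^n$ is a local diffeomorphism, $(M,d\sigma^2)$ has constant curvature one with $N$ a local isometry onto $\mathbb{S}^n$, and \eqref{eq:gauss2} recovers $f$ from $N$ and the support function $\gamma=\langle f,i\circ N\rangle$. Orthogonality of the product net in $d\sigma^2$ follows from $\langle N_*X_0,N_*X_1\rangle=\langle AX_0,AX_1\rangle_f=0$, using $(i)$ and $(ii)$. To see that $E_0$ is spherical in $d\sigma^2$ I would run the previous argument in reverse: $(iii'')$, namely $f(\sigma)\subset\{\langle\,\cdot\,,a\rangle=c\}$, forces $a\perp f_*E_0=i_*N_*E_0$, whence $\langle i\circ N,a\rangle$ is constant along $\sigma$ and $N(\sigma)\subset\mathbb{S}^n\cap\{\langle\,\cdot\,,a\rangle=\text{const}\}$; thus each leaf of $E_0$ is an open subset of a hypersphere of $\mathbb{S}^n$, i.e.\ an extrinsic sphere, so $E_0$ is spherical. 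Corollary \ref{rptdiffeo} then shows $N$ is a Ribaucour partial tube over a curve $\beta\colon I\to\mathbb{S}^n$. Finally $(ii)$ gives $\text{Hess}\,\gamma(X_0,\partial_s)=0$, which with $\gamma=v_0 G$ reduces to $X_0\log\big((v_0/\nu)\,\partial_s G\big)=0$; hence $(v_0/\nu)\,\partial_s G$ depends only on $s$, say equals $V(s)$, and integration in $s$ yields exactly \eqref{eq:gamma} with $U(x)=G(x,0)$.

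I expect the main obstacle to be the twisted-product Hessian computation and the recognition that \eqref{eq:gamma} is precisely the general solution of $\text{Hess}\,\gamma(X_0,\partial_s)=0$; the sphericalness of $E_0$ enters exactly to kill the zeroth-order term in $G$. The second delicate point is the dictionary, through the Gauss parametrization, between leaves of $E_0$ lying in affine hyperplanes of $\mathbb{R}^{n+1}$ and their $N$-images lying in hyperspheres of $\mathbb{S}^n$; it is condition $(ii)$, equivalently that $P$ preserves $E_0$, that makes this correspondence exact in both directions.
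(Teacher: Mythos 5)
Your proposal is correct and follows essentially the same route as the paper's proof: the Gauss parametrization with $P=\gamma I+\mathrm{Hess}\,\gamma$ and $A=-P^{-1}$, the twisted-product computation of $\mathrm{Hess}\,\gamma(X_0,\partial_s)$ using $\partial_s\log v_0=-\nu\varphi(s)$ to show that \eqref{eq:gamma} is exactly the general solution of the adaptedness condition, the transfer of planarity of the $f$-leaves to sphericity of the $N$-leaves (and back) via $f_*E_0=i_*N_*E_0$, and Corollary \ref{rptdiffeo} to recognize $N$ as a Ribaucour partial tube over a curve in the converse. The only cosmetic difference is that you verify the forward implication of the Hessian claim by direct substitution of $\gamma=v_0G$ rather than stating it as an equivalence up front, which is immaterial.
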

\begin{proof} Let  $f\colon M^n\to \mathbb{R}^{n+1}$ be given  by
 \eqref{eq:gauss2} for some $\gamma\in C^{\infty}(M)$.  Differentiating \eqref{eq:gauss2} gives
\begin{equation}\label{eq:difer}
f_*=i_*N_*P,
\end{equation}
where $P=\text{Hess}\,\gamma+\gamma I$, the Hessian being computed 
with respect to $d\sigma^2$. 

Moreover, on the open subset where $P$ is invertible, that is, on the open subset of 
regular points of $f$, it follows from \eqref{eq:difer} that the map $N$ is the Gauss map of $f$ and that the shape operator of $f$ 
with respect to $N$ is 
\begin{equation}\label{eq:shape}
A=-P^{-1}.
\end{equation}

We claim that $\text{Hess}\,\gamma$
is adapted to  $\mathcal{E} = (E_0, E_1)$ if and only if $\gamma$ is given by \eqref{eq:gamma} for some $V\in C^{\infty}(I)$ and  $U\in C^{\infty}(M_0)$. 
Using that $E_0$ is umbilical with mean curvature vector field $H_0=\varphi\nu^{-1}\partial_s$ we obtain
\begin{align*}
\nabla_X \partial_s&=\nabla_X \nu(\nu^{-1}\partial_s)\\
&=X(\nu)\nu^{-1}\partial_s-\nu\|X\|^{-2}\left\langle\nabla_XX,\nu^{-1}\partial_s\right\rangle X\\
&=X(\log \nu)\partial_s-\nu\varphi X
\end{align*}
for all $X\in \Gamma(E_0)$. Hence
\begin{align*}
\text{Hess}\,\gamma(X, \partial_s)&=X\left(\frac{\partial \gamma}{\partial s}\right)-\left(\nabla_X \partial_s\right)(\gamma)\\
&=X\left(\frac{\partial \gamma}{\partial s}\right)-X(\log \nu)\frac{\partial \gamma}{\partial s}+\nu\varphi X(\gamma).
\end{align*}
Therefore $\text{Hess}\,\gamma(X, \partial_s)=0$ if and only if 
$$
X\left(\frac{\partial \gamma}{\partial s}\right)+\nu\varphi X(\gamma)=X(\log \nu)\frac{\partial \gamma}{\partial s}.
$$
Since $\varphi$ depends only on $s$, this can also be written as 
$$
X\left(\frac{\partial \gamma}{\partial s}+\varphi\nu\gamma\right)=X(\log \nu)\left(\frac{\partial \gamma}{\partial s}+\varphi\nu\gamma\right).
$$
Thus
${\displaystyle \frac{\partial \gamma}{\partial s}+\varphi\nu\gamma=\nu V}
$
for some $V\in C^{\infty}(I)$, which can be written as 
$$\frac{\partial (\gamma v_0^{-1})}{\partial s}=\nu Vv_0^{-1},$$
%$$ \frac{\partial}{\partial s}\left(\frac{\gamma}{v_0}\right)=\frac{\nu V}{v_0},$$
taking into account that ${\displaystyle \varphi\nu=-\frac{\partial (\log v_0)}{\partial s}}$. This proves our claim.

Notice that $\text{Hess}\,\gamma$
being adapted to  $\mathcal{E} = (E_0, E_1)$ implies both $P$ and $P^{-1}$ to be also 
adapted to  $\mathcal{E}$. This, together with \eqref{eq:difer}, \eqref{eq:shape} and the fact that
the product net $\mathcal{E} = (E_0, E_1)$ of $M^n$ is orthogonal with respect to 
the metric induced by $N$, implies that $\mathcal{E}$ is also orthogonal with  respect to 
the metric induced by $f$ and that the second 
fundamental form of $f$ is adapted to $\mathcal{E}$. 
Finally,  the image by $N$ of each leaf $\sigma$ of $E_0$ is a small hypersphere of $\mathbb{S}^{n}$.
Hence, if $\mathcal{H}$ is the hyperplane of $\mathbb{R}^{n+1}$ that is parallel to the 
affine hyperplane that contains $N(\sigma)$, 
then $N_*T\in \mathcal{H}$ for all $x\in \sigma$ and $T\in T_x\sigma$. 
It follows from \eqref{eq:difer} and the fact that 
$P$ leaves $E_0$ invariant that $f_*T\in \mathcal{H}$ 
for all $x\in \sigma$ and $T\in T_x\sigma$. 
Therefore $f(\sigma)$ is also contained in an affine hyperplane parallel to $\mathcal{H}$.

    Conversely, let $f\colon M^n= M_0^{n-1}\times I\to \mathbb{R}^{n+1}$ be a hypersurface 
satisfying conditions $(i)$, $(ii)$ and $(iii'')$ above and let $N\colon M^n\to \mathbb{S}^{n}$ be its Gauss map. 
If $\mathcal{H}$ is the hyperplane that is parallel to the affine hyperplane containing the image by $f$ of a leaf $\sigma$ of $E_0$, by condition $(ii)$ we have 
$$
N_*T=\tilde \nabla_T N=-f_*AT\in f_*T_x\sigma\in  \mathcal{H}
$$
for all $T\in T_x\sigma$. Hence $N(\sigma)$ is also contained in an affine hyperplane parallel to $\mathcal{H}$, 
and consequently it is an open subset of the small hypersphere of $\mathbb{S}^n$ given by its 
intersection with $\mathbb{S}^n$. Therefore, $N\colon M_0\times I\to \mathbb{S}^{n}$ is a local
diffeomorphism (by the assumption that the shape operator of $f$ has rank $n$ everywhere) with the property that the image by $N$ of any leaf of $E_0$ is a small hypersphere 
of $\mathbb{S}^{n}$. Thus $E_0$ is a spherical distribution with respect to the metric 
induced by $N$. Moreover, by condition $(ii)$ the images by $N$ of the integral curves of $E_1$
are orthogonal trajectories of the foliation of $\mathbb{S}^{n}$ given by the images of the leaves 
of $E_0$. In other words, the net $\mathcal{E}$ is also an orthogonal net with respect to the metric 
induced by $N$. It follows from Corollary \ref{rptdiffeo} that $N$ is a Ribaucour partial tube over a unit-speed curve $\beta\colon I\to \mathbb{S}^n$. 

  Now, the Gauss parametrization allows to recover $f$ in terms of $N$ and the support function $\gamma$ by means of \eqref{eq:gauss2}. Since the second fundamental form of $f$ is adapted to $\mathcal{E}$, then $\text{Hess}\,\gamma$ must also be adapted to $\mathcal{E}$ by \eqref{eq:shape}.  Thus $\gamma$ must be given by \eqref{eq:gamma}, as shown in the proof of the direct statement.
\end{proof}

For $n=2$, Theorem \ref{thm:class}  reads as follows.

\begin{corollary} \label{cor:planar} Let $N\colon J\times I\to \mathbb{S}^{2}$ be a Ribaucour partial tube over a 
unit-speed curve $\beta\colon I\to \mathbb{S}^{2}$ and let  $ds^2=v_1^2du_1^2+v_2^2du_2^2$ be the  metric induced by $N$. Given $U\in C^{\infty}(J)$ and $V\in C^{\infty}(I)$, let $\gamma\in C^\infty(J\times I)$ be given by
$$
\gamma(u_1, u_2)=v_1(u_1,u_2)\left(U(u_1)+\int_0^{u_2}\frac{V(\tau)v_2(u_1,\tau)}{v_1(u_1,\tau)}d\tau\right).
$$
Then the map $f\colon J\times I\to \mathbb{R}^{3}$ given by
$$
f(u_1,u_2)=\gamma(u_1,u_2)N(u_1,u_2)+\frac{1}{v_1^2}\frac{\partial \gamma}{\partial u_1}\frac{\partial N}{\partial u_1}
+\frac{1}{v_2^2}\frac{\partial \gamma}{\partial u_2}\frac{\partial N}{\partial u_2}
$$
defines, on the open subset of its regular points, a surface parametrized by lines of curvature whose
$u_1$-lines of curvature are planar. 

Conversely, any  surface in $\mathbb{R}^3$ free of flat points whose lines of curvature correspondent to one of its principal curvatures are planar can be locally parametrized in this way. 
\end{corollary}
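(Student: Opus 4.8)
The plan is to derive Corollary~\ref{cor:planar} directly from Theorem~\ref{thm:class} by specializing to $n=2$, so that the entire argument reduces to a translation of notation together with an explicit computation of the gradient in orthogonal coordinates. First I would fix the dictionary between the two statements. For $n=2$ the factor $M_0^{n-1}$ is one-dimensional and plays the role of $J$ with coordinate $u_1$, while $I$ carries the coordinate $u_2$; the leaves of $E_0$ are then the $u_1$-curves and those of $E_1$ the $u_2$-curves. As observed in the discussion preceding the definition of a hypersurface of Enneper type, for $n=2$ conditions $(i)$ and $(ii)$ of Corollary~\ref{cor:rpt} say precisely that the coordinate curves are lines of curvature of $f$, while condition $(iii'')$ says that the $u_1$-lines of curvature are planar. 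Thus the hypotheses of Corollary~\ref{cor:planar} match those of Theorem~\ref{thm:class} for $n=2$, and both the direct and the converse assertions will follow once the formulas are transcribed.

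Next I would identify the two metrics. Written in the orthogonal coordinates $(u_1,u_2)$ of the product net, the induced metric $d\sigma^2=v_0^2\,d\sigma_0^2+\nu^2\,ds^2$ of Theorem~\ref{thm:class} takes the form $v_1^2\,du_1^2+v_2^2\,du_2^2$. Since $M_0$ is one-dimensional, the requirement that $d\sigma_0^2$ have constant curvature $1$ is vacuous, and after choosing $u_1$ so that $d\sigma_0^2=du_1^2$ (and $u_2$ so that $ds^2=du_2^2$) one reads off $v_0=v_1$ and $\nu=v_2$. Substituting $v_0=v_1$, $\nu=v_2$, $x=u_1$ and $s=u_2$ into \eqref{eq:gamma} then reproduces verbatim the formula for $\gamma$ in the statement.

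It remains to make \eqref{eq:gauss2} explicit. With respect to the orthogonal metric $v_1^2\,du_1^2+v_2^2\,du_2^2$ one has
$$
\text{grad}\,\gamma=\frac{1}{v_1^2}\frac{\partial\gamma}{\partial u_1}\,\partial_{u_1}+\frac{1}{v_2^2}\frac{\partial\gamma}{\partial u_2}\,\partial_{u_2},
$$
so that $N_*\text{grad}\,\gamma=v_1^{-2}(\partial\gamma/\partial u_1)(\partial N/\partial u_1)+v_2^{-2}(\partial\gamma/\partial u_2)(\partial N/\partial u_2)$, and substituting this into \eqref{eq:gauss2} yields exactly the displayed parametrization of $f$. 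Finally, for $n=2$ the hypothesis in Theorem~\ref{thm:class} that the shape operator have rank $n$ everywhere is the condition that the Gaussian curvature be nowhere zero, i.e., that $f$ be free of flat points; this is what guarantees, in the converse direction, that the Gauss map $N$ is a local diffeomorphism and hence a Ribaucour partial tube. I anticipate no genuine obstacle in this proof: the only point requiring a little care is the degenerate, one-dimensional nature of the factor $M_0$, where one must verify that invoking Corollary~\ref{rptdiffeo} and normalizing $d\sigma_0^2=du_1^2$ cause no difficulty.
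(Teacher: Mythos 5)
Your proposal is correct and coincides with the paper's treatment: the paper offers no separate proof, introducing Corollary \ref{cor:planar} only with the phrase ``For $n=2$, Theorem \ref{thm:class} reads as follows,'' and your notational dictionary ($v_0=v_1$, $\nu=v_2$, $x=u_1$, $s=u_2$), the coordinate expression of the gradient, and the identification of the rank-$2$ condition with the absence of flat points are exactly the intended specialization.
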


\subsection{The general case} 

We now address the general problem of describing all hypersurfaces $f\colon M^n=M_0^{n-1}\times I\to \mathbb{R}^{n+1}$ of  Enneper type.

   If $N\colon M^n\to \mathbb{S}^n$ is the Gauss map of a hypersurface of Enneper type, it follows from conditions $(i)$ and $(ii)$ that the product net  $\mathcal{E} = (E_0, E_1)$  of $M^n$ is orthogonal also with respect to the metric $d\sigma^2$ induced by $N$. Hence, by Theorem~\ref{t:carspherical} (see also Remark \ref{re:cpm}), we can write
   \begin{equation}\label{eq:ds2}
   d\sigma^2=g_s+\rho^2ds^2
   \end{equation}
for some $\rho\in C^{\infty}(M)$ and for some smooth family of metrics $g_s$ on $M_0^{n-1}$
indexed on $I$. In particular, $\rho^{-1}\partial_s$ spans $E_1$ and has unit length  with respect to $d\sigma^2$.

 Let us assume that, for each $s_0\in I$,  the image by $f$ of the leaf $s=s_0$ of $E_0$ is contained in a hypersphere $\mathbb{S}^{n}(\gamma(s_0), R(s_0))$ of $\mathbb{R}^{n+1}$ with center $\gamma(s_0)\in \mathbb{R}^{n+1}$ and radius $R(s_0)$.  The position vector 
$\zeta(x,s)=f(x, s)-\gamma(s)$ of  $\mathbb{S}^{n}(\gamma(s), R(s))$ at $f(x,s)$
with respect to $\gamma(s)$ can be written as
$
\zeta=R\cos \theta N+R\sin \theta f_*\delta,
$
where $\delta$ is a unit vector field (with respect to the metric induced by $f$) spanning $E_1$ and $\theta\in C^{\infty}(M)$ is the angle between $\zeta$ and $N$, which depends only on $s$ by Lemma~\ref{le:sphleaves}.
After changing 
$\delta$ by $-\delta$, if necessary,  we can assume that $f_*\delta=\rho^{-1}N_*\partial_s$.  Thus we can write
\begin{equation}\label{eq:paramf}
f=\gamma+\alpha N +\beta \rho^{-1}N_*\partial_s,
\end{equation}
where $\alpha=\alpha(s)=R\cos\theta$ and $\beta=\beta(s)=R\sin \theta$.
Now we impose $N$ to be normal to $f$. Since $\alpha$ and $\beta$ depend only on $s$, the condition $0=\left<f_*T, N\right>$ is identically satisfied. On the other hand,
$0=\left<f_*\partial_s, N\right>$ if and only if
\begin{equation}\label{eq:condgab}
\left<\gamma', N\right>+\alpha'=\beta\rho,
\end{equation}
where the prime means derivative with respect to $s$. 
  We have thus proved the converse statement of the following result.

\begin{theorem}\label{prop:sphleaves0} Let $N\colon M^n\to \mathbb{S}^n$ be a local diffeomorphism of a product manifold $M^n= M_0^{n-1}\times I$ such that the product net 
 $\mathcal{E} = (E_0, E_1)$  of $M^n$ is orthogonal with respect to the metric $d\sigma^2$ induced by $N$ (equivalently, $N$ is a local diffeomorphism whose induced metric $d\sigma^2$ is given as in \eqref{eq:ds2} for some $\rho\in C^{\infty}(M)$ and for some smooth family of metrics $g_s$ on $M_0^{n-1}$
indexed on $I$). If there exist a smooth curve $\gamma\colon I\to \mathbb{R}^{n+1}$ and $\alpha, \beta\in C^{\infty}(I)$ such that \eqref{eq:condgab} holds, then the map $f\colon M^n\to \mathbb{R}^{n+1}$ given by \eqref{eq:paramf} parametrizes a hypersurface of Enneper type with respect to $\mathcal{E}$ having $N$ as a Gauss map. 

Conversely, if $f\colon M^n= M_0^{n-1}\times I\to \mathbb{R}^{n+1}$ is a hypersurface of Enneper type with respect to the product net $\mathcal{E}= (E_0, E_1)$ of $M^n$ having
$N\colon M^n\to \mathbb{S}^n$ as a Gauss map,
then there exist a smooth curve $\gamma\colon I\to \mathbb{R}^{n+1}$ and $\alpha, \beta\in C^{\infty}(I)$ satisfying \eqref{eq:condgab} such that  $f$ is given by \eqref{eq:paramf}.
\end{theorem}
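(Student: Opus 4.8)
Since the converse statement has just been established, it remains to prove the direct statement, for which the plan is to verify, on the open set where $f$ defined by \eqref{eq:paramf} is an immersion, that $N$ is a Gauss map of $f$ and that the product net $\mathcal{E}=(E_0,E_1)$ satisfies conditions $(i)$, $(ii)$ and $(iii')$. Throughout I would use that $N\colon (M^n,d\sigma^2)\to\mathbb{S}^n\subset\mathbb{R}^{n+1}$ is a local isometry, so that the Gauss formula reads $\bar\nabla_X(N_*Y)=N_*(\nabla_XY)-\langle X,Y\rangle N$ and $\bar\nabla_XN=N_*X$, where $\bar\nabla$ is the flat connection of $\mathbb{R}^{n+1}$, $\nabla$ the Levi-Civita connection of $d\sigma^2$, and $\langle\cdot,\cdot\rangle$ is $d\sigma^2$, which agrees with the Euclidean metric under $N_*$.

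Writing $W=\rho^{-1}\partial_s$ for the unit vector field spanning $E_1$ and $Q=N_*W$, so that $\|Q\|=1$, $Q\perp N$ and $f=\gamma+\alpha N+\beta Q$, condition $(iii')$ is immediate: $\langle f-\gamma,f-\gamma\rangle=\alpha^2+\beta^2$ depends only on $s$, so each leaf $M_0\times\{s\}$ is carried by $f$ into the hypersphere centered at $\gamma(s)$ of radius $\sqrt{\alpha^2+\beta^2}$. Differentiating \eqref{eq:paramf} along $T\in\Gamma(E_0)$ and using the Gauss formula gives $f_*T=N_*(\alpha T+\beta\nabla_TW)$, where $\nabla_TW\in E_0$ since $W$ is unit; hence $f_*E_0\subseteq N_*E_0$, and in particular $\langle f_*T,N\rangle=0$. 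Along the $\partial_s$-direction one finds
\begin{equation*}
f_*W=\rho^{-1}\gamma'+(\rho^{-1}\alpha'-\beta)N+(\alpha+\rho^{-1}\beta')Q+\beta N_*\nabla_WW,
\end{equation*}
so pairing with $N$ yields $\langle f_*W,N\rangle=\rho^{-1}(\langle\gamma',N\rangle+\alpha')-\beta$, which vanishes exactly by \eqref{eq:condgab}. This shows that $N$ is a Gauss map of $f$. Condition $(ii)$ is then nearly free: the scalar second fundamental form of $f$ is $h(X,Y)=-\langle f_*X,N_*Y\rangle$, and $f_*T\in N_*E_0$ is orthogonal to $N_*W$ because $N$ preserves the orthogonality of $E_0$ and $E_1$, so $h(T,W)=0$.

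The main obstacle is condition $(i)$, the orthogonality of the net with respect to the metric induced by $f$. For this I would first compute $\nabla_WW$ from the block-diagonal metric $d\sigma^2=g_s+\rho^2ds^2$, obtaining $\nabla_WW=-\rho^{-1}\text{grad}_0\,\rho$, where $\text{grad}_0\,\rho$ denotes the $E_0$-component of the gradient of $\rho$. Projecting the displayed expression for $f_*W$ onto $N_*E_0$ then shows this component equals $N_*\big(\rho^{-1}(\gamma^\flat)_{E_0}+\beta\nabla_WW\big)$, where $\gamma^\flat$ is the vector field on $M$ determined by $N_*\gamma^\flat=\gamma'-\langle\gamma',N\rangle N$. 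Thus $(i)$ reduces to the identity $(\gamma^\flat)_{E_0}=\beta\,\text{grad}_0\,\rho$, which makes this component vanish, so that $f_*E_1\subseteq N_*E_1$ and hence $f_*E_0\perp f_*E_1$.

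The crux, and where I expect the real content to lie, is that \eqref{eq:condgab} holds identically on $M^n$, not merely along a single leaf. Since $\alpha,\beta$ and $\gamma'$ depend only on $s$, differentiating \eqref{eq:condgab} along an arbitrary $X\in\Gamma(E_0)$ and using $\bar\nabla_XN=N_*X$ gives $\langle\gamma',N_*X\rangle=\beta\,X(\rho)$, that is, $\langle(\gamma^\flat)_{E_0},X\rangle=\langle\beta\,\text{grad}_0\,\rho,X\rangle$ for all such $X$, which is precisely the required identity. Assembling these facts shows that, on the set of its regular points, $f$ satisfies $(i)$, $(ii)$ and $(iii')$ and has $N$ as a Gauss map, i.e., $f$ is a hypersurface of Enneper type with respect to $\mathcal{E}$, completing the proof.
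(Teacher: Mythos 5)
Your proof is correct, and for the part that actually needs proving it takes a genuinely different route from the paper's. First, your framing is consistent with the paper: the converse is indeed established in the discussion immediately preceding the theorem, and the paper's own proof environment, like yours, only treats the direct statement. You and the paper agree on two points: that \eqref{eq:condgab} is exactly the condition $\left<f_*\partial_s,N\right>=0$ making $N$ a unit normal (the $E_0$-derivatives of $f$ being automatically tangent to $\mathbb{S}^n$), and that $\|f-\gamma\|^2=\alpha^2+\beta^2$ yields condition $(iii')$. The divergence is in how $(i)$ and $(ii)$ are obtained. The paper observes in addition that $\left<(f-\gamma)/\|f-\gamma\|,N\right>=\alpha/\sqrt{\alpha^2+\beta^2}$ depends only on $s$, so each hypersphere meets $f(M)$ at a constant angle along the corresponding leaf; Lemma \ref{le:sphleaves} then gives the $A$-invariance of $E_0$ (condition $(ii)$), and orthogonality of $\mathcal{E}$ with respect to the $f$-metric is transferred from the $N$-metric via $N_*=-f_*A$. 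You instead compute $f_*$ explicitly: $(ii)$ comes essentially for free from $f_*E_0\subseteq N_*E_0\perp N_*E_1$, and $(i)$ follows because differentiating \eqref{eq:condgab} along $E_0$ (using that $\gamma,\alpha,\beta$ depend only on $s$) produces precisely the identity $(\gamma^\flat)_{E_0}=\beta\,\mathrm{grad}_0\,\rho$ that annihilates the $N_*E_0$-component of $f_*W$; I checked the formulas for $f_*W$ and $\nabla_WW=-\rho^{-1}\mathrm{grad}_0\,\rho$ and they are right. Your argument is longer but self-contained --- it bypasses Lemma \ref{le:sphleaves} entirely and makes explicit the inclusion $f_*E_i\subseteq N_*E_i$, which the paper leaves implicit --- whereas the paper's version is shorter because it reuses the constant-angle lemma it needs elsewhere anyway.
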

\begin{proof} If  $f\colon M^n\to \mathbb{R}^{n+1}$ is given by \eqref{eq:paramf} in terms of $N$ and 
$(\gamma, \alpha, \beta)$, then \eqref{eq:condgab} is precisely the condition for $N$ to be a Gauss map of $f$. 
Moreover, if that condition is satisfied, then 
\begin{equation}\label{eq:radius}\|f-\gamma\|^2=\alpha^2 +\beta^2\,\,\,\mbox{and}\,\,\,
\left<\frac{f-\gamma}{\|f-\gamma\|}, N\right>=\frac{\alpha}{\sqrt{\alpha^2+\beta^2}}.
\end{equation}
The  preceding equations show that the image by $f$ of each leaf of $E_0$ is contained in a hypersphere of 
$\mathbb{R}^{n+1}$ that intersects $f(M)$ at a constant angle. By Lemma~\eqref{le:sphleaves}, the distribution $E_0$ is  invariant under the shape operator of $f$, that is,  the second fundamental form of $f$ is adapted to the net  $\mathcal{E} = (E_0, E_1)$. Since the net $\mathcal{E}$ is orthogonal with respect to the metric induced by $N$, this implies that it is also  orthogonal with respect to the metric induced by $f$. Thus $f$ is a hypersurface of Enneper type with respect to $\mathcal{E}$ having $N$ as a Gauss map. 
\end{proof}

   Given a hypersurface of Enneper type $f\colon M^n= M_0^{n-1}\times I\to \mathbb{R}^{n+1}$ with respect to the product net $\mathcal{E}= (E_0, E_1)$ of $M^n$,  we 
determine next all hypersurfaces of Enneper type  with respect to $\mathcal{E}$ sharing the same Gauss map with~$f$.

\begin{proposition}\label{prop:enneperfamily} Let  $f\colon M^n= M_0^{n-1}\times I\to \mathbb{R}^{n+1}$ be a hypersurface of Enneper type with respect to the product net $\mathcal{E}= (E_0, E_1)$ of $M^n$. Assume that its Gauss map $N\colon M^n\to \mathbb{S}^n$ is a local diffeomorphism whose induced metric $d\sigma^2$ is given by \eqref{eq:ds2} and that $f$ is parametrized by \eqref{eq:paramf} in terms of $N$, a smooth regular curve $\gamma\colon I\to \mathbb{R}^{n+1}$ and $\alpha, \beta\in C^{\infty}(I)$ satisfying \eqref{eq:condgab}. Suppose also that there does not exist any leaf $s=s_0$ of $E_0$ whose image by $f$ is 
(an open piece of) a round $(n-1)$-dimensional sphere. Then any other hypersurface 
$\bar f\colon M^n\to \mathbb{R}^{n+1}$ of Enneper type with respect to $\mathcal{E}$ having $N$ as a Gauss map 
is parametrized  by \eqref{eq:paramf} in terms of a smooth curve $\bar\gamma\colon I\to \mathbb{R}^{n+1}$ and $\bar\alpha, \bar\beta\in C^{\infty}(I)$ which are  related to
$\gamma$, $\alpha$ and $\beta$ by
$$
\bar\beta=\lambda \beta,\,\,\,\bar\alpha'=\lambda \alpha'\,\,\,\mbox{and}
\,\,\,\bar\gamma'=\lambda\gamma'
$$
for some $\lambda\in C^{\infty}(I)$.
\end{proposition}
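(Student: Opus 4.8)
The plan is to parametrize $\bar f$ by means of the converse of Theorem~\ref{prop:sphleaves0}, then to extract pointwise linear relations between the triples $(\gamma,\alpha,\beta)$ and $(\bar\gamma,\bar\alpha,\bar\beta)$ by differentiating the two instances of \eqref{eq:condgab} along $E_0$, and finally to invoke the no-round-leaf hypothesis both to annihilate a degenerate term and to force $\beta$ to be nowhere zero, so that $\lambda=\bar\beta/\beta$ is the desired smooth factor. For the first step, since $\bar f$ is of Enneper type with respect to $\mathcal{E}$ and has the same Gauss map $N$ as $f$, the induced metric $d\sigma^2$ in \eqref{eq:ds2} and the unit field $\rho^{-1}N_*\partial_s$ are common to $f$ and $\bar f$; the converse part of Theorem~\ref{prop:sphleaves0} then produces $\bar\gamma\colon I\to\R^{n+1}$ and $\bar\alpha,\bar\beta\in C^\infty(I)$ such that $\bar f$ is given by \eqref{eq:paramf} and $\langle\bar\gamma',N\rangle+\bar\alpha'=\bar\beta\rho$. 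Thus both triples satisfy \eqref{eq:condgab}, for $f$ and $\bar f$ respectively.

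Next I would differentiate each instance of \eqref{eq:condgab} along an arbitrary $X\in E_0$. Since $\gamma,\alpha,\beta$ and their barred analogues depend only on $s$, this yields $\langle\gamma',N_*X\rangle=\beta\,X(\rho)$ and $\langle\bar\gamma',N_*X\rangle=\bar\beta\,X(\rho)$. Eliminating $X(\rho)$ gives $\langle\beta\bar\gamma'-\bar\beta\gamma',\,N_*X\rangle=0$ for all $X\in E_0$; that is, the fixed vector $\Xi:=\beta\bar\gamma'-\bar\beta\gamma'$ is orthogonal to $N_*E_0(x,s)$ at every point of each leaf.

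The crux is to show that for every leaf $s=s_0$ the subspaces $N_*E_0(x,s_0)$ span $\R^{n+1}$ as $x$ ranges over $M_0$, which then forces $\Xi=0$. If they failed to span, they would lie in a fixed linear hyperplane, so $N(\cdot,s_0)(M_0)$ would lie in an affine hyperplane $\mathcal{H}$ and hence be an open piece of the round sphere $\mathbb{S}^n\cap\mathcal{H}$. Writing $e$ for a unit normal of $\mathcal{H}$, one then has $(N_*E_0)^\perp=\text{span}\{N,e\}$ with $\langle N,e\rangle\equiv c_0$ constant on the leaf, so the unit field $\rho^{-1}N_*\partial_s$ is a multiple of $e-c_0 N$; substituting into \eqref{eq:paramf} makes $f$ restricted to the leaf affine in $N$, hence an open piece of a round $(n-1)$-sphere (the affine factor being nonzero because $f$ is an immersion), contradicting the hypothesis. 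Therefore the spanning holds and $\Xi=0$, i.e. $\beta\bar\gamma'=\bar\beta\gamma'$. The same spanning also shows that $\beta$ never vanishes: if $\beta(s_0)=0$, the identity $\langle\gamma',N_*X\rangle=\beta\,X(\rho)$ forces $\gamma'(s_0)$ to be orthogonal to the whole of $\R^{n+1}$, whence $\gamma'(s_0)=0$, against the regularity of $\gamma$.

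Finally, with $\lambda:=\bar\beta/\beta\in C^\infty(I)$ now well defined, $\beta\bar\gamma'=\bar\beta\gamma'$ becomes $\bar\gamma'=\lambda\gamma'$. Multiplying the $f$- and $\bar f$-instances of \eqref{eq:condgab} by $\bar\beta$ and $\beta$ respectively and subtracting, the $\langle\,\cdot\,,N\rangle$ terms cancel by virtue of $\beta\bar\gamma'=\bar\beta\gamma'$, leaving $\bar\beta\alpha'=\beta\bar\alpha'$ and hence $\bar\alpha'=\lambda\alpha'$; together with $\bar\beta=\lambda\beta$ this is exactly the asserted relation. The only genuine obstacle is the spanning statement of the third paragraph, which is precisely where the no-round-leaf hypothesis enters; once it is in place, everything else is elementary linear algebra in $\R^{n+1}$.
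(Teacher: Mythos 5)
Your proof is correct and follows essentially the same route as the paper's: both arguments reduce to the observation that a nonzero vector having constant inner product with $N$ along a leaf $s=s_0$ would place $N(\text{leaf})$, and hence $f(\text{leaf})$, inside an affine hyperplane, forcing $f(\text{leaf})$ to be an open piece of a round $(n-1)$-sphere and contradicting the hypothesis. Your reformulation via differentiation along $E_0$ and the spanning of the subspaces $N_*E_0(x,s_0)$, as well as your direct substitution into \eqref{eq:paramf} to see that such a leaf would be round, are only cosmetic variants of the paper's argument.
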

\begin{proof} Let $\bar f\colon M^n\to \mathbb{R}^{n+1}$ be another hypersurface of Enneper type with respect to $\mathcal{E}$ sharing the same Gauss map $N$ with $f$. By Theorem \ref{prop:sphleaves0},  it can be  parametrized  by \eqref{eq:paramf} in terms of a smooth curve $\bar\gamma\colon I\to \mathbb{R}^{n+1}$ and $\bar\alpha, \bar\beta\in C^{\infty}(I)$ satisfying 
 \begin{equation}\label{eq:cond2}
 \left<\bar\gamma', N\right>+\bar\alpha'=\bar\beta\rho.
 \end{equation}
Notice that if $\beta$ vanishes at some $s_0\in I$, since $\gamma'$ is nowhere vanishing by assumption then \eqref{eq:condgab} implies that the image by $N$ of the leaf $s=s_0$ of $E_0$
is contained in an affine hyperplane of $\mathbb{R}^{n+1}$ (hence is a $(n-1)$-dimensional round hypersphere  of $\mathbb{S}^n$). Hence also the 
image by $f$ of the leaf $s=s_0$ of $E_0$
is contained in an affine hyperplane of $\mathbb{R}^{n+1}$, and therefore it is an open piece of an $(n-1)$-dimensional round hypersphere given by its intersection with the hypersphere 
 $\mathbb{S}^{n}(\gamma(s_0), R(s_0))$ containing such image, contradicting our assumption.   Thus $\beta$ is nowhere vanishing and we can write
 $\bar\beta=\lambda\beta$ for some $\lambda\in C^{\infty}(I)$.  Comparing \eqref{eq:cond2} with  \eqref{eq:condgab} yields
\begin{equation}\label{eq:comp}
 \left<\bar\gamma' -\lambda \gamma', N\right>+\bar\alpha'-\lambda\alpha'=0.
\end{equation}
If $\bar\gamma' -\lambda \gamma'$ was nonzero for some $s_0\in I$, then arguing as before we would conclude that the image by $f$ of the leaf $s=s_0$ of $E_0$ would be an open piece of an $(n-1)$-dimensional round hypersphere, a contradiction.  Thus $\bar\gamma' -\lambda \gamma'$, and hence also $\bar\alpha' -\lambda \alpha'$ by \eqref{eq:comp}, must vanish everywhere.
\end{proof}

We are now able to show how any hypersurface of Enneper type  in $\mathbb{R}^{n+1}$ satisfying the assumptions of Proposition \ref{prop:enneperfamily} can be constructed
by means of a hypersurface of Enneper type with extrinsically planar leaves.

\begin{theorem}\label{thm:sphleaves} Let $f\colon M^n= M_0^{n-1}\times I\to 
\mathbb{R}^{n+1}$ be a hypersurface given as in Theorem \ref{thm:class}, let  
$\tilde f={\mathcal I}\circ f\colon M^n= M_0^{n-1}\times I\to 
\mathbb{R}^{n+1}$ be its composition with an inversion with respect to a hypersphere of unit radius centered at the origin and let $\tilde N \colon M^n\to \mathbb{S}^{n}$ be the Gauss map of $\tilde f$.  Let $\tilde f$ be 
parametrized by \eqref{eq:paramf} in terms of $\tilde N$ and a triple $(\tilde\gamma, \tilde\alpha, \tilde\beta)$  satisfying \eqref{eq:condgab}, where $\tilde\alpha, \tilde\beta\in C^{\infty}(I)$ and $\tilde\gamma\colon I\to \mathbb{R}^{n+1}$ is a smooth curve. Define a new triple  
$(\bar\gamma, \bar\alpha, \bar\beta)$ by  
\begin{equation}\label{newtriple}
\bar\beta=\lambda \tilde\beta,\,\,\,\bar\alpha'=\lambda \tilde\alpha'\,\,\,\mbox{and}\,\,\,\bar\gamma'=\lambda\tilde\gamma'
\end{equation}
for some  $\lambda\in C^{\infty}(I)$.
Then the hypersurface  $\bar f\colon M^n= M_0^{n-1}\times I\to 
\mathbb{R}^{n+1}$ parametrized by \eqref{eq:paramf} in terms of $\tilde N$ and 
$(\bar\gamma, \bar\alpha, \bar\beta)$ 
%satisfies conditions $(i)$ to $(iii)$.
is of Enneper type.

Conversely, any  hypersurface of Enneper type  in $\mathbb{R}^{n+1}$ satisfying the assumptions of Proposition \ref{prop:enneperfamily} can be constructed as above.
\end{theorem}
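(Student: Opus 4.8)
The plan is to read Theorem \ref{thm:sphleaves} as asserting that the three-step construction $(\text{extrinsically planar } f)\xrightarrow{\ \mathcal I\ }\tilde f\xrightarrow{\ \eqref{newtriple}\ }\bar f$ produces exactly the hypersurfaces of Enneper type satisfying the hypotheses of Proposition \ref{prop:enneperfamily}. Both the inversion $\mathcal I$ and the passage \eqref{newtriple} are reversible, so the direct statement reduces to checking that the output still obeys the Gauss-map condition \eqref{eq:condgab}, while the converse reduces to running the two steps backwards. The only substantial point will be to locate, inside the family of Proposition \ref{prop:enneperfamily}, a member whose leaf-spheres pass through a common point.

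For the direct statement I would first argue that $\tilde f=\mathcal I\circ f$ is itself of Enneper type. Conditions $(i)$ and $(ii)$ are conformally invariant: orthogonality of the product net survives a conformal change of the induced metric, and by Lemma \ref{le:sphleaves} condition $(ii)$ is equivalent, given that each leaf of $E_0$ lies in a hypersphere or hyperplane, to that hypersphere meeting $f$ at a constant angle, a property preserved by the M\"obius map $\mathcal I$. Since $f$ has extrinsically planar leaves, $\mathcal I$ carries the affine hyperplanes containing them, which miss the centre of inversion, to hyperspheres, so $\tilde f$ satisfies $(iii')$ and is of Enneper type with Gauss map $\tilde N$; by Theorem \ref{prop:sphleaves0} it is parametrized as in \eqref{eq:paramf} by a triple $(\tilde\gamma,\tilde\alpha,\tilde\beta)$ obeying \eqref{eq:condgab}. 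Multiplying that instance of \eqref{eq:condgab} by $\lambda$ and using \eqref{newtriple} yields $\langle\bar\gamma',\tilde N\rangle+\bar\alpha'=\bar\beta\rho$, i.e.\ \eqref{eq:condgab} for $(\bar\gamma,\bar\alpha,\bar\beta)$; the converse half of Theorem \ref{prop:sphleaves0} then shows that $\bar f$ is of Enneper type.

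For the converse, let $\bar f$ be as in Proposition \ref{prop:enneperfamily}, with data $(\bar\gamma,\bar\alpha,\bar\beta)$; recall from the proof of that proposition that the hypothesis on the leaves forces $\bar\beta$ to be nowhere vanishing. The goal is to produce a member $\tilde f$ of the same Gauss-map family whose leaf-spheres share a common point $P_0$: once this is done, $\mathcal I_{P_0}\circ\tilde f$ has planar leaves and is therefore of the form described in Theorem \ref{thm:class}, while $\bar f$ is recovered from $\tilde f$ by \eqref{newtriple} (with $\lambda$ replaced by its reciprocal), exactly as in the direct construction. In view of Proposition \ref{prop:enneperfamily}, such a member is given by a triple with $\tilde\beta=\lambda\bar\beta$, $\tilde\alpha'=\lambda\bar\alpha'$, $\tilde\gamma'=\lambda\bar\gamma'$; writing $w=\tilde\gamma-P_0$ and using \eqref{eq:radius}, the requirement that every leaf-sphere pass through $P_0$ reads $\|w\|^2=\tilde\alpha^2+\tilde\beta^2$, whose derivative, after dividing by $\lambda$, becomes $\langle\bar\gamma',w\rangle=\tilde\alpha\,\bar\alpha'+\lambda'\bar\beta^2+\lambda\,\bar\beta\bar\beta'$.

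The heart of the converse, and the step I expect to be the main obstacle, is to solve this last relation. Solving it for $\lambda'$, which is legitimate precisely because $\bar\beta$ is nowhere zero, turns it, together with $w'=\lambda\bar\gamma'$ and $\tilde\alpha'=\lambda\bar\alpha'$, into a \emph{linear} first-order system for the unknowns $(w,\tilde\alpha,\lambda)\in\R^{n+1}\times\R\times\R$, with coefficients depending only on $s$. A short computation shows that along any solution the derivative of $\|w\|^2-\tilde\alpha^2-\tilde\beta^2$ equals $2\lambda$ times the difference of the two sides of the displayed relation, and hence vanishes; so this quantity is constant, and choosing initial data at one point with $\lambda\neq 0$ and with that constant equal to zero gives, on a neighbourhood, a solution along which $\|w\|^2=\tilde\alpha^2+\tilde\beta^2$ holds identically. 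Recovering the centre curve as $\tilde\gamma=w+P_0$ for a fixed $P_0\in\R^{n+1}$, this identity says precisely that $P_0$ lies on every leaf-sphere of the corresponding $\tilde f$. The remaining checks are routine and local: that $(\tilde\gamma,\tilde\alpha,\tilde\beta)$ again satisfies \eqref{eq:condgab}, obtained by multiplying the relation for $\bar f$ by $\lambda$, so that $\tilde f$ is genuinely of Enneper type by Theorem \ref{prop:sphleaves0}, and that $\mathcal I_{P_0}\circ\tilde f$ has shape operator of rank $n$, so that Theorem \ref{thm:class} applies.
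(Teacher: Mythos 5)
Your proposal is correct and follows the same overall route as the paper: the direct statement is obtained by observing that multiplying \eqref{eq:condgab} by $\lambda$ shows the new triple again satisfies \eqref{eq:condgab} and then invoking the direct part of Theorem \ref{prop:sphleaves0}; the converse is obtained by locating, inside the Gauss-map family of Proposition \ref{prop:enneperfamily}, a member $\tilde f$ whose leaf-spheres all pass through a common point, and inverting about that point to land in the class of Theorem \ref{thm:class}. The one place where you genuinely diverge is in justifying the existence of the reparametrizing function $\lambda$ realizing the common-point condition $\|\tilde\gamma-P_0\|^2=\tilde\alpha^2+\tilde\beta^2$: the paper reduces this to an integro-algebraic equation in $\lambda$ and simply asserts that it has infinitely many solutions, whereas you differentiate the condition, use the nonvanishing of $\bar\beta$ (established in the proof of Proposition \ref{prop:enneperfamily}) to solve for $\lambda'$, and arrive at a linear first-order system for $(w,\tilde\alpha,\lambda)$ that admits $\|w\|^2-\tilde\alpha^2-\tilde\beta^2$ as a first integral; prescribing initial data on which this integral vanishes and $\lambda\neq 0$ then yields the desired $\lambda$ on a neighbourhood. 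This is a more rigorous treatment of the step the paper leaves essentially unproved, at the mild cost of being local in $s$. The items you defer as routine (that the new triple satisfies \eqref{eq:condgab}, and that ${\mathcal I}\circ\tilde f$ meets the rank hypothesis of the converse of Theorem \ref{thm:class}) are treated at the same level of detail, or less, in the paper itself.
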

\begin{proof} Since $f$ satisfies conditions $(i)$, $(ii)$ and $(iii'')$ above, it is clear that $\tilde f={\mathcal I}\circ f$ satisfies conditions $(i)$, $(ii)$ and $(iii')$, and hence is a hypersurface of Enneper type. If the triple  $(\tilde\gamma, \tilde\alpha, \tilde\beta)$  satisfies \eqref{eq:condgab}, then the same holds for the new triple 
$(\bar\gamma, \bar\alpha, \bar\beta)$ defined by  \eqref{newtriple}. 
Then the hypersurface  $\bar f\colon M^n= M_0^{n-1}\times I\to 
\mathbb{R}^{n+1}$ parametrized by \eqref{eq:paramf} in terms of $\tilde N$ and 
$(\bar\gamma, \bar\alpha, \bar\beta)$ 
%satisfies conditions $(i)$ to $(iii)$.
is of Enneper type by Theorem \ref{prop:sphleaves0}. 

To prove the converse, let  $\bar f\colon M^n= M_0^{n-1}\times I\to 
\mathbb{R}^{n+1}$ be a hypersurface of Enneper type with respect to the product net  $\mathcal{E} = (E_0, E_1)$  of $M^n$, parametrized by \eqref{eq:paramf} in terms of its Gauss map $\bar N\colon M^n\to \mathbb{S}^n$, a smooth regular curve $\bar\gamma\colon I\to \mathbb{R}^{n+1}$ and $\bar\alpha, \bar\beta\in C^{\infty}(I)$ satisfying \eqref{eq:condgab}. Given $\lambda\in C^{\infty}(I)$,  let $\tilde\gamma\colon I\to \mathbb{R}^{n+1}$ and $\tilde\alpha, \tilde\beta\in C^{\infty}(I)$ be related to $\bar \gamma, \bar\alpha, \bar\beta$ by 
$$
\lambda\bar\beta= \tilde\beta,\,\,\,\lambda\bar\alpha'=\tilde\alpha',\,\,\,\lambda\bar\gamma'=\tilde\gamma'.
$$
One can choose $\lambda$ so that  
\begin{equation}\label{eq:tilde}
\|\tilde\gamma\|^2=\tilde\alpha^2+\tilde\beta^2.
\end{equation}
In fact, this amounts to choosing $\lambda$ as one of the infinitely many solutions of the equation
$$
\|\int \lambda\bar\gamma'\|^2=\lambda \bar\beta^2+(\int \lambda\bar\alpha')^2.
$$
 Let $\tilde f \colon M^n\to 
\mathbb{R}^{n+1}$ be the hypersurface  given by \eqref{eq:paramf} in terms of $\bar N$ and 
$(\tilde\gamma, \tilde\alpha, \tilde\beta)$. By Theorem \ref{prop:sphleaves0}, $\tilde f$ also satisfies conditions $(i)$, $(ii)$ and $(iii')$ and has $\bar N$ as a Gauss map. Moreover, Eqs. \eqref{eq:radius} and \eqref{eq:tilde} imply that the hyperspheres that contain the images by $\tilde f$ of the leaves of $E_0$ all pass through the origin. Therefore, the composition $f=\mathcal{I}\circ \tilde f$ of $\tilde f$ with an inversion with respect to a hypersphere of unit radius centered at the origin  satisfies conditions $(i)$, $(ii)$ and $(iii'')$, and hence is given as in Theorem~\ref{thm:class}.
\end{proof}

\subsection{On some special hypersurfaces of Enneper type}

  In this subsection we discuss some special hypersurfaces $f\colon M^n= M_0^{n-1}\times I\to 
\mathbb{R}^{n+1}$ of Enneper type with respect to the product net  $\mathcal{E} = (E_0, E_1)$  of $M^n$, in particular those with the property that the hyperspheres containing the images by $f$ of the leaves of $E_0$ are concentric, which are ruled out in the converse statement of Theorem~\ref{thm:sphleaves}. 
  
   First recall that there exists a conformal diffeomorphism $\Phi\colon  \mathbb{S}^{n-1}\times\mathbb{R} \to \mathbb{R}^n \setminus\{0\}$ given by $(x,t) \mapsto e^t x$.
Similarly, there is also a  conformal diffeomorphism between
$\mathbb{H}^n \times \mathbb{S}^1$ and $\mathbb{R}^{n+1} \setminus \mathbb{R}^{n-1}$ given as follows. Let $e_0, e_1, \ldots, e_{n-1},e_n$ be a pseudo-orthonormal basis of the 
Lorentzian space $\R^{n+1}_1$ satisfying $\left<e_0,e_0\right>=0=\left<e_n, e_n\right>$, 
$\left<e_0, e_n\right>=-1/2$ and $\left<e_i,e_j\right>
=\delta_{ij}$ for $1\leq i\leq n-1$ and $0\leq j\leq n$.
 Then the map 
$
\bar\Phi \colon \mathbb{H}^n\times \mathbb{S}^{1}\subset \mathbb{R}^{n+1}_1 
\times \mathbb{R}^{2} \to \mathbb{R}^{n+1}\setminus \mathbb{R}^{n-1}
$
given by
\begin{equation}\label{eq:barphi}
\bar\Phi(x_0 e_0 + \ldots + x_{n} e_n, (y_1, y_2)) = \frac{1}{x_0}(x_1, 
\ldots, x_{n-1}, y_1, y_2)
\end{equation}
is a conformal diffeomorphism. Composing $\bar\Phi$ with the isometric covering map 
$$
\pi\colon \mathbb{H}^n\times \mathbb{R}\to \mathbb{H}^n\times \mathbb{S}^1: (x,t) \mapsto (x, (\cos t, \sin t))
$$ 
gives rise to a conformal covering map 
$ \Phi \colon \mathbb{H}^n\times \mathbb{R}\to \mathbb{R}^{n+1}\setminus \mathbb{R}^{n-1}$
given by
\begin{equation}\label{tpsi}
\Phi(x_0 e_0 + \ldots + x_n e_n, t) = 
\frac{1}{x_0}(x_1, \ldots, x_{n-1}, \cos t, \sin t).
\end{equation}

  Now let   $g\colon\, M^{n-1}\to \mathbb{Q}_\epsilon^n$  be a hypersurface, where $\mathbb{Q}_\epsilon^n$ stands for $\mathbb{S}^n$ if $\epsilon=1$, $\mathbb{R}^n$ if $\epsilon=0$  and  $\mathbb{H}^n$ if $\epsilon=-1$,   and let $g_s \colon\, M^{n-1}\to \mathbb{Q}_\epsilon^n\subset \R_\mu^{n+|\epsilon|}$, where $\mu=0$ if $\epsilon=0$ or $\epsilon=1$, and $\mu=1$ if $\epsilon=-1$, be the family of its parallel hypersurfaces, that is, 
$$g_s(x)=C_\epsilon(s)g(x)+S_\epsilon(s)N(x),$$
where $N$ is a unit normal vector field to $g$, 
$$
C_\epsilon(s)=\left\{\begin{array}{l}
\cos s, \,\,\,\mbox{if}\,\,\epsilon=1
\vspace{1.5ex}\\
1, \,\,\,\mbox{if}\,\,\epsilon=0
\vspace{1.5ex}\\
\cosh s, \,\,\,\mbox{if}\,\,\epsilon=-1
\end{array}\right.\,\,\,\,\,\,\,\,\mbox{and}\,\,\,\,\,\,\,\,\,\,\,
S_\epsilon(s)=\left\{\begin{array}{l}
\sin s, \,\,\,\mbox{if}\,\,\epsilon=1
\vspace{1.5ex}\\
s, \,\,\,\mbox{if}\,\,\epsilon=0
\vspace{1.5ex}\\
\sinh s, \,\,\,\mbox{if}\,\,\epsilon=-1.
\end{array}\right.
$$

 Define 
$$F\colon M^n=M^{n-1}\times I\to \mathbb{Q}_\epsilon^{n}\times \R\subset \R_\mu^{n+1+|\epsilon|}$$  by
\begin{equation}\label{eq:constantangle}F(x,s)=g_s(x)+a(s)\frac{\partial}{\partial t}
\end{equation}
for some smooth function $a\colon\, I\to \R$ with positive derivative on an open interval $I\subset \R$. Here, we regard $F$ as taking values in the underlying flat space.  
 
 In the next statement, we denote by $\Phi$  either the conformal diffeomorphism 
 $\Phi\colon \mathbb{Q}_\epsilon^n\times \R\to \R^{n+1}\setminus \{0\}$ if $\epsilon=1$, the conformal covering map $\Phi\colon \mathbb{Q}_{\epsilon}^n\times \R\to \R^{n+1}\setminus \R^{n-1}$ if $\epsilon=-1$ or the standard isometry $\Phi\colon \mathbb{Q}_\epsilon^n\times \R\to  \R^{n+1}$ if $\epsilon=0$. 
 
 \begin{theorem} \label{thm:joach} Let $f\colon M^n=M^{n-1}\times I\to \mathbb{R}^{n+1}$ be a hypersurface of Enneper type with respect to the product net $\mathcal{E} = (E_0, E_1)$ of $M^n$. Assume that the images by $f$ of the leaves of $E_0$ are contained in either
 \begin{itemize}
 \item[(a)] concentric hyperspheres.
 \item[(b)]  parallel affine hyperplanes.
 \item[(c)] affine hyperplanes  intersecting along an affine $(n-1)$-dimensional subspace.
 \end{itemize}
 Then, assuming $M^n$ simply connected in case $(c)$, there exists   $F\colon M^n\to \mathbb{Q}_\epsilon^n\times \R$  given by (\ref{eq:constantangle}) in terms of a hypersurface $g\colon\, M^{n-1}\to \mathbb{Q}_\epsilon^n$, with $\epsilon=1$ in case $(a)$,  $\epsilon=0$ in case $(b)$ and  $\epsilon=-1$ in case $(c)$, such that $f=\Phi \circ F$. 
\end{theorem}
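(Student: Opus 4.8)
The plan is to invert the maps $\Phi$ and show that the resulting immersion into $\mathbb{Q}_\epsilon^n\times\R$ has the slice structure \eqref{eq:constantangle}. First I would normalize the configuration by an isometry of $\R^{n+1}$: in case $(a)$ so that the common center of the concentric hyperspheres is the origin, in case $(b)$ so that the parallel hyperplanes are the level sets of the last coordinate, and in case $(c)$ so that the common $(n-1)$-dimensional affine subspace is the standard $\R^{n-1}$. With these normalizations the one-parameter family of hyperspheres (resp.\ hyperplanes) containing the images $f(M^{n-1}\times\{s\})$ of the leaves of $E_0$ is exactly the family of images under $\Phi$ of the slices $\mathbb{Q}_\epsilon^n\times\{t\}$: the sphere of radius $e^{t}$ for $\epsilon=1$, the hyperplane of height $t$ for $\epsilon=0$, and the hyperplane through $\R^{n-1}$ making angle $t$ for $\epsilon=-1$, as one reads off from the definitions of $\Phi$ recalled above (see \eqref{tpsi}). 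In cases $(a)$ and $(b)$, where $\Phi$ is a diffeomorphism, I set $F=\Phi^{-1}\circ f$; in case $(c)$, where $\Phi$ is only a covering, I use the hypothesis that $M^n$ is simply connected to lift $f$ to an immersion $F\colon M^n\to\mathbb{H}^n\times\R$ with $\Phi\circ F=f$ (here $f$ avoids the axis $\R^{n-1}$, so it indeed takes values in the base of the covering).

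Since each leaf $M^{n-1}\times\{s\}$ is carried by $f$ into a single hypersphere or hyperplane of the family, its image under $F$ lies in a single slice, so the $\R$-coordinate of $F$ is a function of $s$ alone; thus I may write $F(x,s)=(G(x,s),a(s))$ with $G(\,\cdot\,,s)\colon M^{n-1}\to\mathbb{Q}_\epsilon^n$ a smooth family of hypersurfaces and $a\in C^\infty(I)$. Transversality of $f$ to the slices forces $a'\neq 0$, and I orient $s$ so that $a'>0$. The heart of the proof is to show that $G(\,\cdot\,,s)$ is a family of parallel hypersurfaces of $\mathbb{Q}_\epsilon^n$. Because each $\Phi$ is conformal, the metric induced by $f$ on $M^n$ is conformal to the metric induced by $F$ as a hypersurface of $\mathbb{Q}_\epsilon^n\times\R$; in particular condition $(i)$ (orthogonality of $\mathcal{E}=(E_0,E_1)$) persists for $F$. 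Writing $F_*E_0=(G_*TM_0,0)$ and $F_*\partial_s=(\partial_s G,a')$, orthogonality gives $\langle\partial_s G,G_*X\rangle=0$ for all $X\in TM_0$, so $\partial_s G=\psi\,N_G$ for a unit normal field $N_G$ of $G(\,\cdot\,,s)$ in $\mathbb{Q}_\epsilon^n$ and some $\psi\in C^\infty(M)$.

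To identify $\psi$ I would use condition $(ii)$ together with the conformal invariance of the adaptedness of the second fundamental form to an orthogonal net: under a conformal change of the ambient metric the second fundamental form acquires only a term proportional to the induced metric, which vanishes on orthogonal pairs, so that $\alpha_F(X,\partial_s)=0$ for all $X\in E_0$. Computing this mixed term from $\partial_s G=\psi N_G$ and $\nabla_{G_*X}^{\mathbb{Q}}N_G=-G_*A^{G}X$ yields $\alpha_F(X,\partial_s)=\tfrac{a'}{\sqrt{(a')^2+\psi^2}}\,X(\psi)$, whence $X(\psi)=0$ for every $X\in E_0$ and therefore $\psi=\psi(s)$. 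On the open set where $\psi\neq 0$, reparametrizing $I$ by the arclength $u=\int\psi\,ds$ of the normal geodesics (with $N_G$ chosen so that $\psi>0$) turns the normal variation into the unit-speed flow $\partial_u G=N_G$, whose solution in the space form $\mathbb{Q}_\epsilon^n$ is exactly the parallel family $G(\,\cdot\,,u)=C_\epsilon(u)\,g+S_\epsilon(u)\,N=g_u$ of $g:=G(\,\cdot\,,0)$. Reparametrizing $a$ accordingly (still with positive derivative), this is precisely the normal form \eqref{eq:constantangle}, and $f=\Phi\circ F$ as claimed.

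I expect the main obstacle to be the step $\psi=\psi(s)$, i.e.\ upgrading the orthogonality-derived normal variation $\partial_s G=\psi N_G$ to a genuine parallel family: it is here that condition $(ii)$ must be used, and the cleanest route is the conformal invariance of line-of-curvature nets, which lets me transfer the adaptedness from $f$ to $F$ without recomputing shape operators through $\Phi$. A secondary point to handle with care is the degenerate locus $\psi=0$, where the leaves do not move in $\mathbb{Q}_\epsilon^n$ (generalized cones or cylinders): these must be excluded for the unit-speed reparametrization, so the conclusion is really about the open subset where the leaves vary.
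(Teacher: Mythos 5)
Your argument is correct, and its first half --- normalizing so that the one-parameter family of hyperspheres or hyperplanes containing the leaves of $E_0$ becomes exactly the image under $\Phi$ of the slices $\mathbb{Q}_\epsilon^n\times\{t\}$, passing to $F=\Phi^{-1}\circ f$ (or to a lift of $f$ in case $(c)$, using simple connectedness), and concluding that the $\R$-coordinate of $F$ depends only on $s$ --- is the same as the paper's. You diverge in the second half. The paper notes that, $\Phi$ being conformal, orthogonality and adaptedness transfer to $F$, deduces that the tangent component $T$ of $\partial_t$ is a principal direction of $F$, and then invokes Theorem 1 of \cite{t5}, which classifies such hypersurfaces of $\mathbb{Q}_\epsilon^n\times\R$ precisely by the normal form \eqref{eq:constantangle}. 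You instead reprove the relevant special case of that classification from scratch: orthogonality of the net gives $\partial_s G=\psi N_G$; conformal invariance of adaptedness gives $\alpha_F(X,\partial_s)=\frac{a'}{\sqrt{(a')^2+\psi^2}}\,X(\psi)=0$, hence $\psi=\psi(s)$ on each (connected) leaf; and integrating the unit-speed normal flow in the space form produces the parallel family $g_s$. Your computation of the mixed second fundamental form is correct, and your route is self-contained where the paper's rests on an external citation; it also makes explicit exactly where conditions $(i)$ and $(ii)$ enter. The price is that you must exclude the degenerate locus $\psi=0$ (cylinders $g\times I$, where the leaves do not move in $\mathbb{Q}_\epsilon^n$), which you correctly flag; the paper's appeal to \cite{t5} hides the same exclusion in the hypotheses of the quoted theorem, since a hypersurface of the form \eqref{eq:constantangle} with $a'>0$ never has horizontal unit normal. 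Both proofs also share the minor locality caveat that $f$ must avoid the center in case $(a)$ and the axis $\R^{n-1}$ in case $(c)$ for $\Phi^{-1}$, respectively the lift, to be defined.
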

\begin{proof} Let $ \Psi\colon  \mathbb{R}^{n+1}\setminus \mathbb{R}^{k-1} \to 
\mathbb{H}^k \times \mathbb{S}^{n-k+1} \subset \R^{k+1}_1 \times \R^{n-k+2}=\mathbb{R}_1^{n+3}$ be  the map  given by  
$$ \Psi(y_1, \ldots, y_{n+1}) = 
\frac{1}{\sqrt{y_k^2+\cdots +y_{n+1}^2}} \left( e_0 + \sum_{i=1}^{k-1} y_ie_i 
+ \left( \sum_{i=1}^{n+1} y_i^2 \right) e_k, (y_k, \ldots, y_{n+1}) \right), $$
where $\mathbb{R}^{k-1}=\{(y_1, \ldots, y_{n+1}) \in \R^{n+1}\,:\, y_k=\cdots =y_{n+1}=0\}$ and $e_0, \ldots, , e_{k}$ is a 
pseudo-orthonormal basis of $\R_1^{k+1}$ with $\left<e_0, e_0\right>=0=\left<e_{k}, e_{k}\right>$, $\left<e_0, e_{k}\right>=-1/2$ and $\left<e_i,e_j\right>=\delta_{ij}$ for $1\leq i\leq k-1$ and $0\leq j\leq k$.  It is a conformal diffeomorphism whose conformal factor $\varphi\in C^{\infty}(\mathbb{R}^{n+1}\setminus \mathbb{R}^{k-1})$ is 
$\varphi(y_1,  \ldots, y_{n+1})=(\sum_{i=k}^{n+1} y_i^2)^{1/2}$. For $k=n$,  $\Psi$ is the inverse of the conformal diffeomorphism 
$\bar\Phi \colon \mathbb{H}^n\times \mathbb{S}^{1} \to \mathbb{R}^{n+1}\setminus 
\mathbb{R}^{n-1}$ given by \eqref{eq:barphi}.  Notice that $\Psi$ takes each half-space of a $k$-dimensional subspace $\R^k$ containing $\R^{k-1}$ onto a slice 
$\mathbb{H}^k\times \{x\}$ of 
$\mathbb{H}^k \times \mathbb{S}^{n-k+1}$, while $(n-k+1)$-dimensional spheres  centered at
 $\R^{k-1}$ lying in subspaces $\R^{n-k+2}$ orthogonal to $\R^{k-1}$ are mapped onto slices
  $\{x\}\times \mathbb{S}^{n-k+1}$ of $\mathbb{H}^k \times \mathbb{S}^{n-k+1}$. 
  
  Assume first that condition $(a)$ is satisfied. Let $\Psi$ be the diffeomorphism defined above for $k=1$, and let $\hat \Psi$ be its composition with the isometry
  $$
  (y^{-1}e_0+ye_1, x)\in \mathbb{H}^1\times \mathbb{S}^{n}\; (y>0)\mapsto
  (x, \log y)\in \mathbb{S}^{n}\times \R. 
  $$ 
We can assume that the hyperspheres containing  the images by $f$ of the leaves of $E_0$ are centered at the origin. Then the images by $F=\hat\Psi\circ f\colon M^n\to \mathbb{S}^n\times \R$ of the leaves of $E_0$ are contained in the slices 
   $\mathbb{S}^{n}\times \{t\}$, $t\in \R$. This means that the height function
   $(x,s) \mapsto \left<F(x,s),\partial_t\right>$ depends only on $s$, where $\partial_t$ is a unit vector field tangent to the factor  $\R$. Differentiating with respect to $X\in E_0$ gives
   $0=\left<F_*X, \partial_t\right>=\left<F_*X, F_*T\right>$, where $\partial_t=F_*T+\left<\partial_t, N\right>N$. Since $\hat\Psi$ is a conformal diffeomorphism, the metrics induced by $f$ and $F$ are conformal, hence $T$ spans $E_1$. Moreover, since conformal diffeomorphisms preserve principal directions and the integral curves of $E_1$ are lines of curvature of $f$, it follows that $T$ is a principal direction of $F$.

  Now assume that condition $(b)$ holds. Denoting by $\Psi\colon \R^{n+1}\to \mathbb{R}^n\times \R\to \R^{n+1}$ the standard isometry, the images under $F=\Psi\circ f$ of the leaves of $E_0$ are contained in slices $\mathbb{R}^n\times \{t\}$, $t\in \R$. Arguing as in the preceding paragraph, this means that the tangent component of the unit vector field tangent to the factor $\R$ is a principal direction of $F$.
  
  Finally,  suppose that condition $(c)$ holds and let $\Psi$ be the diffeomorphism defined in the preceding paragraph for $k=n$. Then the images by $\hat F=\Psi\circ f\colon M^n\to \mathbb{H}^n\times \mathbb{S}^1$ of the leaves of $E_0$ are contained in slices 
   $\mathbb{H}^{n}\times \{x\}$, $x\in \mathbb{S}^1$. Let $F\colon M^n\to \mathbb{H}^n\times \mathbb{R}$ be such that $\hat F=\pi\circ F$, where $\pi\colon \mathbb{H}^n\times \mathbb{R}\to \mathbb{H}^n\times \mathbb{S}^1$ is the covering map $(x, t)\mapsto (x, (\cos t, \sin t))$. Then the images by $F$ of the leaves of $E_0$ are contained in slices 
   $\mathbb{H}^{n}\times \{t\}$, $t\in \mathbb{R}$. Arguing as in case $(a)$ we conclude that $T$ is a principal direction of $F$, where $\partial_t=F_*T+\left<\partial_t, N\right>N$, with $\partial_t$  a unit vector field tangent to $\R$.
   
   In either of the preceding cases, it follows from Theorem $1$ in \cite{t5} that the map $F$  is given by (\ref{eq:constantangle}) in terms of a hypersurface $g\colon\, M^{n-1}\to \mathbb{Q}_\epsilon^n$, with $\epsilon=1$ in case $(a)$,  $\epsilon=0$ in case $(b)$ and  $\epsilon=-1$ in case $(c)$. In either case we have $f=\Phi\circ F$, thus the statement follows.
   \end{proof}
   
   Surfaces of Enneper type in $\R^3$ satisfying condition $(c)$ in the preceding theorem are known in the literature as \emph{Joachimsthal surfaces}. These are the surfaces whose lines of curvature correspondent to one of the principal curvatures are contained in planes that intersect along a common line, whereas 
the lines of curvature correspondent to the other principal curvature lie on spheres centered on that line. The following consequence of Theorem \ref{thm:joach} shows how any such surface arises.

 \begin{corollary} \label{cor:joach} Let   $\gamma\colon J\subset \R\to \mathbb{H}^2$  be a
 unit-speed curve and let $\gamma_s \colon J\subset \R\to \mathbb{H}^2\subset \R_1^3$ be the family of its parallel curves, that is, 
$$\gamma_s(t)=\cosh(s)\gamma(t)+\sinh(s)\gamma(t)\wedge \gamma'(t).$$
 Define 
$F\colon J\times I\to \mathbb{H}^{2}\times \R\subset \R_1^4$  by
%\begin{equation}\label{eq:constantangleb}
$$
F(t,s)=\gamma_s(t)+a(s)\frac{\partial}{\partial t},
$$
%\end{equation}
where  $I\subset \R$ is an open interval and $a\in C^{\infty}(I)$ has positive derivative. Then, on the subset $M^2\subset J\times I$ of its regular points, the map $f=\Phi\circ F\colon M^2\to \R^3\setminus \R$, where $\Phi\colon \mathbb{H}^{2}\times \R \to \R^3\setminus \R$ is the conformal covering map given by \eqref{tpsi}, defines a Joachimsthal surface.

   Conversely, any  Joachimsthal surface in $\R^3$ can be parametrized in this way.
\end{corollary}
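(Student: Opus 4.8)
The plan is to obtain the corollary by specializing Theorem~\ref{thm:joach} to the case $n=2$ with condition $(c)$, after recording the elementary facts that make the general statement collapse to the one displayed here. Throughout I use the identification, recalled in the paragraph preceding the corollary, of a Joachimsthal surface with a surface of Enneper type in $\R^3$ (so $n=2$) for which condition $(c)$ holds, namely one whose lines of curvature corresponding to one of its principal curvatures lie in affine planes sharing a common line $\ell$ (the complementary family then lying on spheres centered on $\ell$, as recalled there).

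For the converse assertion, let $f$ be a Joachimsthal surface; working on a simply connected piece of its regular set, it is a surface of Enneper type with respect to the product net $\mathcal{E}=(E_0,E_1)$ satisfying $(c)$. By Theorem~\ref{thm:joach} with $\epsilon=-1$, there is a map $F\colon M^1\times I\to\mathbb{H}^2\times\R$ of the form \eqref{eq:constantangle}, built from a hypersurface $g\colon M^1\to\mathbb{H}^2$ and a function $a$ with positive derivative, such that $f=\Phi\circ F$ with $\Phi$ the conformal covering map \eqref{tpsi}. Since $n=2$, the factor $M^1$ is one-dimensional, so $g$ is a curve which I reparametrize to unit speed and rename $\gamma\colon J\to\mathbb{H}^2$.

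It then remains to rewrite the parallel-hypersurface formula $g_s=C_\epsilon(s)g+S_\epsilon(s)N$ in this one-dimensional hyperbolic setting. Here $C_{-1}=\cosh$ and $S_{-1}=\sinh$, so the only point to check is that the unit normal $N$ of $\gamma$ inside $\mathbb{H}^2\subset\R_1^3$ equals $\gamma\wedge\gamma'$. This is a short Lorentzian computation: $\gamma\wedge\gamma'$ is orthogonal to both $\gamma$ and $\gamma'$, hence normal to $\gamma$ in $\mathbb{H}^2$, and from $\langle\gamma,\gamma\rangle=-1$, $\langle\gamma',\gamma'\rangle=1$ and $\langle\gamma,\gamma'\rangle=0$ one gets $\langle\gamma\wedge\gamma',\gamma\wedge\gamma'\rangle=1$, so it is a unit spacelike field. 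Thus $g_s=\gamma_s$ with $\gamma_s(t)=\cosh(s)\gamma(t)+\sinh(s)\,\gamma(t)\wedge\gamma'(t)$, and \eqref{eq:constantangle} turns into the displayed expression for $F$, establishing the converse.

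For the direct statement I run this chain backwards. Given $F$ of the form \eqref{eq:constantangle}, Theorem~$1$ in \cite{t5} (in its ``if'' direction) shows that the tangent component $T$ of $\partial_t$ is a principal direction of $F$ and that the leaves $s=\mathrm{const}$ are carried into the slices $\mathbb{H}^2\times\{t\}$. As $\Phi$ is a conformal covering map, it preserves orthogonality and lines of curvature, and it maps these slices onto the half-planes bounded by the common line $\R\subset\R^3$; hence, on the regular set, $f=\Phi\circ F$ is parametrized by lines of curvature, one family of which lies in planes through $\R$, that is, $f$ is a surface of Enneper type satisfying $(c)$, a Joachimsthal surface. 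The only real bookkeeping---and the step I expect to demand the most care---is verifying that $\Phi$ sends the slices $\mathbb{H}^2\times\{t\}$ precisely onto the half-planes through the fixed line $\R$ and preserves the net $(E_0,E_1)$; all the genuine analytic content is already supplied by Theorem~\ref{thm:joach} and \cite{t5}.
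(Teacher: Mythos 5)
Your proposal is correct and follows exactly the route the paper intends: the corollary is presented there as an immediate consequence of Theorem~\ref{thm:joach} specialized to $n=2$ in case $(c)$, and you supply precisely the missing bookkeeping (the identification $N=\gamma\wedge\gamma'$ as the unit normal of a curve in $\mathbb{H}^2\subset\R^3_1$, and the direct statement via the ``if'' direction of Theorem~$1$ in \cite{t5} together with the fact that the conformal covering map $\Phi$ preserves lines of curvature and carries slices $\mathbb{H}^2\times\{t\}$ onto half-planes through the common line). No gaps.
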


\section{A decomposition theorem}

  The aim of this last section is to prove the following decomposition
theorem for immersions of product manifolds.  

\begin{theorem}\label{thm:decomp}
Let $f\colon M = \prod_{i=0}^r M_i \to \R^m$ be a conformal immersion with conformal 
factor $\lambda\in C^{\infty}(M)$ of a product manifold $M$ 
endowed with a polar metric. Assume that the second fundamental form of $f$ is adapted to 
the product net $\mathcal{E} = (E_i)_{i=0, \cdots, r}$ of $M$. 
If $\text{dim}\,M_0=1$, suppose further that $\text{Hess}\,\lambda$ is adapted
to the net $(E_0, E_0^\perp)$. Then $f$ is a Ribaucour partial tube over 
an immersion $\tilde{f}\colon \tilde{M} = \prod_{a=1}^r M_a \to \R^m$ given in one of the following ways:
\begin{enumerate}
\item $\tilde f = \tau \circ \bar{f}$, where  $\bar{f}$ is an extrinsic product of substantial immersions 
into either $\R^m$ or $\mathbb{S}^m_c$ and $\tau$ is, respectively, a conformal transformation of $\R^m$
or a conformal diffeomorphism of $\mathbb{S}_c^m$ (with one point removed) onto $\R^m$. 
\item $\tilde f = \Theta \circ (f_1 \times \bar{f})$, where 
$\Theta\colon \mathbb{H}_{-c}^k \times \mathbb{S}_{c}^{m-k} \to \mathbb{R}^m$ is a conformal diffeomorphism (onto the complement of a $(k-1)$-dimensional subspace),
and, after possibly relabelling factors, $f_1\colon M_1 \to \mathbb{H}_{-c}^k$ is an isometric immersion
and $\bar{f}\colon \prod_{a=2}^r M_i \to \mathbb{S}_{c}^{m-k}$ is an extrinsic product of substantial isometric 
immersions.
\end{enumerate} 
\end{theorem}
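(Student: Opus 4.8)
The plan is to regard $M$ as the product $M_0\times\tilde M$ of the distinguished factor $M_0$ and $\tilde M=\prod_{a=1}^r M_a$, and to verify that the immersion $f$, together with the net $(E_0,E_0^\perp)$ where $E_0^\perp=\oplus_{a=1}^r E_a$, satisfies conditions $(i)$--$(iii)$ of Corollary \ref{cor:rpt} for the metric $g_2$ induced by $f$. Once this is done, Theorem \ref{rpt} immediately yields that $f$ is a Ribaucour partial tube over the restriction $\tilde f:=f\circ\mu_{\bar x_0}$ of $f$ to a leaf $\{\bar x_0\}\times\tilde M$ of $E_0^\perp$, and it only remains to identify the structure of $\tilde f$. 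Conditions $(i)$ and $(ii)$ come essentially for free: since the polar metric $g_1$ makes the product net orthogonal and a conformal change preserves orthogonality, $(E_0,E_0^\perp)$ is orthogonal for $g_2$; and since $\alpha_f$ is adapted to the full net $\mathcal{E}=(E_i)_{i=0}^r$, in particular $\alpha_f(E_0,E_a)=0$ for every $a\geq 1$, whence $\alpha_f(E_0,E_0^\perp)=0$.

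The crux is condition $(iii)$, the sphericity of $E_0$ with respect to $g_2$. Because $E_0$ is totally geodesic for the polar metric $g_1$, relation \eqref{e:levicivitarelationconf} shows that $E_0$ is umbilical for $g_2$ with mean curvature vector field $\eta=-(\mathrm{grad}_2\log\lambda)_{E_0^\perp}$, and by Proposition \ref{prop:hess} sphericity is equivalent to $\mathrm{Hess}\,\lambda$ being adapted to $(E_0,E_0^\perp)$. When $\dim M_0=1$ this is exactly the extra hypothesis, so there is nothing to prove. When $\dim M_0\geq 2$ I would argue that it holds automatically, as follows. The Gauss equation of $f$ in the flat ambient space together with $\alpha_f(E_0,E_0^\perp)=0$ gives $\langle R^M(S,T)U,W\rangle=\langle\alpha_f(S,W),\alpha_f(T,U)\rangle-\langle\alpha_f(S,U),\alpha_f(T,W)\rangle=0$ for all $S,T,U\in E_0$ and $W\in E_0^\perp$, so the $E_0^\perp$-component of $R^M(S,T)U$ vanishes. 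On the other hand, the Codazzi equation of the umbilical leaf $M_0\times\{\tilde x\}$ in $(M,g_2)$ reads
\begin{equation*}
(R^M(S,T)U)_{E_0^\perp}=\langle T,U\rangle(\nabla_S\eta)_{E_0^\perp}-\langle S,U\rangle(\nabla_T\eta)_{E_0^\perp}.
\end{equation*}
Combining the two and choosing, for an arbitrary nonzero $T\in E_0$, a vector $S\in E_0$ with $\langle S,T\rangle=0$ and $S\neq 0$ (possible precisely because $\mathrm{rank}\,E_0\geq 2$) and then setting $U=S$, one obtains $(\nabla_T\eta)_{E_0^\perp}=0$, so that $E_0$ is spherical. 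This dichotomy is exactly what forces the separate hypothesis in the one-dimensional case.

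It then remains to analyze $\tilde f\colon\tilde M=\prod_{a=1}^r M_a\to\R^m$. Restricting $g_1$ to the leaf $\{\bar x_0\}\times\tilde M$ turns the polar metric into the Riemannian product metric $\sum_{a=1}^r g_a(\bar x_0)$, so $\tilde f$ is a conformal immersion of the Riemannian product $\tilde M$ with conformal factor $\tilde\lambda=\lambda\circ\mu_{\bar x_0}$. Moreover $\tilde f$ has second fundamental form adapted to the product net $(E_a)_{a=1}^r$ of $\tilde M$: for $a\neq b$ the composition formula gives $\alpha_{\tilde f}(X_a,X_b)=\alpha_f(X_a,X_b)+f_*(\nabla^2_{X_a}X_b)_{E_0}$, the first term vanishing by adaptedness of $\alpha_f$ and the second because, using \eqref{e:levicivitarelationconf} and the Koszul formula for the polar metric, the $E_0$-component of $\nabla^2_{X_a}X_b$ is zero. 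Thus $\tilde f$ is a conformal immersion of a Riemannian product manifold whose second fundamental form is adapted to the product net, and the Moore-type decomposition theorem for such immersions (the conformal version in \cite{t3}) yields precisely the two alternatives $(1)$ and $(2)$ in the statement, according to whether the hyperbolic factor is present. I expect the main obstacle to be the verification of condition $(iii)$ --- in particular isolating the role of $\dim M_0\geq 2$ through the combined Gauss--Codazzi identity above --- while the last step is a matter of checking that the product structure and the adaptedness of the second fundamental form survive restriction to a leaf, after which the cited theorem applies directly.
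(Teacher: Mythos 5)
Your proof is correct and follows essentially the same route as the paper's: reduce to the two-factor splitting $M_0\times\tilde M$, verify conditions $(i)$--$(iii)$ of Corollary \ref{cor:rpt} so that Theorem \ref{rpt} applies, and then identify $\tilde f=f\circ\mu_{\bar x_0}$ as a conformal immersion of a Riemannian product whose second fundamental form is adapted to the product net, to which the classification theorem applies (the paper invokes Theorem 5 of \cite{t2} here rather than \cite{t3}). The only substantive difference is that where the paper cites Lemma 12 of \cite{t2} for the sphericity of $E_0$ when $\dim M_0\geq 2$, you prove it inline via the Gauss and Codazzi equations, and that argument is sound.
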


\begin{proof}
First, suppose that $r=1$. In this case, by Proposition \ref{p:polarmetricumblicialnet}, 
the assumption that the metric induced by $f$ is conformal to a polar metric says that 
$E_0$ is an umbilical distribution.
It follows from Lemma $12$ of \cite{t2} if $\text{dim}\,M_0\geq 2$, or the assumption that 
$\text{Hess}\,\lambda$ is adapted to the net $(E_0, E_1)$ combined with Proposition~\ref{prop:hess} 
if $\text{dim}\,M_0=1$, that $E_0$ is indeed spherical. 
Since the second fundamental form of $f$ is adapted with respect to the product net of $M$,
the statement in this case is a consequence of Theorem \ref{rpt}.

  If $r\geq 2$ is arbitrary, apply the case $r=1$ just proved to $f$ regarded as an immersion of 
$M_0 \times \tilde{M}$ into $\mathbb{R}^m$, where $\tilde{M}= \prod_{a=1}^r M_i$. 
It follows that $f$ is a Ribaucour partial tube over  
$\tilde{f}\colon \tilde{M}\to \R^m$ given by $\tilde{f}=f\circ \mu_{\bar{x}_0}$ for some
$\bar{x}_0\in M_0$, where $\mu_{\bar{x}_0}\colon \tilde{M}\to M$ is the inclusion given by 
$\mu_{\bar{x}_0}(\tilde{x})=(\bar{x}_0,\tilde{x})$.
The  metric $\tilde{g}$ induced on $\tilde{M}$ by $\mu_{\bar{x}_0}$ from the metric $g$ of $M$ is 
\begin{align*}
\tilde{g} &= \mu_{\bar{x}_0}^*g= (\lambda \circ \mu_{\bar{x}_0})\mu_{\bar{x}_0}^* \big( \pi_0^*g_0 
+ \sum_{a=1}^r \pi_a^*(g_a \circ \pi_0) \big)= (\lambda \circ \mu_{\bar{x}_0})\sum_{a=1}^r \tilde{\pi}_a^*g_a(\bar{x}_0),
\end{align*}
where $\tilde{\pi}_a\colon \tilde{M} \to M_a$ is the projection.  
Hence $\tilde{g}$ is conformal to a Riemannian product metric.  

We now claim that the second fundamental form of  $\tilde{f} = f \circ \mu_{\bar{x}_0}$ is adapted to the product net on $\tilde{M}$.  
We have
\begin{align*}
\alpha^{\tilde{f}}(\tau_a^{\tilde{x}}{}_*X_a,\tau_b^{\tilde{x}}{}_*X_b) &= \alpha^f(\tau_a^{(\bar{x}_0,\tilde{x})}{}_*X_a,\tau_b^{(\bar{x}_0,\tilde{x})}{}_*X_b)  + f_*\alpha^\mu_{\bar{x}_0}(\tau_a^{\tilde{x}}{}_*X_a,\tau_b^{\tilde{x}}{}_*X_b)\\
&=  f_*\alpha^\mu_{\bar{x}_0}(\tau_a^{\tilde{x}}{}_*X_a,\tau_b^{\tilde{x}}{}_*X_b),	
\end{align*}
since the second fundamental form of $f$ is adapted to the product net.  
Because $E_b^\perp$ is totally umbilic,
$$\left<\nabla_{\tau_a^{\tilde{x}}{}_*X_a} \tau_b^{\tilde{x}}{}_*X_b, \tau_0^{\tilde{x}}{}_*X_0\right> 
= - \left< \tau_b^{\tilde{x}}{}_*X_b, \nabla_{\tau_a^{\tilde{x}}{}_*X_a} \tau_0^{\tilde{x}}{}_*X_0\right> = 0. $$
Thus $\alpha^\mu_{\bar{x}_0}(\tau_a^{\tilde{x}}{}_*X_a,\tau_b^{\tilde{x}}{}_*X_b) = 0$, and our claim follows. 

    To complete the proof, it remains to show that $\tilde{f}\colon \prod_{a=1}^r M_i\to \R^m$ is given as in the statement. But this follows from Theorem 5 in\cite{t2},
where conformal immersions of a Riemannian product whose second fundamental forms are adapted to the product net of the manifold have been classified.
\end{proof}

\begin{corollary}\label{cor:decomp}
Let $f\colon M^m = \prod_{i=0}^r M_i \to \R^m$ be a conformal local diffeomorphism with conformal 
factor $\lambda\in C^{\infty}(M)$ of a product manifold $M$ 
endowed with a polar metric. If $n_0=\text{dim}\,M_0=1$, suppose further that $\text{Hess}\,\lambda$ is 
adapted to the net $(E_0, E_0^\perp)$. Then $f$ is a Ribaucour partial tube over 
an immersion $\tilde{f}\colon \tilde{M} = \prod_{a=1}^r M_a \to \R^m$ given as in Theorem \ref{thm:decomp},
with each of the immersions $f_a$, $1\leq a\leq r$, having flat normal bundle and
 $f_0\colon M_0^{m_0}\to \mathbb{R}^{m_0}$ being a local isometry.   
\end{corollary}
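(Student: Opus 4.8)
The plan is to deduce the statement from Theorem~\ref{thm:decomp} together with Corollary~\ref{rptdiffeo}, the only new input being that $f$ is now a \emph{local diffeomorphism} rather than a general conformal immersion. First I would observe that, since $f\colon M^m\to\R^m$ has codimension zero, its normal bundle is trivial and its second fundamental form vanishes identically, so it is vacuously adapted to the product net $\mathcal{E}=(E_i)_{i=0,\ldots,r}$. Thus all hypotheses of Theorem~\ref{thm:decomp} are satisfied (the extra assumption on $\text{Hess}\,\lambda$ when $\dim M_0=1$ being carried over verbatim), and that theorem already yields that $f$ is a Ribaucour partial tube over $\tilde f=f\circ\mu_{\bar{x}_0}\colon\tilde M=\prod_{a=1}^r M_a\to\R^m$, with $\tilde f$ described by one of the two alternatives (1)--(2) of its statement. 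It then remains only to extract the sharper conclusions that $f_0$ is a local isometry and that each factor immersion has flat normal bundle.

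To obtain the statements about $f_0$ and $\tilde f$ I would regroup $M=M_0\times\tilde M$ as a two-factor product and invoke Corollary~\ref{rptdiffeo}. Its hypotheses are readily checked: the net $(E_0,\bigoplus_{a\geq 1}E_a)$ is orthogonal, being a coarsening of the orthogonal product net, and $E_0$ is spherical with respect to the metric induced by $f$. The latter is precisely what is established in the $r=1$ step of the proof of Theorem~\ref{thm:decomp}, namely that $E_0$ is umbilical because the induced metric is conformal to a polar one (Proposition~\ref{p:polarmetricumblicialnet}), and is in fact spherical by Lemma~12 of~\cite{t2} when $\dim M_0\geq 2$, or by the hypothesis that $\text{Hess}\,\lambda$ is adapted to $(E_0,E_0^\perp)$ together with Proposition~\ref{prop:hess} when $\dim M_0=1$. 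Corollary~\ref{rptdiffeo} then gives that $\tilde f$ has flat normal bundle, that $\Psi\colon\tilde M\times\R^{m_0}\to N_{\tilde f}\tilde M$ is a parallel bundle isometry (so that the fiber dimension equals $m_0=\dim M_0$), and that the fiber $f_0\colon M_0^{m_0}\to\R^{m_0}$ is a local isometry.

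Finally I would transfer the flatness of $N_{\tilde f}\tilde M$ to the individual factors $f_a$. Flatness of the normal bundle is a conformal invariant, so it is preserved when one peels off the conformal transformation $\tau$ in case~(1), or the conformal diffeomorphism $\Theta$ in case~(2), relating $\tilde f$ to the extrinsic product $\bar f$, respectively $f_1\times\bar f$. For an extrinsic product of isometric immersions into a product of space forms, the induced connection, the second fundamental form, and hence (by the Ricci equation) the normal connection and its curvature all split as direct sums over the factors; consequently the product has flat normal bundle if and only if each factor does. Therefore every $f_a$, $1\leq a\leq r$, has flat normal bundle, which completes the argument. I expect this last transfer step --- verifying the conformal invariance of normal flatness and its clean splitting across an extrinsic product --- to be the only point requiring genuine care, the remainder being a matter of correctly assembling the already available Theorem~\ref{thm:decomp} and Corollary~\ref{rptdiffeo}.
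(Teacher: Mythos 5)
Your proposal is correct and is essentially the argument the paper intends (the corollary is stated there without proof): in codimension zero the second fundamental form is vacuously adapted, so Theorem~\ref{thm:decomp} applies verbatim, and the sharper conclusions come from the two-factor regrouping and Corollary~\ref{rptdiffeo}. Your final transfer step --- conformal invariance of normal-bundle flatness and its splitting across extrinsic products --- is the right (and standard) way to pass from $\tilde f$ to the individual factors $f_a$.
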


{\renewcommand{\baselinestretch}{1}
\hspace*{-30ex}\begin{tabbing}
\indent \= Universidad de Murcia \hspace{18.2ex} 
           Universidade de S\~ao Paulo \\
\>         Departamento de Matematicas \hspace{10ex}
           Av. Trabalhador S\~ao-Carlense 400 \\
\>         E-30100 Espinardo \hspace{22.2ex} 
           13560-970 --- S\~ao Carlos  \\
\>         Murcia, Spain\hspace{28ex} 
           Brazil\\
\>         sjchiona@gmail.com  \hspace{21.3ex}
           tojeiro@icmc.usp.br
\end{tabbing}}
\end{document}